\newtheorem{lemma}{Lemma}
\newtheorem{cor}{Corollary}
\newtheorem{thm}{Theorem}
\newtheorem{prop}{Proposition}
\newtheorem{rmk}{Remark}\theoremstyle{remark}
\newcommand{\CC}{\mathbb C}
\newcommand{\QQ}{\mathbb Q}
\newcommand{\Fp}{{\mathbb F}_p}
\newcommand{\Ql}{\bar{\mathbb Q}_\ell}
\newcommand{\FF}{\mathcal F}
\newcommand{\GGG}{\mathcal G}
\newcommand{\LL}{\mathcal L}
\newcommand{\R}{\mathrm R}
\newcommand{\AAA}{\mathbb A}
\newcommand{\Gm}{{\mathbb G}_m}
\newcommand{\Gmk}{\mathbb G_{m,k}}
\newcommand{\ZZ}{\mathbb Z}
\newcommand{\triv}{\mathbf 1}
\title{Finite monodromy of some two-parameter families of exponential sums}
\author{Francisco García-Cortés}
\address{Dpto. de \'Algebra, Fac. de Matem\'aticas \\
  Universidad de Sevilla \\
  c/Tarfia, s/n \\
  41012 Sevilla, Spain \\
  {\tt fragarcor3@alum.us.es}}
\author{Antonio Rojas-Le\'on}
\address{Dpto. de \'Algebra, Fac. de Matem\'aticas \\
  Universidad de Sevilla \\
  c/Tarfia, s/n \\
  41012 Sevilla, Spain \\
  {\tt arojas@us.es}}
\begin{document}

\begin{abstract}
We determine the set of polynomials $f(x)\in k[x]$, where $k$ is a finite field, such that the local system on $\mathbb G_m^2$ which parametrizes the family of exponential sums $(s,t)\mapsto\sum_{x\in k}\psi(sf(x)+tx)$ has finite monodromy, in two cases: when $f(x)=x^d+\lambda x^e$ is a binomial and when $f(x)=(x-\alpha)^d(x-\beta)^e$ is of Belyi type.
\end{abstract}

\maketitle

\renewcommand{\thefootnote}{}
\footnote{Mathematics Subject Classification: 11L05, 11T23}
\footnote{The first author is partially supported by grants PID2020-117843GB-I00 and PID2020-114613GB-I00 funded by MICIU/AEI/10.13039/501100011033}
\footnote{The second author is partially supported by grant PID2020-114613GB-I00 funded by MICIU/AEI/10.13039/501100011033}


\section{Introduction}

Let $k$ be a finite field of characteristic $p>0$ and cardinality $q=p^a$. We fix a prime $\ell\neq p$ and, given an algebraic variety $X$ defined over $k$, we will work in the category $\mathcal{S}h(X,\Ql)$ of constructible $\ell$-adic sheaves on $X$. Fix an algebraic closure $\bar k$ of $k$ and denote by $k_r$ de degree $r$ extension of $k$ in $\bar k$. Let $\psi:k\to\CC$ be the natural additive character $\psi(t)=\exp\left(\frac{2\pi i\mathrm{Tr}_{k/\Fp}t}{p}\right)$ and, for every $r\geq 1$, denote by $\psi_r$ its pull-back to $k_r$ obtained by composing $\psi$ with the trace map $k_r\to k$.

Let $f(x)=\sum_{i=0}^d a_ix^i \in k[x]$ be a polynomial such that there is some non-zero $i$ which is not a power of $p$ with $a_i\neq 0$. Then there exists a lisse sheaf $\FF_f$ on $U_f\times\AAA^1_k$, pure of weight 0, (where $U_f$ is a certain dense open subset of $\Gmk$) whose Frobenius trace function at a rational point $(s,t)\in U_f(k_r)\times k_r$ is given by
$$
F(k_r;s,t)=-\frac{1}{q^{r/2}}\sum_{x\in k_r}\psi_r(sf(x)+tx).
$$

Studying the monodromy group of these sheaves is interesting for its applications to coding theory and also as they give interesting examples of lisse sheaves whose monodromy group is a sporadic simple finite group \cite{KRL19}. In \cite{RL19} it was shown that, if $f(x)$ is a monomial, then the monodromy is either finite, the full symplectic group $\mathrm{Sp}_{d-1}$ (for $d$ odd) or the special linear group $\mathrm{SL}_{d-1}$ (for $d$ even) (more generally, see \cite[Theorem 10.2.4]{KT24}). In characteristic 2, the latter implies that the function $f(x)$ is not APN-exceptional \cite{AMR10}. Determining the non-finiteness of the monodromy for more general $f(x)$ can then provide additional results in this direction.

The question of which monomials $f(x)=x^d$ produce sheaves with finite monodromy has been recently fully solved by N. Katz and P.H. Tiep in \cite[Theorem 10.2.6]{KT24}:

\begin{thm}[Katz, Tiep]\label{KT1}
 Let $f(x)=x^d$, and $d_{\hat p}$ the prime-to-$p$ part of $d$. Then the sheaf $\FF_{x^d}$ has finite (geometric or arithmetic) monodromy group on $\Gmk^2$ if and only if
 \begin{enumerate}
  \item $d_{\hat p}=p^a+1$ for some $a\geq 0$ ($a>0$ if $p=2$).
  \item $p>2$ and $d_{\hat p}=\frac{p^a+1}{2}$ for some $a>0$.
  \item $d_{\hat p}=\frac{p^{ab}+1}{p^a+1}$ for some $a,b>0$ with $b$ odd (including $d_{\hat p}=1$).
  \item $p=5$ and $d_{\hat p}=7$. \end{enumerate}
\end{thm}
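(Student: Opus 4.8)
The plan is to reduce the two-parameter problem to a single one-parameter (hypergeometric) sheaf and then to decide finiteness via the known dichotomy together with an explicit analysis of the four candidate families. First I would reduce to the case where $d$ is prime to $p$. Since raising to the $p$-th power is a bijection of $k_r$ and the trace is Frobenius-invariant, for any $e$ one has $\sum_x\psi_r(sx^{pe}+tx)=\sum_x\psi_r(s^{1/p}x^e+tx)$; because the $p$-power map is a universal homeomorphism, pulling back along it is an equivalence on the \'etale site and preserves monodromy, so $\FF_{x^{pe}}$ and $\FF_{x^e}$ have the same geometric monodromy. Iterating, I may assume $d=d_{\hat p}$. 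Next I would exploit the scaling symmetry: the substitution $x\mapsto\mu x$ gives $\sum_x\psi_r(sx^d+tx)=\sum_x\psi_r(s\mu^dx^d+t\mu x)$, so the trace function is constant along the orbits of the free action $\mu\cdot(s,t)=(s\mu^d,t\mu)$ of $\Gm$ on $\Gm^2$, whose quotient is the $\Gm$ with coordinate $w=s/t^d$. As the projection $\Gm^2\to\Gm$ is a Zariski-locally trivial $\Gm$-bundle it induces a surjection on $\pi_1$, so $\FF_{x^d}$ descends (geometrically) to a rank $d-1$ sheaf $\GGG$ on $\Gm$ with the same monodromy group. Restricting to the slice $\{s=1\}$, which meets every orbit, identifies the pullback of $\GGG$ along a $d$-th power map with $\mathrm{FT}_\psi([d]^*\LL_\psi)$, where $[d]\colon\AAA^1\to\AAA^1$ is $x\mapsto x^d$; since passage to a finite-index subgroup does not affect finiteness, finite monodromy of $\FF_{x^d}$ on $\Gm^2$ is equivalent to finite monodromy of $\mathrm{FT}_\psi(\LL_{\psi(x^d)})$ on $\Gm$.

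This Fourier transform is lisse of rank $d-1$, tame at $0$ with local monodromy given by the nontrivial characters $\chi$ satisfying $\chi^d=\triv$, and totally wild at $\infty$ with Swan conductor $1$; it is a hypergeometric sheaf in the sense of Katz. By the result of \cite{RL19} (see also \cite[Theorem 10.2.4]{KT24}), its geometric monodromy is either finite or the full classical group $\mathrm{Sp}_{d-1}$ (for $d$ odd) or $\mathrm{SL}_{d-1}$ (for $d$ even). The theorem therefore reduces to deciding, among all $d$ prime to $p$, exactly which ones fall into the \emph{finite} alternative.

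For the ``if'' direction I would exhibit finiteness in each family by linearizing the exponent. When $d=p^a+1$ the map $x\mapsto x^{p^a+1}=x\cdot x^{p^a}$ is a quadratic form over $\Fp$, since $x\mapsto x^{p^a}$ is additive; completing the square turns $\sum_x\psi_r(sx^d+tx)$ into a quadratic Gauss sum, whose normalization is a root of unity by Weil's evaluation, forcing finite monodromy. The case $d_{\hat p}=(p^a+1)/2$ (for $p>2$) reduces to the Gold case through the squaring map, and the Kasami--Niho type exponent $d_{\hat p}=(p^{ab}+1)/(p^a+1)$ reduces, via a norm substitution relative to $\mathbb F_{p^a}$, to a quadratic-form sum over that subfield. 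The isolated pair $p=5,\ d_{\hat p}=7$ is treated separately by identifying the (necessarily finite) monodromy group explicitly as a specific finite primitive group.

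The ``only if'' direction is where I expect the main obstacle. Given the dichotomy, for every $d$ outside the four families I must show that the full classical monodromy occurs. I would do this through moment computations: the second and fourth moments of the normalized trace function are explicit point counts for the systems $\sum_i x_i=0,\ \sum_i x_i^d=0$, and these counts take their generic classical-group values precisely when $x\mapsto x^d$ lacks the extra additive relations characteristic of the Gold and Kasami exponents. The heart of the matter is to evaluate these counts for all remaining $d$ and to exclude every sporadic coincidence; this is a delicate piece of $p$-adic analysis, via Stickelberger's congruences for Gauss-sum valuations and base-$p$ digit sums. The survival of the exception at $p=5,\ d_{\hat p}=7$ indicates that no purely uniform argument can close the classification, so an unavoidable finite exceptional computation must be carried out at the end.
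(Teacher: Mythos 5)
First, a point of comparison: the paper does not prove this statement at all. It is imported verbatim from Katz--Tiep \cite[Theorem 10.2.6]{KT24}, and the only content the paper adds around it are the reduction remarks that your proposal reproduces correctly: the isomorphism $\LL_{\psi(f)}\cong\LL_{\psi(f^p)}$ to reduce to $d$ prime to $p$, the equivalence between the two-parameter system on $\Gmk^2$ and the one-parameter family $t\mapsto -q^{-r/2}\sum_x\psi_r(x^d+tx)$ (the paper cites \cite[Lemmas 2.5, 2.6]{KRLT20}, you rederive it via the $\Gm$-action $(s,t)\mapsto(s\mu^d,t\mu)$, which is essentially the same computation), the identification of the latter with the Fourier transform of $\LL_{\psi(x^d)}$, and the finite-or-classical dichotomy of \cite{RL19}. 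Up to that point your proposal is sound and matches the paper's framing.

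The genuine gap is everything after that, which is the actual content of the theorem, and your strategy for the ``only if'' direction cannot work as described. Given the dichotomy, proving non-finiteness for $d$ outside the list means \emph{excluding} the finite alternative, and moment computations are structurally incapable of doing this: a finite subgroup of the classical group can have exactly the same low moments as the full group, so finding the ``generic classical-group values'' for the second and fourth moments proves nothing. The surviving exception $p=5$, $d_{\hat p}=7$ is precisely such a case: its monodromy is the finite group $2.J_2\subset\mathrm{Sp}_6$ (this is the sporadic example of \cite{KRL19}), and since its $6$-dimensional representation satisfies $V\otimes V=\mathbf{1}\oplus(14)\oplus(21)$, its fourth moment is $3$, equal to that of $\mathrm{Sp}_6$; no fourth-moment count can separate the two. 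The tool that actually decides finiteness --- used by Katz--Tiep throughout \cite{KT24} and by this paper for its own results (Proposition \ref{criterion-algint} and Theorem \ref{Belyicrit}) --- is the integrality criterion of \cite[Proposition 2.1]{KRLT20}: finite monodromy holds if and only if all normalized sums are algebraic integers, which via Stickelberger becomes a system of Kubert $V$-function inequalities, and non-finiteness is established by exhibiting explicit characters (base-$p$ digit patterns) violating them, for every $d$ outside the list. That case analysis is the bulk of Katz--Tiep's proof and is entirely absent from your sketch. On the ``if'' side, your arguments for $d_{\hat p}=p^a+1$ and $(p^a+1)/2$ are essentially correct (quadratic Gauss sums), but the Kasami family $\frac{p^{ab}+1}{p^a+1}$ and the $2.J_2$ case are only gestured at; the known proofs go through Weil representations of finite unitary groups and a genuinely nontrivial group identification, not a ``norm substitution.'' In short: you have a correct reduction to the known dichotomy plus a restatement of the hard problem, where the paper simply cites \cite{KT24} rather than reproving it.
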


Katz and Tiep's theorem deals with the $d$ prime to $p$ case, from which one can easily deduce the general case using the fact that for any $f:X\to\AAA^1$ the sheaf $\LL_{\psi(f)}$, pull-back by $f$ of the Artin-Schreier sheaf $\LL_\psi$ on $\AAA^1$, is canonically isomorphic to $\LL_{\psi(f^p)}$.

We will say that a positive integer $d$ is an \emph{FM-exponent} for the prime $p$ if the local system $\FF_{x^d}$ has finite monodromy, that is, if it falls in one of the cases of the previous theorem. Note that, by by \cite[Lemma 2.5,2.6]{KRLT20}, this is equivalent to the one-parameter family with trace function
$$
(k_r;t)\mapsto -\frac{1}{q^{r/2}}\sum_{x\in k_r}\psi_r(x^d+tx).
$$
having finite monodromy. The equivalence of the arithmetic and geometric monodromy groups being finite follows from \cite[Proposition 2.1]{KRLT20}.

In this article we study two new families of polynomials for which we can find numerical criterions for finiteness of the monodromy, based on Kubert's $V$ function, and determine for which of such $f$ the local system $\FF_f$ has finite monodromy. Recall that $V$ (which we will denote by $V_p$ to make it clear that it depends on $p$) is a function with rational values in $[0,1)$ defined in $(\QQ/\ZZ)_{\text{not }p}:=\{\frac{a}{b}\in\QQ/\ZZ~|~p \text{ does not divide }b\}$ as
$$
V_p\left(\frac{a}{p^r-1}\right)=\frac{1}{r(p-1)}\cdot(\text{sum of the }p\text{-adic digits of }a)
$$
if $0\leq a\leq p^r-2$. This is well defined, since every class in $(\QQ/\ZZ)_{\text{not }p}$ has a representative in such form, and the computed value of $V_p$ does not depend on the chosen representative.

The Stickelberger theorem can be stated in terms of the $V$ function: if $\epsilon:k_r^\times\to\CC^\times$ is the Teichmüller character (which generates the cyclic group of multiplicative characters of $k_r^\times$) then the $q^r$-adic valuation of the Gauss sum
$$
G(\epsilon^a):=-\sum_{t\in k_r}\epsilon^a(t)\psi_r(t)
$$
for $0\leq a\leq q^r-2$ is given by $V_p\left(\frac{a}{q^r-1}\right)$. See \cite[Section 13]{K07} for details.

We first treat the case where $f(x)=x^d+\lambda x^e$ is a binomial, with $d>e>1$. In this case, we will see that the monodromy of $\FF_f$ is finite if and only if the monodromy of the sheaf on $\AAA^2_k$ whose Frobenius trace function is the two-parameter function
$$
(k_r;s,t)\mapsto\sum_{x\in k_r}\psi_r(x^d+sx^e+tx)
$$
is finite. This is one particular case of the multi-parameter families studied in \cite{KRLT20,KT24}, for which there are numerical criteria available. In \cite[Theorem 11.2.3]{KT24}, Katz and Tiep have classified all such multi-parameter families with finite monodromy, which therefore solves the problem in this case.

Next, we will consider the case of polynomials of Belyi type, that is, of the form $f(x)=(x-\alpha)^d(x-\beta)^e$ with $\alpha\neq\beta$, giving a numerical criterion for the finiteness of the monodromy of the corresponding local system (which does not actually depend on $\alpha$ and $\beta$). We then use this criterion, combined with the results in \cite{KT24}, to determine the complete set of pairs of exponents $(d,e)$ for which the corresponding local system has finite monodromy.

\section{Two-parameter families of exponential sums}

In this section we will collect some basic facts about the sheaves we are interested in. We fix a polynomial $f(x)=\sum_{i=0}^d a_ix^i$, for which we will assume the following condition holds:

\begin{center}
($\ast$) \emph{There exists some non-zero $i$ which is not a power of $p$ such that $a_i\neq 0$.}
\end{center}

Recall that two polynomials in $k[x]$ are said to be \emph{Artin-Schreier equivalent} if their difference is of the form $h^p-h$ for some $h\in k[x]$. Since the polynomials of this form a subgroup of $k[x]$, this defines an equivalence relation on $k[x]$. We will say that $g(x)=\sum b_ix^i\in k[x]$ is an \emph{Artin-Schreier reduced form} of $f$ if it is Artin-Schreier equivalent to $f$ and $b_i=0$ for all $i>0$ divisible by $p$. It is clear that every $f$ has an Artin-Schreier reduced form, which is unique modulo addition of a constant term.

For a positive integer $n$, let $n_{\hat p}$ denote its prime-to-$p$ part. 

\begin{lemma}
    There exists a dense open set $U_f\subseteq\Gmk$ such that the degree of the Artin-Schreier reduced form of $sf(x)$ is constant for $s\in U_f$, and equal to the maximum $e(f)$ of the $i_{\hat p}$ for the $i=1,\ldots,d$ such that $a_i\neq 0$.
\end{lemma}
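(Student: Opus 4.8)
The plan is to make the Artin--Schreier reduction completely explicit at the level of monomials and then read off the answer. In characteristic $p$, for a single monomial $ax^i$ write $i=p^jm$ with $\gcd(m,p)=1$, so that $i_{\hat p}=m$. Since $(a^{1/p}x^m)^p-a^{1/p}x^m=ax^{pm}-a^{1/p}x^m$, one has $ax^{pm}\equiv a^{1/p}x^m$ modulo Artin--Schreier equivalence, and iterating $j$ times gives $ax^{i}\equiv a^{1/p^j}x^{m}$, the $p$-power roots being taken in $k$, where Frobenius is bijective. Adding these congruences over all monomials of $g=\sum_i c_i x^i$ yields
$$
g\equiv\sum_{\substack{m\geq 0\\ \gcd(m,p)=1}}\Bigl(\sum_{j\geq 0}c_{p^jm}^{1/p^j}\Bigr)x^{m}
$$
modulo Artin--Schreier equivalence; as the right-hand side has no positive-degree monomial whose exponent is divisible by $p$, the uniqueness of the reduced form (modulo constants) identifies it with the reduced form of $g$. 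In particular, the coefficient of $x^m$ in the reduced form is $\sum_{j\geq 0}c_{p^jm}^{1/p^j}$.

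First I would record the trivial upper bound. For $s\neq 0$ the polynomial $sf$ has the same support as $f$, so by the computation above every positive-degree monomial surviving in the reduced form of $sf$ has degree $i_{\hat p}$ for some $i$ with $a_i\neq 0$; hence that reduced form has degree at most $e(f)=\max\{i_{\hat p}:a_i\neq 0\}$ for \emph{every} $s\in\Gmk$. It then remains to show that the coefficient of $x^{e(f)}$ is nonzero for $s$ in a dense open set. Since $e(f)$ is coprime to $p$, the exponents $i$ with $i_{\hat p}=e(f)$ are exactly the $i=p^je(f)$ with $p^je(f)\leq d$, so this coefficient is
$$
c(s)=\sum_{j\geq 0}\bigl(s\,a_{p^je(f)}\bigr)^{1/p^j},
$$
a finite sum in which at least one summand is genuinely present by the very definition of $e(f)$.

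The final step is to see that $c(s)$ vanishes only on a proper closed subset defined over $k$. Let $J$ be the largest $j$ with $a_{p^je(f)}\neq 0$, and raise to the $p^J$-th power; by additivity of Frobenius this gives
$$
P(s):=c(s)^{p^J}=\sum_{j\geq 0}a_{p^je(f)}^{\,p^{J-j}}\,s^{p^{J-j}}\in k[s].
$$
Because the exponents $p^{J-j}$ are pairwise distinct and the coefficient of $s$ equals $a_{p^Je(f)}\neq 0$, the polynomial $P$ is nonzero, so its zero set is finite; moreover raising to the $p^J$-th power is a bijection of $\bar k$, whence $\{c(s)=0\}=\{P(s)=0\}$. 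I then set $U_f:=\{s\in\Gmk:P(s)\neq 0\}$, a dense open subset of $\Gmk$ defined over $k$. On $U_f$ we have $c(s)\neq 0$, so the reduced form of $sf$ carries a nonzero $x^{e(f)}$-term and thus has degree exactly $e(f)$, meeting the upper bound; this establishes both the constancy and the stated value.

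The only genuinely delicate point is the inverse Frobenius (the $p$-power roots) appearing in $c(s)$, which is \emph{a priori} not a polynomial in $s$; the device of passing to $c(s)^{p^J}$ turns the vanishing condition into an honest polynomial equation over $k$ and, through the distinctness of the exponents $p^{J-j}$, simultaneously rules out any cancellation that could make $P$ identically zero. Everything else is bookkeeping with the explicit reduction formula of the first paragraph.
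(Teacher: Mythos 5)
Your proof is correct and follows essentially the same route as the paper: reduce each monomial $ax^{p^jm}$ to $a^{1/p^j}x^m$, identify the coefficient $\sum_{j\geq 0}(sa_{p^je(f)})^{1/p^j}$ of $x^{e(f)}$ in the reduced form of $sf$, and take $U_f$ to be its non-vanishing locus. The only difference is that you additionally justify that this locus is Zariski open by raising to the $p^J$-th power to obtain a genuine nonzero polynomial $P(s)$ with pairwise distinct exponents --- a point the paper's proof asserts without comment.
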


\begin{proof}
    Let $I$ be the set of $i_{\hat p}$ for the $i\in\{1,\ldots,d\}$ such that $a_i\neq 0$, then $sf(x)=sa_0
+\sum_{i\in I}\sum_{j\geq 0}sa_{ip^j}x^{ip^j}$ is Artin-Schreier equivalent to $g_s(x)=sa_0+\sum_{i\in I}(\sum_{j\geq 0}s^{p^{-j}}a_{ip^j}^{p^{-j}})x^i$, which is in Artin-Schreier reduced form. The degree of $g_s(x)$ is the maximum of the $i\in I$ such that $\sum_{j\geq 0}s^{p^{-j}}a_{ip^j}^{p^{-j}}\neq 0$, so if $e$ is the maximum element of $I$ we can take $U_f$ to be the Zariski open set of $\Gmk$ defined by $\sum_{j\geq 0}s^{p^{-j}}a_{ep^j}^{p^{-j}}\neq 0$.
\end{proof}

Note that, if the degree of $f$ is prime to $p$, we can take $U_f$ to be the entire $\Gmk$.

Consider the lisse, rank 1 sheaf $\LL_{\psi(sf(x)+tx)}$ on $\Gmk\times\AAA^1_k\times\AAA^1_k$, pull-back by the map $(s,t,x)\mapsto sf(x)+tx$ of the Artin-Schreier sheaf $\LL_\psi$ on $\AAA^1_k$, and let $\pi_{12}:\Gmk\times\AAA^1_k\times\AAA^1_k\to\Gmk\times\AAA^1_k$ be the projection onto the first two factors. The object $K_f:=\R\pi_{12!}(\LL_{\psi(sf(x)+tx)})$ on $\Gmk\times\AAA^1_k$ has Frobenius trace function
$$
(k_r;s,t)\mapsto\sum_{x\in k_r}\psi_r(sf(x)+tx)
$$

\begin{prop}
    The restriction of $K_f$ to the open set $U_f\times\AAA^1_k$ is of the form $\GGG_f[-1]$, where $\GGG_f$ is a geometrically irreducible lisse sheaf of rank $e(f)-1$, pure of weight $1$.
\end{prop}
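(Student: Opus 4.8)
The plan is to compute the restriction of $K_f$ fiber by fiber and then promote the fiberwise information to a statement about the complex on $U_f\times\AAA^1_k$. Since $\R\pi_{12!}$ commutes with arbitrary base change, the stalk of $K_f$ at a geometric point $(s,t)$ with $s\in U_f$ computes $\R\Gamma_c(\AAA^1_{\bar k},\LL_{\psi(sf(x)+tx)})$. By the Lemma, the Artin–Schreier reduced form of $sf(x)$ has degree $e(f)$ for $s\in U_f$; since condition $(\ast)$ forces $e(f)\geq 2$, adding the degree-one term $tx$ leaves the reduced degree equal to $e(f)$, which is prime to $p$. The first step is therefore the classical computation of the cohomology of an Artin–Schreier sheaf $\LL_{\psi(g)}$ on the affine line when $\deg g=n$ is prime to $p$ and $n\geq 2$: such a sheaf is totally wild at infinity with a single slope $n$, so it has no nonzero geometric invariants or coinvariants, whence $\HHH^0_c=\HHH^2_c=0$, while Grothendieck–Ogg–Shafarevich gives $\chi_c=1-\mathrm{Swan}_\infty=1-n$, so that $\HHH^1_c$ has dimension $n-1=e(f)-1$. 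Thus the stalks of $K_f$ are concentrated in cohomological degree $1$ with constant dimension $e(f)-1$.

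Next I would upgrade this to lisse-ness. As $\pi_{12}$ is an affine relative curve, $\R^i\pi_{12!}=0$ for $i\notin\{0,1,2\}$, and the fiberwise vanishing above shows $\R^0\pi_{12!}=\R^2\pi_{12!}=0$ on $U_f\times\AAA^1_k$; hence $K_f|_{U_f\times\AAA^1_k}=\GGG_f[-1]$ with $\GGG_f:=\R^1\pi_{12!}(\LL_{\psi(sf(x)+tx)})$ a constructible sheaf of generic rank $e(f)-1$. To see that $\GGG_f$ is in fact lisse I would use that the local monodromy of $\LL_{\psi(sf(x)+tx)}$ at $x=\infty$ has Swan conductor $e(f)$, constant as $(s,t)$ ranges over $U_f\times\AAA^1_k$; by the continuity criterion of Deligne and Laumon (constancy of the Euler characteristic together with constancy of the Swan conductors along the section at infinity), the formation of $\R^1\pi_{12!}$ has no jumps and yields a lisse sheaf. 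Purity then follows from total wildness at infinity: since $\LL_\psi$ is pure of weight $0$ and the sheaf has no nonzero tame part at $\infty$, the forget-supports map identifies $\HHH^1_c$ with $\HHH^1$ fiberwise, so $\GGG_f$ is a cohomology of middle-extension type, which Deligne's Weil~II makes pure of weight $0+1=1$.

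The main obstacle is geometric irreducibility, which I would establish fiber by fiber via the $\ell$-adic Fourier transform in the variable $t$. For fixed $s\in U_f$, the restriction $K_f|_{\{s\}\times\AAA^1_k}$ is precisely $\mathrm{FT}_\psi(\LL_{\psi(sf(x))})$ taken with respect to the duality between the $x$-line and the $t$-line. The object $\LL_{\psi(sf(x))}[1]$ is an irreducible perverse sheaf on $\AAA^1_x$ (a shifted lisse rank-one sheaf, not punctual), and by Laumon's theory $\mathrm{FT}_\psi$ is an exact autoequivalence of perverse sheaves preserving geometric irreducibility; hence $\mathrm{FT}_\psi(\LL_{\psi(sf(x))}[1])$, which is the appropriate shift of $\GGG_f|_{\{s\}\times\AAA^1_k}$ and is lisse on all of $\AAA^1_t$ since its rank $e(f)-1$ is constant, is a geometrically irreducible lisse sheaf. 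Since the restriction of $\GGG_f$ to every fiber $\{s\}\times\AAA^1_k$ over $U_f$ is geometrically irreducible, any proper nonzero geometric sub-local-system of $\GGG_f$ would restrict, via $\pi_1^{\mathrm{geom}}(\text{fiber})\to\pi_1^{\mathrm{geom}}(U_f\times\AAA^1_k)$, to a proper nonzero sub-local-system on such a fiber, a contradiction; therefore $\GGG_f$ is geometrically irreducible on $U_f\times\AAA^1_k$.

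I expect the Fourier-transform step to carry the essential content, the remaining inputs being standard facts about Artin–Schreier cohomology on the affine line, the Deligne–Laumon semicontinuity/continuity theory, and Deligne's Weil~II; the one point requiring care is checking that adding the linear term $tx$ never changes the top-degree behavior, which is exactly guaranteed by $e(f)\geq 2$ coming from hypothesis $(\ast)$.
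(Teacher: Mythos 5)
Your proof is correct, and its skeleton matches the paper's: compute the stalks by base change, reduce to the classical result on $\HHH^*_c(\AAA^1,\LL_{\psi(g)})$ for $\deg g=e(f)$ prime to $p$ (the paper simply cites \cite[3.5]{D77}, which already contains the concentration in degree $1$, the dimension count, and the weight-$1$ purity that you re-derive from total wildness, the forget-supports isomorphism and Weil II), and obtain geometric irreducibility by restricting to a line $\{s\}\times\AAA^1_k$ and using that Fourier transform preserves irreducibility (the paper fixes a single rational point $s_0\in U_f(k)$, after a finite extension, and cites \cite[Theorem 7.3.8]{K90} for the naive Fourier transform of the rank-one sheaf $\LL_{\psi(s_0f)}$; you argue on every geometric fiber via Laumon's perverse Fourier transform --- the same mechanism, proving more than is needed, and your descent from irreducibility on a fiber to irreducibility on $U_f\times\AAA^1_k$ is the same subgroup argument the paper leaves implicit). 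The one genuinely different ingredient is lisse-ness: the paper notes that since $\pi_{12}$ is affine, $\GGG_f=\R^1\pi_{12!}\LL_{\psi(sf(x)+tx)}$ is of perverse origin \cite[Corollary 6]{K03} and hence lisse wherever its stalk dimension is constant \cite[Proposition 12]{K03}, whereas you invoke Deligne--Laumon semicontinuity, using that the Swan conductor at $x=\infty$ is identically $e(f)$ on $U_f\times\AAA^1_k$. Both are valid; your route additionally yields compatibility of $\R^1\pi_{12!}$ with base change, while the paper's perverse-origin formalism is what it reuses later (through \cite[Corollary 10]{K03}) in Propositions \ref{criterion-algint} and \ref{hiloterms}, so it is the more economical citation in context. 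You were also right to isolate, and justify via condition $(\ast)$, the fact that $e(f)\geq 2$, so that adding the linear term $tx$ cannot change the Artin--Schreier reduced degree: that is exactly what makes the rank and the Swan conductor constant in $t$ as well as in $s$.
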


\begin{proof}
    The stalk of $K_f$ at a geometric point $(s,t)$ of $U_f$ is $\R\Gamma_c(\AAA^1_k,\LL_{\psi(sf(x)+tx)})$. Since $sf(x)$ is Artin-Schreier equivalent to $g_s(x)$, $\LL_{\psi(sf(x)+tx)}\cong\LL_{\psi(g_s(x)+tx)}$, and $\R\Gamma_c(\AAA^1_k,\LL_{\psi(g_s(x)+tx)})$ is known to be concentrated on degree 1, of dimension $e(f)-1$ and pure of weight 1, since $e(f)$ is prime to $p$ (see eg. \cite[3.5]{D77}).

    Since $\pi_{12}$ is affine, the sheaf $\GGG_f:=\mathcal H^1(K_f)=\R^1\pi_{12!}(\LL_{\psi(sf(x)+tx)})$ is of perverse origin by \cite[Corollary 6]{K03}. And then by \cite[Proposition 12]{K03}, it is lisse on the open set $U_f\times\AAA^1_k$, where the dimension of its stalks is constant.

    In order to show that $\GGG_f$ is geometrically irreducible, pick some $s_0\in U_f(k)$ (passing to a finite extension of $k$ if necessary), then it suffices to show that the restriction of $\GGG_f$ to the line $\{s_0\}\times\AAA^1_k$ is geometrically irreducible. But this is the (naive) Fourier transform of the rank 1 lisse sheaf $\LL_{\psi(s_0f)}\cong\LL_{\psi(g_{s_0})}$ on $\AAA^1_k$, so it is geometrically irreducible by \cite[Theorem 7.3.8]{K90}.
\end{proof}

If the polynomial $f(x)$ does not satisfy condition ($\ast$) (that is, if $f(x)=a_0+\sum_{i=0}^r a_{p^i}x^{p^i}$) then $sf(x)$ is Artin-Schreier equivalent to $sa_0+(\sum_{i=0}^r (sa_{p^i})^{p^{-i}})x$, and the corresponding exponential sum is zero on the Zariski open set defined by $t+\sum_{i=0}^r (sa_{p^i})^{p^{-i}}\neq 0$, so we take $\GGG_f=0$.

Pick a square root of $q$, then $\FF_f:=\GGG_f(1/2)=(1/\sqrt{q})^{deg}\otimes\GGG_f$ is pure of weight 0, and has Frobenius trace function
$$
(k_r;s,t)\mapsto -\frac{1}{q^{r/2}}\sum_{x\in k_r}\psi_r(sf(x)+tx).$$
The following result is simply \cite[Theorem 10.2.6]{KT24} when $d$ is prime to $p$, we generalize it here to $f$ of arbitrary degree.


\begin{prop}\label{criterion-algint}
The lisse sheaf $\FF_f$ on $U_f\times\AAA^1$ has finite (arithmetic or geometric) monodromy group if and only if
$$
S(k_r;s,t):=\frac{1}{q^{r/2}}\sum_{x\in k_r}\psi_r(sf(x)+tx)
$$
is an algebraic integer for every $r\geq 1$ and $(s,t)\in k_r^\times\times k_r^\times$.
\end{prop}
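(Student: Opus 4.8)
The plan is to separate the statement into the general ``weight-zero integrality'' mechanism and the bookkeeping needed to pass from the lisse locus $U_f\times\AAA^1$ to the full torus $k_r^\times\times k_r^\times$. The core is the standard principle for a lisse sheaf pure of weight $0$: its arithmetic (equivalently, by \cite[Proposition 2.1]{KRLT20}, geometric) monodromy group is finite if and only if every Frobenius eigenvalue, at every point over every $k_r$, is a root of unity. Since purity forces $|\iota(\alpha)|=1$ for every embedding $\iota:\Ql\hookrightarrow\CC$, Kronecker's theorem reduces ``root of unity'' to ``algebraic integer''. Thus the task is to show that, at the points where $\FF_f$ is lisse, the trace $-S(k_r;s,t)=\mathrm{tr}(\mathrm{Frob}_{(s,t)}\mid\FF_f)$ being an algebraic integer for all $r$ is equivalent to all the eigenvalues being algebraic integers.

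First I would record that the eigenvalues $\alpha$ of $\mathrm{Frob}_{(s,t)}$ on $\FF_f=\GGG_f(1/2)$ are automatically $\lambda$-integral at every finite place $\lambda\nmid p$: writing $\alpha=\beta/q^{r/2}$ with $\beta$ an eigenvalue on the weight-$1$ stalk $H^1_c(\AAA^1,\LL_{\psi(g_s+tx)})$, the number $\beta$ is an algebraic integer (compact-support cohomology of the integral sheaf $\LL_\psi$), while $q^{r/2}=p^{ar/2}$ is a unit at every $\lambda\nmid p$. Hence the only possible failure of integrality is at the places above $p$, and for both the eigenvalues and the trace ``algebraic integer'' means ``$\lambda$-integral for every $\lambda\mid p$''. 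Now fix a lisse point $(s,t)$ and set $p_n=\mathrm{tr}(\mathrm{Frob}_{(s,t)}^n\mid\FF_f)=-S(k_{rn};s,t)$; the hypothesis gives that every $p_n$ is $\lambda$-integral, and a Fatou-type lemma (a power series in $\mathcal O_\lambda[[T]]$ representing a rational function has its reduced denominator in $\mathcal O_\lambda[T]$) applied to $\sum_{n\ge 1}p_n T^n=-T\frac{d}{dT}\log\det(1-T\,\mathrm{Frob}_{(s,t)}\mid\FF_f)$ forces $\det(1-T\,\mathrm{Frob}_{(s,t)}\mid\FF_f)\in\mathcal O_\lambda[T]$, i.e.\ every $\alpha$ is $\lambda$-integral. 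This is exactly \cite[Theorem 10.2.6]{KT24} in the prime-to-$p$ case, and the argument above is its verbatim extension; it yields the equivalence between finiteness of the monodromy of $\FF_f$ and integrality of $S(k_r;s,t)$ for all $(s,t)\in U_f(k_r)\times k_r$.

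Next I would reconcile the domains. The condition of the proposition omits the line $t=0$ and includes the values $s\in(\Gm\setminus U_f)(k_r)$. Dropping $t=0$ is harmless: $U_f\times\Gm$ is dense open in the smooth connected $U_f\times\AAA^1$, so the open immersion induces a surjection on fundamental groups and the arithmetic and geometric monodromy groups of $\FF_f$ and of $\FF_f|_{U_f\times\Gm}$ coincide; finiteness is therefore already detected on $t\neq 0$. For a degenerate value $s_0\in(\Gm\setminus U_f)(k_r)$ I would use the Artin--Schreier reduction recorded above, valid over the perfect field $k_r$, to write $S(k_r;s_0,t)=q^{-r/2}\sum_{x}\psi_r(g_{s_0}(x)+tx)$ with $\deg g_{s_0}=e'<e(f)$ prime to $p$; since $\R\pi_{12!}$ commutes with base change, $i_{s_0}^*\GGG_f$ is exactly the one-variable Fourier sheaf $\R^1(\mathrm{pr}_t)_!\LL_{\psi(g_{s_0}(x)+tx)}$, lisse and pure of weight $1$ on a dense open of $\AAA^1_t$. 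Hence the degenerate sums are again weight-$0$ traces, and the same Fatou--Kronecker mechanism shows that they are algebraic integers for all $t\in k_r^\times$ if and only if this degenerate Fourier sheaf has finite monodromy. The ``if'' direction of the proposition uses only the $U_f$-part of the hypothesis and is now complete.

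The main obstacle is the remaining implication at the degenerate values: finiteness of the monodromy of $\FF_f$ must be shown to force each $i_{s_0}^*\GGG_f$ to have finite monodromy as well, equivalently to force $S(k_r;s_0,t)$ to be integral. Here I would exploit that $\Gm\setminus U_f$ is finite and that along each bad vertical divisor $\{s_0\}\times\AAA^1$ the local monodromy of $\FF_f$ lies in the finite arithmetic monodromy group, so that $\FF_f$ becomes lisse with finite monodromy after a finite base change in $s$ near $s_0$; I would then compare the honest special fibre $i_{s_0}^*\GGG_f$ with the nearby-cycles sheaf through the vanishing-cycles triangle, concluding that its Frobenius eigenvalues are, after the Tate twist, among the roots of unity produced by the generic finite monodromy. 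Controlling in this way the weight-$0$ eigenvalues of the rank-dropping special fibre by those of the generic fibre is the delicate step; everything else is the weight-$0$ integrality engine together with the elementary $\pi_1$-surjectivity and the Artin--Schreier bookkeeping.
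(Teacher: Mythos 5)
Your overall skeleton matches the paper's: reduce from $U_f\times\AAA^1$ to $U_f\times\Gm$ by $\pi_1$-surjectivity, invoke the integrality criterion of \cite[Proposition 2.1]{KRLT20} on the lisse locus (your Fatou--Kronecker paragraph is essentially a re-derivation of that cited result), and then treat separately the degenerate lines $\{s_0\}\times\Gm$ for $s_0\in Z_f=\Gmk\setminus U_f$. The ``if'' direction and this bookkeeping are fine, up to one small omission: when the reduced polynomial $g_{s_0}(x)+tx$ has degree $\leq 1$ there is no Fourier sheaf of positive rank to speak of, and one simply computes the sum directly to be $0$ or $q^{r/2}$ times a root of unity, as the paper does.

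The genuine gap is in your last paragraph, which is exactly the step the hard direction turns on. You want to bound the Frobenius eigenvalues of the honest special fibre $i_{s_0}^*\GGG_f$ by those of the nearby cycles along $s\to s_0$ via the vanishing-cycles triangle, but the resulting long exact sequence only gives this if the specialization map from $i_{s_0}^*\GGG_f$ to the degree-zero nearby cycles is \emph{injective}, i.e.\ if $\GGG_f$ has no nonzero sections supported on the bad line. For a general constructible sheaf this is false (any skyscraper-type subsheaf sitting on $\{s_0\}\times\AAA^1$ contributes eigenvalues invisible to the generic fibre), and nothing in your sketch rules it out; asserting the conclusion is precisely assuming what must be proved. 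This is exactly what the notion of a sheaf ``of perverse origin'' supplies: $\GGG_f=\R^1\pi_{12!}\LL_{\psi(sf(x)+tx)}$ with $\pi_{12}$ affine is of perverse origin by \cite[Corollary 6]{K03}, such sheaves admit no punctual-type subsheaves, which is why they are lisse wherever their rank is constant \cite[Proposition 12]{K03} and why the monodromy of their lisse restrictions to closed subvarieties is contained, up to conjugacy, in the generic monodromy \cite[Corollary 10]{K03}. The paper's proof of the hard direction is a short application of this machinery: on a bad line with $\deg g_{s_0}=c\geq 2$, the restriction of $\GGG_f$ has constant rank $c-1$, hence is lisse with finite monodromy by the two results just quoted, and then \cite[Proposition 2.1]{KRLT20} applied on that line yields integrality of $S(k_r;s_0,t)$. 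Your strategy can be completed, but only by proving or citing this perverse-origin injectivity; the ``delicate step'' you flag is not a deferrable refinement but the missing proof.
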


\begin{proof}
Since the inclusion $U_f\times\Gm\hookrightarrow U_f\times\AAA^1$ is surjective on the fundamental groups, $\FF_f$ has finite monodromy if and only if its restriction to $U_f\times\Gm$ does. By \cite[Proposition 2.1]{KRLT20}, $\FF_f$ has finite monodromy on $U_f\times\Gm$ if and only if $S(k_r;s,t)$ is an algebraic integer for every $r\geq 1$ and every $(s,t)\in U_f(k_r)\times k_r^\times$ which, in particular, is true if it is an algebraic integer for every $r\geq 1$ and $(s,t)\in k_r^\times\times k_r^\times$.

Suppose now that $\FF_f$ has finite monodromy. We need to show that $S(k_r;s,t)$ is an algebraic integer if $(s,t)\in Z_f(k_r)\times k_r^\times$, where $Z_f$ is the complement of $U_f$. Let $s\in Z_f(k_r)$, and $g_s(x)$ an Artin-Schreier reduced form of $sf(x)$. If $g_s(x)+tx$ is constant, then $S(k_r;s,t)=q^{r/2}$ times a root of unity, which is an algebraic integer. If $g_s(x)+tx$ is linear non-constant, then $S(k_r;s,t)=0$. Assume $g_s(x)$ has degree $c\geq 2$, then $\FF_f$ restricted to $\{s\}\times\Gm$ is of perverse origin and constant rank $c-1$, so it is lisse by \cite[Proposition 12]{K03} and has finite monodromy by \cite[Corollary 10]{K03}. Its trace function at $(s,t)$ is $-S(k_r;s,t)$, so by \cite[Proposition 2.1]{KRLT20} we conclude that $S(k_r;s,t)$ is an algebraic integer.
\end{proof}

We now prove a general fact about two-paremeter families with finite monodromy which will be useful later.

\begin{prop}\label{hiloterms}
 Let $f(x)=x^d+a_{d-1}x^{d-1}+\cdots+a_ex^e\in\Fp[x]$ be a monic polynomial of degree $d$ with $a_e\neq 0$ and no constant term (that is, $e>0$) such that $\FF_f$ has finite monodromy. Then $d$ and $e$ are FM-exponents.
\end{prop}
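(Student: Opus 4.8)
The plan is to reduce everything to the algebraic integer criterion of Proposition \ref{criterion-algint}. Being an FM-exponent for $d$ (resp. for $e$) is equivalent, by the definition together with \cite[Proposition 2.1]{KRLT20}, to the values $\frac{1}{q^{r/2}}\sum_{x\in k_r}\psi_r(x^{d}+tx)$ (resp. with $x^e$) being algebraic integers for all $r\ge 1$ and $t\in k_r^\times$. So it suffices to deduce these two integrality statements from the hypothesis that $\FF_f$ has finite monodromy, which by Proposition \ref{criterion-algint} means that $S(k_r;s,t)=\frac{1}{q^{r/2}}\sum_{x\in k_r}\psi_r(sf(x)+tx)$ is an algebraic integer for every $r$ and every $(s,t)\in k_r^\times\times k_r^\times$. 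Using $\LL_{\psi(h)}\cong\LL_{\psi(h^p)}$ to replace each exponent by its prime-to-$p$ part, I may assume $d$ and $e$ are prime to $p$.

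For the top exponent I would reparametrize so as to isolate $x^d$ without losing rank. On $\AAA^1_v\times\Gm$ (coordinates $v,t$) consider the sheaf $\mathcal{K}$ with Frobenius trace function
\[
(v,t)\mapsto \tfrac{1}{q^{r/2}}\sum_{x\in k_r}\psi_r\Big(x^d+\textstyle\sum_{e\le i\le d-1}a_i v^{d-i}x^i+tx\Big).
\]
For every $(v,t)$ the polynomial in $x$ has degree $d$ prime to $p$ with leading coefficient $1$, so the associated cohomology is concentrated in degree $1$ with constant rank $d-1$; hence, by the same perverse-origin and constant-rank argument used above for $\GGG_f$, $\mathcal{K}$ is lisse on all of $\AAA^1_v\times\Gm$ by \cite[Proposition 12]{K03}. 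Over the dense open $\Gm\times\Gm$ the substitution $x\mapsto v^{-1}x$ shows this trace equals $S(k_r;v^d,vt)$, so $\mathcal{K}|_{\Gm\times\Gm}$ is the pullback of $\FF_f$ under $(v,t)\mapsto(v^d,vt)$ and therefore has finite (arithmetic) monodromy. Since $\pi_1(\Gm\times\Gm)\twoheadrightarrow\pi_1(\AAA^1_v\times\Gm)$, the sheaf $\mathcal{K}$ has finite monodromy on the whole base; in particular its value at $(v,t)=(0,t_0)$, namely $\frac{1}{q^{r/2}}\sum_x\psi_r(x^d+t_0x)$, is a sum of roots of unity and hence an algebraic integer. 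This shows that $d$ is an FM-exponent.

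For the bottom exponent I would run the symmetric reparametrization: on $\AAA^1_w\times\Gm$ take the family with trace
\[
(w,t)\mapsto \tfrac{1}{q^{r/2}}\sum_{x\in k_r}\psi_r\Big(x^e+\textstyle\sum_{e<i\le d}a_e^{-1}a_i w^{i-e}x^i+tx\Big),
\]
which over $\Gm\times\Gm$ equals $S(k_r;a_e^{-1}w^{-e},tw^{-1})$ (substitute $x\mapsto wx$) and so is again a pullback of $\FF_f$, of finite monodromy, while its value at $w=0$ is the desired $\frac{1}{q^{r/2}}\sum_x\psi_r(x^e+t_0x)$. The crucial difference, and the main obstacle, is that the leading term $a_e^{-1}w^{d-e}x^d$ now vanishes at $w=0$: the degree of the fibre drops from $d$ to $e$, the rank of $H^1_c$ drops from $d-1$ to $e-1$, and the family fails to be lisse at $w=0$. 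The $\pi_1$-surjectivity argument therefore does not apply, and integrality at the degenerate fibre cannot simply be read off.

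To overcome this I would control the degenerate fibre by weights. The generic stalk $H^1_c(\AAA^1,\LL_{\psi(x^e+\cdots+tx)})$ is pure of weight $1$, and by finite monodromy of the generic family each of its Frobenius eigenvalues is $q^{r/2}$ times a root of unity, hence divisible by $q^{r/2}$. The plan is to show, via the specialization and vanishing-cycle sequence for the family over $\AAA^1_w$ at $w=0$, that the special-fibre cohomology $H^1_c(\AAA^1,\LL_{\psi(x^e+t_0x)})$ is, compatibly with Frobenius, a sub-quotient of the nearby-cycle cohomology assembled from these generic stalks, the missing $d-e$ classes being accounted for by vanishing cycles concentrated at $x=\infty$. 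Divisibility by $q^{r/2}$ then transfers to the special fibre, so $\frac{1}{q^{r/2}}\sum_x\psi_r(x^e+t_0x)$ is an algebraic integer and $e$ is an FM-exponent. Making this transfer rigorous — pinning down the Frobenius action on the vanishing cycles at $w=0$ and checking that no class of strictly smaller $q^{r/2}$-adic valuation is created — is the delicate point on which the bottom-exponent case rests.
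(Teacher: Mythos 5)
Your treatment of the top exponent is essentially the paper's argument (same reparametrization $(v,t)\mapsto(v^d,vt)$, same identification of the specialization at $v=0$ with the one-parameter family for $x^d$), and it is correct \emph{provided} $d$ is prime to $p$. But your opening reduction ``replace each exponent by its prime-to-$p$ part, so WLOG $d,e$ are prime to $p$'' is not valid for these two-parameter families: Artin--Schreier reduction of $sf(x)$ replaces a term $sa_ix^{i}$ with $i=p^ji'$ by $(sa_i)^{p^{-j}}x^{i'}$, so different terms acquire \emph{different} fractional powers of $s$, and the reduced family is not $\FF_{\tilde f}$ for any polynomial $\tilde f$. (The reduction is only legitimate when \emph{all} exponents are divisible by $p$, which is how the paper uses it.) Without this reduction your top-exponent argument also breaks when $p\mid d$: the rank of the fibre at $v=0$ then drops below the generic rank, $\mathcal K$ is not lisse everywhere, and the $\pi_1$-surjectivity step fails.

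The more serious gap is the bottom exponent, and you have identified it yourself: since $d>e$ the rank drop at $w=0$ is unavoidable, so no lisse-extension argument can apply, and your proposed repair via nearby/vanishing cycles and weights is a plan, not a proof. The ``delicate point'' you flag --- showing that Frobenius eigenvalues on the special stalk are among (or divide like) those of the generic stalk, with no new classes of $q$-adic valuation $<1/2$ created in the degeneration --- is exactly the statement that requires proof; it is not automatic for $\R^1\pi_!$ of a non-proper family. The paper closes precisely this gap by invoking Katz's theory of sheaves of perverse origin: since $\pi$ is affine and $\LL_{\psi(\cdot)}$ is lisse, $\R^1\pi_!\LL_{\psi(\cdot)}$ on $\AAA^2$ is of perverse origin by \cite[Corollary 6]{K03}, and Katz's semicontinuity result \cite[Corollary 10]{K03} then says that finite monodromy of such a sheaf forces finite monodromy of its restriction to \emph{any} line --- in particular to $s=0$ --- with no lisse-ness or constant-rank hypothesis. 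This single citation handles both exponents uniformly (and makes the $p\mid d$ case painless). Your vanishing-cycle sketch is morally a re-derivation of that result, but as written the bottom-exponent case, which is the heart of the proposition, is unproven.
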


\begin{proof} 
 Suppose that $\FF_f$ has finite monodromy. Then so does its pull-back by the map $(s,t)\mapsto (s^d,t)$, which has trace function
 $$
 (k_r;s,t)\mapsto -\frac{1}{q^{r/2}}\sum_{x\in k_r}\psi_r(s^df(x)+tx)=
 $$
 $$
 =-\frac{1}{q^{r/2}}\sum_{x\in k_r}\psi_r((sx)^d+a_{d-1}s(sx)^{d-1}+\cdots+a_es^{d-e}(sx)^e+(t/s)(sx))=
 $$
 $$
 =-\frac{1}{q^{r/2}}\sum_{x\in k_r}\psi_r(x^d+a_{d-1}sx^{d-1}+\cdots+a_es^{d-e}x^e+(t/s)x)
$$
which, after applying the automorphism $(s,t)\mapsto(s,t/s)$ becomes the sheaf with trace function
$$
-\frac{1}{q^{r/2}}\sum_{x\in k_r}\psi_r(x^d+a_{d-1}sx^{d-1}+\cdots+a_es^{d-e}x^e+tx).
$$
This is the restriction to $U_f\times\AAA^1$ of the sheaf $\R^1\pi_!\GGG$ on $\AAA^2$, where $\pi:\AAA^1\times\AAA^2\to\AAA^2$ is the projection $(x,s,t)\mapsto (s,t)$ and $\GGG=\LL_{\psi(x^d+a_{d-1}sx^{d-1}+\cdots+a_es^{d-e}x^e+tx)}$. Since $\pi$ is affine and $\GGG$ is lisse (so $\GGG[3]$ is perverse), this sheaf is of perverse origin \cite[Corollary 6]{K03}. Therefore, its restriction to the line $s=0$ must have finite monodromy too by \cite[Corollary 10]{K03}, so $d$ is a FM-exponent.

If $e=1$ there is nothing to prove, so assume $e>1$. As above, the pull-back of $\FF_f$ by the map $(s,t)\mapsto (a_e^{-1}s^{-e},t)$ also has finite monodromy, and has trace function 
$$
 (k_r;s,t)\mapsto -\frac{1}{q^{r/2}}\sum_{x\in k_r}\psi_r(a_e^{-1}s^{-e}f(x)+tx)=
 $$
 $$
 =-\frac{1}{q^{r/2}}\sum_{x\in k_r}\psi_r(\frac{1}{a_e}s^{d-e}(x/s)^d+\frac{a_{d-1}}{a_e}s^{d-e-1}(x/s)^{d-1}+\cdots+(x/s)^e+st(x/s))=
 $$
 $$
 =-\frac{1}{q^{r/2}}\sum_{x\in k_r}\psi_r(\frac{1}{a_e}s^{d-e}x^d+\frac{a_{d-1}}{a_e}s^{d-e-1}x^{d-1}+\cdots+x^e+stx)
$$
which, after applying the automorphism $(s,t)\mapsto(s,st)$ becomes the sheaf with trace function
$$
-\frac{1}{q^{r/2}}\sum_{x\in k_r}\psi_r(\frac{1}{a_e}s^{d-e}x^d+\frac{a_{d-1}}{a_e}s^{d-e-1}x^{d-1}+\cdots+x^e+tx).
$$
This is the restriction to $U_f\times\AAA^1$ of the sheaf $\R^1\pi_!\GGG$ on $\AAA^2$, where $\pi:\AAA^1\times\AAA^2\to\AAA^2$ is the projection $(x,s,t)\mapsto (s,t)$ and $\GGG=\LL_{\psi(\frac{1}{a_e}s^{d-e}x^d+\frac{a_{d-1}}{a_e}s^{d-e-1}x^{d-1}+\cdots+x^e+tx)}$. Since $\pi$ is affine and $\GGG$ is lisse (so $\GGG[3]$ is perverse), this sheaf is of perverse origin \cite[Corollary 6]{K03}. Therefore, its restriction to the line $s=0$ must have finite monodromy too by \cite[Corollary 10]{K03}, so $e$ is a FM-exponent.
\end{proof}

\section{Criterion for finite monodromy when $f$ is a binomial}

In this section we consider the case where $f(x)= x^d+\lambda x^e$ with $d>e>1$ and $\lambda\in k^\times$.

\begin{thm}\label{equivalence} Let $f(x)=x^d+\lambda x^e\in k[x]$ with $\lambda\neq 0$ and $d>e>1$. Then the sheaf $\mathcal F_f$ on $U_f\times\AAA^1_k$ has finite monodromy if and only if the sheaf ${\mathcal G}_{d,e}$ on $\Gmk^3$ whose trace function is
$$
(k_r;u,s,t)\mapsto -\frac{1}{q^{r/2}}\sum_{x\in k_r}\psi_r(ux^d+sx^e+tx)
$$
does.
\end{thm}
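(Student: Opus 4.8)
The plan is to exploit the scaling symmetry $x\mapsto cx$, which leaves every one of these exponential sums unchanged but moves the parameter point, in order to descend the three‑parameter sheaf $\GGG_{d,e}$ to a two‑parameter sheaf on a quotient and then to recognise $\FF_f$ as a pullback of that same two‑parameter sheaf. Every comparison of monodromy groups is thereby reduced to a statement about the maps induced on fundamental groups by explicit morphisms of tori. I argue throughout with geometric monodromy groups; the arithmetic statement follows because finiteness of the two is equivalent by \cite[Proposition 2.1]{KRLT20}.

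First I would make the scaling symmetry geometric. Let $\Gm$ act on $\Gmk^3$ by $c\cdot(u,s,t)=(c^du,c^es,ct)$ and on $\Gmk^3\times\AAA^1_k$ by $c\cdot(u,s,t,x)=(c^du,c^es,ct,c^{-1}x)$. The phase $ux^d+sx^e+tx$ is invariant, so $\LL_{\psi(ux^d+sx^e+tx)}$ is $\Gm$‑equivariant; since the projection $\pi\colon(u,s,t,x)\mapsto(u,s,t)$ is equivariant and $\R\pi_!$ respects equivariance, the sheaf $\GGG_{d,e}=\R^1\pi_!\LL_{\psi(ux^d+sx^e+tx)}(1/2)$ is $\Gm$‑equivariant on $\Gmk^3$. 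The map $q\colon\Gmk^3\to\Gmk^2$, $q(u,s,t)=(ut^{-d},st^{-e})$, fits into an isomorphism $\Gmk^3\xrightarrow{\sim}\Gmk^2\times\Gm$, $(u,s,t)\mapsto\big((ut^{-d},st^{-e}),t\big)$, under which the $\Gm$‑action becomes trivial on the $\Gmk^2$ factor and simply transitive on the $\Gm$ factor. Hence $q$ is a trivial $\Gm$‑torsor and equivariant descent gives $\GGG_{d,e}\cong q^*\HH_{d,e}$, where $\HH_{d,e}:=\GGG_{d,e}|_{\{t=1\}}$ is lisse on a dense open of $\Gmk^2$ with trace function $(k_r;a,b)\mapsto -q^{-r/2}\sum_{x\in k_r}\psi_r(ax^d+bx^e+x)$. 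As $q$ is smooth surjective with geometrically connected ($\Gm$) fibres, $q_*$ is surjective on $\pi_1$, so $\GGG_{d,e}$ and $\HH_{d,e}$ have the same geometric monodromy group; in particular one is finite iff the other is.

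Next I would realise $\FF_f$ as a pullback of $\HH_{d,e}$. Writing $sf(x)+tx=sx^d+(s\lambda)x^e+tx$ and applying proper base change for $\R\pi_!$ along the map $\Phi\colon\Gmk\times\Gmk\to\Gmk^3$, $\Phi(s,t)=(s,s\lambda,t)$, yields $\FF_f|_{U_f\times\Gmk}\cong\Phi^*\GGG_{d,e}\cong(q\circ\Phi)^*\HH_{d,e}=\phi^*\HH_{d,e}$ over a common dense open, where $\phi(s,t)=(st^{-d},s\lambda t^{-e})$. Up to the translation $(a,b)\mapsto(a,\lambda^{-1}b)$ the morphism $\phi$ is the isogeny of tori $\Gmk^2\to\Gmk^2$ with integer matrix $\left(\begin{smallmatrix}1&-d\\1&-e\end{smallmatrix}\right)$, of determinant $e-d\neq0$; it is therefore finite, and its generic geometric fibre has $(d-e)_{\hat p}$ points (the values of $t$ with $t^{d-e}=b/(a\lambda)$, $s$ being then determined). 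Consequently $\phi_*$ has image of finite index in $\pi_1(\Gmk^2)$, and this persists after restricting to the dense open $U_f\times\Gmk$ since $\pi_1$ of a dense open surjects onto $\pi_1$ of the ambient smooth variety. A standard group‑theoretic remark now finishes this step: if $H\leq G$ has finite index, then a representation $\rho$ of $G$ has finite image iff $\rho|_H$ does, because $\ker(\rho|_H)=\ker\rho\cap H$ has finite index in $H$, hence in $G$, whence $\ker\rho$ has finite index in $G$. Applied to the monodromy representation of $\HH_{d,e}$ this shows $\HH_{d,e}$ has finite monodromy iff $\phi^*\HH_{d,e}\cong\FF_f|_{U_f\times\Gmk}$ does. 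Finally, $\FF_f$ has finite monodromy on $U_f\times\AAA^1_k$ iff it does on $U_f\times\Gmk$, since the inclusion is surjective on $\pi_1$ (as used in the proof of Proposition~\ref{criterion-algint}). Chaining the equivalences $\GGG_{d,e}\leftrightarrow\HH_{d,e}\leftrightarrow\FF_f$ proves the theorem.

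I expect the main obstacle to be the third paragraph when $p\mid(d-e)$. There the Jacobian of $\phi$ vanishes identically, $\phi$ is not étale, and one cannot directly bound the index of $\phi_*\pi_1$ by a covering degree. I would resolve this by factoring the power map $t\mapsto t^{d-e}$ as the separable map $t\mapsto t^{(d-e)_{\hat p}}$ followed by a power of Frobenius: since a purely inseparable morphism is a universal homeomorphism and hence induces an isomorphism on étale fundamental groups, the index of $\phi_*\pi_1$ is governed by the separable degree $(d-e)_{\hat p}$ and remains finite, which is all the argument needs. A secondary point deserving explicit care is the descent of the second paragraph, namely checking that the equivariant structure on $\GGG_{d,e}$ really is the one induced by the torsor $q$, so that $\GGG_{d,e}\cong q^*\HH_{d,e}$ holds as an isomorphism of sheaves and not merely of Frobenius trace functions; this follows from the equivariance of $\R\pi_!$ together with the explicit trivialization $\Gmk^3\cong\Gmk^2\times\Gm$, but it should be stated rather than left implicit.
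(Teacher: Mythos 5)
Your argument is correct, but it reaches the equivalence by a genuinely different decomposition than the paper's. The paper never forms the quotient by the scaling action; it pulls both sheaves \emph{up} instead: it replaces $\FF_f$ by its pullback under the finite map $(s,t)\mapsto(s^d,t)$ and $\GGG_{d,e}$ by its pullback under the finite map $(u,s,t)\mapsto(u^d,s^{d-e}u^e\lambda,t)$, and then uses the same scaling substitution $x\mapsto ux$ you exploit to see that the two pullbacks have matching trace functions --- concretely, the pulled-back $\GGG_{d,e}$ depends on $(u,s,t)$ only through $(s,ts/u)$, so it is itself pulled back from the pulled-back $\FF_f$ along a torus homomorphism with connected kernel, which is exactly the mirror image of your descent step; finiteness then transfers along finite surjective maps just as in your finite-index lemma. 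Your organization buys what the paper's six-line write-up leaves implicit: genuine sheaf-level isomorphisms (equivariant descent along the $\Gm$-torsor $q$, rather than identities of trace functions, which the paper can afford because Proposition~\ref{criterion-algint} makes finite monodromy a trace-function property); the stronger conclusion that $\GGG_{d,e}$ and $\HH_{d,e}$ have literally equal geometric monodromy groups; and an explicit treatment of the inseparable case $p\mid(d-e)$ via the Frobenius factorization --- a point the paper's bare phrase ``finite map'' must also absorb, since its maps $s\mapsto s^d$, $u\mapsto u^d$ are likewise inseparable when $p\mid d$. What the paper's route buys is brevity: two pullbacks and one change of variable, with no descent data to construct and no intermediate two-parameter sheaf to name. (One harmless slip on your side: the determinant of $\left(\begin{smallmatrix}1&-d\\1&-e\end{smallmatrix}\right)$ is $d-e$, not $e-d$.)
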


\begin{proof} $\FF_f$ has finite monodromy if and only if its pull-back by the finite map $(s,t)\mapsto(s^d,t)$ does, which has trace function
$$
(k_r;s,t)\mapsto -\frac{1}{q^{r/2}}\sum_{x\in k_r}\psi_r(s^dx^d+s^d\lambda x^e+tx)=
-\frac{1}{q^{r/2}}\sum_{x\in k_r}\psi_r((sx)^d+s^{d-e}\lambda (sx)^e+tx)=
$$
$$
=-\frac{1}{q^{r/2}}\sum_{x\in k_r}\psi_r((ux)^d+s^{d-e}\lambda (ux)^e+tx)=-\frac{1}{q^{r/2}}\sum_{x\in k_r}\psi_r(u^dx^d+s^{d-e}u^e\lambda x^e+tx).
$$
This, in turn, is the trace function of the pull-back of ${\mathcal G}_{d,e}$ under the finite map $(u,s,t)\mapsto(u^d,s^{d-e}u^e\lambda,t)$, so it has finite monodromy if and only if ${\mathcal G}_{d,e}$ does.
\end{proof}

By \cite[Theorem 2.8]{KRLT20}, we get the following numerical criterion for finite monodromy of $\FF_f$ (which is also valid when $d$ or $e$ is a multiple of $p$):

\begin{cor}
    The sheaf $\FF_f$ has finite monodromy if and only if
    $$
    V(x)+V(y)+V(-dx-ey)\geq\frac{1}{2}
    $$
for every $x,y\in(\QQ/\ZZ)_{\mathrm{prime\,to\,}p}$, not both zero.
\end{cor}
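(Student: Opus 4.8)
The plan is to combine the sheaf-theoretic equivalence just established in Theorem~\ref{equivalence} with the external numerical criterion quoted from \cite[Theorem 2.8]{KRLT20}. By Theorem~\ref{equivalence}, $\FF_f$ has finite monodromy if and only if the three-parameter sheaf $\GGG_{d,e}$ on $\Gmk^3$, with trace function $(k_r;u,s,t)\mapsto -q^{-r/2}\sum_{x\in k_r}\psi_r(ux^d+sx^e+tx)$, does. So the whole content of the corollary is to verify that the hypotheses of \cite[Theorem 2.8]{KRLT20} are met for this particular family and to read off the resulting inequality.

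First I would recall the precise shape of the criterion in \cite[Theorem 2.8]{KRLT20}. That theorem concerns a multi-parameter family $(u_1,\ldots,u_m;t)\mapsto \sum_x\psi(\sum_j u_j x^{d_j}+tx)$ (or the homogeneous version in which every monomial carries its own parameter), and states that the associated sheaf has finite monodromy precisely when a Stickelberger-type sum of values of the Kubert function $V=V_p$ on the exponents is bounded below by $\tfrac12$. Here our exponent set is $\{d,e,1\}$, each monomial $x^d$, $x^e$, $x$ carrying a free parameter $u,s,t$ ranging over $\Gm$. I would substitute $(d_1,d_2,d_3)=(d,e,1)$ into the general inequality. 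The criterion is homogeneous of degree one in the ``frequency'' variables attached to the monomials: writing $x$ and $y$ for the free frequencies dual to the exponents $d$ and $e$, the coefficient $1$ monomial forces its frequency to be $-(dx+ey)$ by the linear relation coming from the additive character on the last variable, so the sum of $V$-values that must be controlled is exactly $V(x)+V(y)+V(-dx-ey)$, for $(x,y)$ ranging over all of $(\QQ/\ZZ)_{\mathrm{prime\,to\,}p}^2$ not both zero. The final step is simply to assemble these pieces: finiteness $\Leftrightarrow$ the displayed inequality holds for all such $(x,y)$.

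The step I expect to carry the real weight is the translation between the abstract hypotheses of \cite[Theorem 2.8]{KRLT20} and the concrete family $\GGG_{d,e}$: one must check that the genericity or independence condition that \cite[Theorem 2.8]{KRLT20} imposes on the exponents $\{d,e,1\}$ is automatically satisfied (it is, since $1$ is coprime to $p$ and serves as the distinguished linear term), and that the indexing of the $V$-arguments matches, i.e.\ that the frequency of the linear monomial really is forced to be $-dx-ey$ rather than some other linear combination. This is the only genuine bookkeeping obstacle; the parenthetical remark that the criterion ``is also valid when $d$ or $e$ is a multiple of $p$'' is handled for free by the Artin--Schreier reduction $\LL_{\psi(f)}\cong\LL_{\psi(f^p)}$ recalled in the introduction, since replacing an exponent by its prime-to-$p$ part leaves both the sheaf and the value of $V_p$ unchanged. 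Once the dictionary is fixed, the corollary follows immediately by quoting \cite[Theorem 2.8]{KRLT20}.
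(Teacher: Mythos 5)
Your proposal is correct and follows exactly the paper's route: the corollary appears in the paper with no separate proof precisely because it is Theorem~\ref{equivalence} combined with the numerical criterion of \cite[Theorem 2.8]{KRLT20} applied to the exponent set $\{d,e,1\}$, with the frequency attached to the linear term forced to be $-dx-ey$, which is exactly your argument. One small imprecision in your last paragraph: replacing $d$ by $d_{\hat p}$ does not leave the quantity $V_p(-dx-ey)$ literally unchanged pointwise; rather, the two systems of inequalities (for $d$ and for $d_{\hat p}$) correspond to each other under the substitution $x\mapsto p^{\alpha}x$, which is a bijection of $(\QQ/\ZZ)_{\mathrm{prime\,to\,}p}$ preserving $V_p(x)$, so your treatment of the case $p\mid d$ or $p\mid e$ goes through after this minor adjustment.
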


The problem of determining finite monodromy of ${\mathcal G}_{d,e}$ has been completely solved by Katz and Tiep in \cite[Theorem 11.2.3]{KT24}, and we have:

\begin{thm}
 Let $f(x)=x^d+\lambda x^e$ with $d>e>1$. Then the sheaf $\FF_f$ has finite monodromy if and only if
 \begin{enumerate}
  \item $d_{\hat p}=e_{\hat p}=1$
  \item $d_{\hat p}=1$ and $e$ is an FM-exponent.
  \item $e_{\hat p}=1$ and $d$ is an FM-exponent.
  \item $d_{\hat p}=p^a+1$ and $e_{\hat p}=p^b+1$ for some $a,b\geq 0$ ($>0$ if $p=2$).
  \item $d_{\hat p}=\frac{p^{ab}+1}{p^a+1}$ and $e_{\hat p}=\frac{p^{ac}+1}{p^a+1}$ for some $a>0$ and odd $b,c>1$.
  \item $p>2$, $d_{\hat p}=\frac{p^a+1}{2}$ and $e_{\hat p}=\frac{p^b+1}{2}$ for some $a,b\geq 0$.
  \item $p=2$, $\{d_{\hat p},e_{\hat p}\}=\{13,3\}$.
  \item $p=3$, $\{d_{\hat p},e_{\hat p}\}=\{7,4\},\{7,2\},\{5,4\}$ or $\{5,2\}$.
  \item $p=5$, $\{d_{\hat p},e_{\hat p}\}=\{3,2\}$ or $\{7\}$.
 \end{enumerate}
\end{thm}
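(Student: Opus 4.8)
The plan is to reduce everything to the three-parameter family $\mathcal G_{d,e}$ and then invoke the classification of Katz and Tiep. By Theorem \ref{equivalence}, $\FF_f$ has finite monodromy if and only if $\mathcal G_{d,e}$ does, so the whole statement becomes a translation of the classification of those pairs $(d,e)$ for which $\mathcal G_{d,e}$ has finite monodromy into the listed arithmetic conditions. First I would note that this finiteness depends only on $d_{\hat p}$ and $e_{\hat p}$: writing $d=p^k d_{\hat p}$, the identity $\Tr_{k_r/\Fp}(ux^{p^kd_{\hat p}})=\Tr_{k_r/\Fp}(u^{p^{-k}}x^{d_{\hat p}})$ gives $\psi_r(ux^d)=\psi_r(u^{p^{-k}}x^{d_{\hat p}})$, and likewise for the $sx^e$ term; since raising to the power $p^{-k}$ permutes $k_r^\times$, the trace functions of $\mathcal G_{d,e}$ and $\mathcal G_{d_{\hat p},e_{\hat p}}$ take the same set of values, so the algebraic-integer criterion of \cite[Proposition 2.1]{KRLT20} (as used in Proposition \ref{criterion-algint}) shows the two sheaves have finite monodromy together.

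Next I would dispose of the degenerate cases. If $d_{\hat p}=1$, the term $ux^d$ is Artin--Schreier equivalent to the linear term $u^{p^{-k}}x$, so $\mathcal G_{d,e}$ is the pull-back of the monomial family $\FF_{x^e}$ along the smooth surjective map $(u,s,t)\mapsto(s,t+u^{p^{-k}})$, whose geometric fibres are connected; hence $\mathcal G_{d,e}$ and $\FF_{x^e}$ have finite monodromy together, that is, exactly when $e$ is an FM-exponent. This produces case (2), and case (1) when in addition $e_{\hat p}=1$, so that $e$ is automatically an FM-exponent by clause (3) of Theorem \ref{KT1}. The symmetric argument with $e_{\hat p}=1$ gives case (3). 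Thus the only situation left to analyse is $d_{\hat p},e_{\hat p}\geq 2$.

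In that remaining range $\mathcal G_{d_{\hat p},e_{\hat p}}$ is a genuine three-parameter family, and the determination of its finite monodromy is exactly the content of \cite[Theorem 11.2.3]{KT24}. I would therefore read off their classification and match it clause by clause against cases (4)--(9), using as a guide the numerical criterion of the preceding Corollary, namely $V(x)+V(y)+V(-dx-ey)\geq\tfrac12$ for all $(x,y)\in(\QQ/\ZZ)_{\mathrm{prime\,to\,}p}^2$ not both zero, which singles out precisely these families.

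The main obstacle is purely this last translation. One must verify that Katz and Tiep's parametrization of the finite cases corresponds term for term to the families (4)--(9) listed here, in particular transcribing correctly the sporadic entries (7), (8), (9) in characteristics $2,3,5$, and one must check that the degenerate reductions of the second paragraph neither overlap with nor omit any Katz--Tiep family, so that (1)--(9) together are exhaustive and mutually consistent as an ``if and only if''. No new geometric input is needed beyond Theorem \ref{equivalence} and \cite[Theorem 11.2.3]{KT24}; the work lies entirely in the bookkeeping.
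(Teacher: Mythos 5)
Your overall strategy --- reduce to $\mathcal G_{d,e}$ via Theorem \ref{equivalence}, strip the $p$-parts, and read the answer off Katz--Tiep --- is the same as the paper's, and your treatment of the cases $d_{\hat p}=1$ or $e_{\hat p}=1$ is sound (the paper does that reduction directly on $\FF_f$ by Artin--Schreier reducing the degenerate term into the linear one, but your route through $\mathcal G_{d,e}$ works equally well). However, your third paragraph contains a genuine gap: it is not true that for $d_{\hat p},e_{\hat p}\geq 2$ the family $\mathcal G_{d_{\hat p},e_{\hat p}}$ is always ``a genuine three-parameter family'' governed by \cite[Theorem 11.2.3]{KT24}. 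When $d_{\hat p}=e_{\hat p}$ --- which happens exactly when $d/e$ is a power of $p$, and which the statement allows since $d>e$ only forces the exponents, not their prime-to-$p$ parts, to differ --- the two nonlinear terms collapse after Artin--Schreier reduction: $u^{p^{-\alpha}}x^{d_{\hat p}}+s^{p^{-\beta}}x^{e_{\hat p}}+tx=(u^{p^{-\alpha}}+s^{p^{-\beta}})x^{d_{\hat p}}+tx$. The resulting object is a pull-back of the two-parameter monomial family $\FF_{x^{d_{\hat p}}}$, and it is not covered by the classification of multi-parameter families with distinct prime-to-$p$ exponents in \cite[Theorem 11.2.3]{KT24}. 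The paper handles this with a separate branch, concluding by Theorem \ref{KT1} instead.

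This is not mere bookkeeping: the diagonal pairs account for actual entries of the statement --- case (4) with $a=b$, case (5) with $b=c$, case (6) with $a=b$, and the entry $\{d_{\hat p},e_{\hat p}\}=\{7\}$ for $p=5$ in case (9), which exists only because $7$ is an FM-exponent for $p=5$ by Theorem \ref{KT1}(4) and appears nowhere in \cite[Theorem 11.2.3]{KT24}. With the proposal as written, your clause-by-clause matching would find these entries of (4)--(9) unaccounted for: you would have no argument establishing the ``if'' direction for them, and symmetrically no way to rule them out, so the ``only if'' matching stalls as well. The fix is exactly the paper's one-line case split inside the range $d_{\hat p},e_{\hat p}>1$: if $d_{\hat p}=e_{\hat p}$, merge the terms and invoke Theorem \ref{KT1}; otherwise invoke \cite[Theorem 11.2.3]{KT24}.
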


\begin{proof} If $d_{\hat p}=e_{\hat p}=1$ then both $d$ and $e$ are powers of $p$, so $\FF_f=0$ trivially has finite monodromy. If $d_{\hat p}\neq 1$ but $e_{\hat p}=1$, so $d=p^\alpha d_{\hat p}$ and $e=p^\beta$, then $sf(x)+tx$ is Artin-Schreier equivalent to $s^{p^{-\alpha}}x^{d_{\hat p}}+(t+(s\lambda)^{p^{-\beta}})x$, so our object is the pull-back by the map $(s,t)\mapsto(s^{p^{-\alpha}},t+(s\lambda)^{p^{-\beta}})$ of sheaf $\FF_{x^{d_{\hat p}}}$, and we conclude by Katz and Tiep's Theorem \ref{KT1}. A similar argument solves the case $d_{\hat p}=1,e_{\hat p}\neq 1$.

Asume both $d_{\hat p},e_{\hat p}$ are $\neq 1$. By Theorem \ref{equivalence}, $\FF_f$ has finite monodromy if and only if the local system on $\Gm^3$ whose trace function is
$$
(k_r;u, s,t)\mapsto -\frac{1}{q^{r/2}}\sum_{x\in k_r}\psi_r(ux^d+sx^e+tx)
$$
does. Since $ux^d+sx^e+tx$ is Artin-Schreier equivalent to $u^{p^{-\alpha}}x^{d_{\hat p}}+s^{p^{-\beta}}x^{e_{\hat p}}+tx$, this is the push-forward under the map $(u,s,t)\mapsto(u^{p^{\alpha}},s^{p^{\beta}},t)$ of the local system with trace function
$$
(k_r;u,s,t)\mapsto -\frac{1}{q^{r/2}}\sum_{x\in k_r}\psi_r(ux^{d_{\hat p}}+sx^{e_{\hat p}}+tx)
$$
where $d_{\hat p},e_{\hat p}>1$ are prime to $p$. If $d_{\hat p}=e_{\hat p}$, we conclude by Katz and Tiep's Theorem \ref{KT1}. Otherwise, we conclude by Katz and Tiep's Theorem \cite[Theorem 11.2.3]{KT24}.
\end{proof}

\section{Criterion for finite monodromy when $f$ is of Belyi type}

In this section we consider the case where $f(x)$ is of the form $(x-\alpha)^d(x-\beta)^e$ with $\alpha\neq\beta$ and $d,e\geq 1$, at least one of which is prime to $p$ (otherwise, we could replace $f$ by $f^{1/p}$). In fact, we will see that the finiteness of the monodromy group depends only on $d$ and $e$, not on $\alpha$ or $\beta$.

\begin{thm}\label{Belyicrit} Let $f(x)=(x-\alpha)^d(x-\beta)^e\in k[x]$ with $\alpha\neq\beta$ and $d,e\geq 1$. Then the two-parameter local system $\mathcal F_f$ on $U_f\times\AAA^1_k$ has finite monodromy if and only if
$$
V(x)+V(y)+V(y-(d+e)x)+V(ex-y)+V(-ex)\geq\frac{3}{2}
$$
for every $x,y\in(\QQ/\ZZ)_{\text{prime to }p}$ with $x,y\neq 0$, and
$$
V(x)+V(-(d+e)x)\geq\frac{1}{2}
$$
for every $x\in(\QQ/\ZZ)_{\text{prime to }p}$ with $x\neq 0$.
\end{thm}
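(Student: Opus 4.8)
The plan is to use Proposition~\ref{criterion-algint} to replace the finite‑monodromy condition by the requirement that the normalized sums $S(k_r;s,t)$ be algebraic integers, and then to evaluate these sums explicitly in terms of Gauss and Jacobi sums, reading off their $q^r$‑adic valuations from Stickelberger's theorem in the $V$‑form recalled in the introduction.

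First I would normalize $\alpha,\beta$. Setting $u=x-\alpha$ and then $v=u/(\beta-\alpha)$ gives
$$\sum_{x\in k_r}\psi_r(s(x-\alpha)^d(x-\beta)^e+tx)=\psi_r(t\alpha)\sum_{v\in k_r}\psi_r(s'v^d(v-1)^e+t'v),$$
where $s'=s(\beta-\alpha)^{d+e}$ and $t'=t(\beta-\alpha)$; as $(s,t)$ runs over $k_r^\times\times k_r^\times$ so does $(s',t')$ (over any $k_r$ containing $\beta-\alpha$), and $\psi_r(t\alpha)$ is a root of unity. This simultaneously exhibits the claimed independence from $\alpha,\beta$ and reduces the problem to $g(v)=v^d(v-1)^e$. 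By Proposition~\ref{criterion-algint}, $\FF_f$ has finite monodromy if and only if $S(k_r;s,t)=q^{-r/2}\sum_v\psi_r(sg(v)+tv)$ is an algebraic integer for every $r$ and every $(s,t)\in k_r^\times\times k_r^\times$.

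Next I would expand multiplicatively in both $s$ and $t$. For $v\neq 0,1$ write $\psi_r(sg(v))=\frac{1}{q^r-1}\sum_\chi G(\chi)\chi^{-1}(sg(v))$ and $\psi_r(tv)=\frac{1}{q^r-1}\sum_\rho G(\rho)\rho^{-1}(tv)$, summing over all multiplicative characters of $k_r^\times$, and treat $v\in\{0,1\}$ (where $g$ vanishes) separately. Summing $\chi^{-d}\rho^{-1}(v)\,\chi^{-e}(v-1)$ over $v\neq 0,1$ gives, up to the root of unity $\chi^{-e}(-1)$, the Jacobi sum $J(\chi^{-d}\rho^{-1},\chi^{-e})$, which for nontrivial arguments equals $G(\chi^{-d}\rho^{-1})G(\chi^{-e})/G(\chi^{-(d+e)}\rho^{-1})$. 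Hence the coefficient of $\chi^{-1}(s)\rho^{-1}(t)$ in $S$ is, up to a $p$‑adic unit and a root of unity,
$$q^{-r/2}\,\frac{G(\chi)G(\rho)G(\chi^{-d}\rho^{-1})G(\chi^{-e})}{G(\chi^{-(d+e)}\rho^{-1})}.$$
Writing $\chi=\epsilon^a,\ \rho=\epsilon^b$ and $x=\tfrac{a}{q^r-1},\ y=\tfrac{b}{q^r-1}$, Stickelberger's theorem gives its $q^r$‑adic valuation as $V(x)+V(y)+V(-dx-y)+V(-ex)-V(-(d+e)x-y)-\tfrac12$, and the complement relation $V(z)+V(-z)=1$ turns nonnegativity of this valuation into
$$V(x)+V(y)+V(-dx-y)+V(-ex)+V((d+e)x+y)\ge\tfrac32 .$$
After the (denominator‑prime‑to‑$p$ preserving) bijection $(x,y)\mapsto(x,y-(d+e)x)$ of $(\QQ/\ZZ)_{\text{prime to }p}^2$, this becomes exactly the stated five‑term inequality. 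To pass from $S$ to its coefficients I would use that the normalized multiplicative Fourier transform on $(k_r^\times)^2$ is invertible with inverse given by averaging against characters (the normalizing $(q^r-1)^{-2}$ being a $p$‑adic unit), so $S$ is integral for all $(s,t)$ iff every coefficient is; and since an element of $\QQ(\zeta_p,\zeta_{q^r-1})$ is integral iff it has nonnegative valuation at every place above $p$, with Frobenius replacing $\epsilon^a$ by $\epsilon^{pa}$ and $V(pz)=V(z)$, nonnegativity at one place suffices.

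The remaining and most delicate point, which I expect to be the main obstacle, is the systematic analysis of the degenerate contributions and the verification that they assemble into precisely the two stated inequalities and nothing stronger. Here one must treat separately the sub‑loci of the generic coefficient where one of $\chi^{-d}\rho^{-1}$, $\chi^{-e}$, $\chi^{-(d+e)}\rho^{-1}$ becomes trivial (using $J(\triv,\Lambda)=J(\Lambda,\triv)=-1$ and $J(\Lambda,\Lambda^{-1})=-\Lambda(-1)$), the $s$‑constant mode $\chi=\triv$, and the $t$‑constant mode $\rho=\triv$, the last of which also absorbs the boundary terms coming from $v\in\{0,1\}$. I expect to find that $\chi=\triv$ always carries an extra factor $q^{r/2}$ and so imposes no condition; that the locus $\chi^{-(d+e)}\rho^{-1}=\triv$, together with the corner of the $t$‑constant mode where $\chi^{-(d+e)}=\triv$, produce exactly $V(x)+V(-(d+e)x)\ge\tfrac12$ for all $x\neq 0$ (the two‑term inequality, which in that corner reads $V(x)\ge\tfrac12$); and that the remaining degeneracies, including the generic $t$‑constant condition, coincide with the five‑term inequality evaluated at special points (for instance $y=(d+e)x$), hence are already subsumed. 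The crux is precisely this bookkeeping, where the asymmetry between $s$ (with $g$ vanishing at two points) and $t$ (with $v$ vanishing at one) must be handled carefully so as to confirm that no stratum yields a constraint outside the two stated families.
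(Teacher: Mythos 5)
Your route is the same as the paper's: reduce to $f(x)=x^d(x-1)^e$ by translation and scaling, apply Proposition~\ref{criterion-algint}, convert integrality of $S(k_r;s,t)$ for all $(s,t)\in(k_r^\times)^2$ into valuation bounds on the character sums indexed by pairs $(\chi,\eta)$ (your multiplicative Fourier inversion is precisely the Mellin-transform step the paper imports from \cite{KRLT20}), evaluate the generic term as a ratio of Gauss sums, and apply Stickelberger together with the substitution $y\mapsto y-(d+e)x$. All of that is correct and matches the paper's proof.

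The gap sits exactly in the part you defer as ``bookkeeping''. Your list of degenerate evaluations ($J(\triv,\Lambda)=J(\Lambda,\triv)=-1$, $J(\Lambda,\overline\Lambda)=-\Lambda(-1)$) misses the stratum where \emph{both} $\chi^e$ and $\chi^d\eta$ are trivial, on which the Jacobi sum is $J(\triv,\triv)=q^r-2$. For odd $p$ this is a $p$-adic unit and your expectation goes through, but for $p=2$ one has $\mathrm{ord}_{q^r}(q^r-2)=\frac{1}{ar}>0$, so integrality on this stratum yields only $V(x)+V(y)\geq\frac{1}{2}-\frac{1}{ar}$, strictly weaker than the stated two-term inequality; and since the relevant $V$-values lie in $\frac{1}{ar}\ZZ$ when $p=2$, discreteness does not absorb the defect. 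The paper closes this by pulling $\chi,\eta$ back through the norm to the degree-$nr$ extension for every $n$, obtaining $V(x)+V(y)\geq\frac{1}{2}-\frac{1}{anr}$, and letting $n\to\infty$. Without this limiting argument over extensions --- an idea absent from your proposal --- the ``only if'' direction is unproved precisely at those $x\neq 0$ with $ex\equiv 0$ in $\QQ/\ZZ$ (and likewise at the corner of the $\eta=\triv$ mode where $\chi^d$ and $\chi^e$ are both trivial, where the same $q^r-2$ appears), so the necessity of $V(x)+V(-(d+e)x)\geq\frac{1}{2}$ is not established. A smaller slip in the same deferred part: the boundary terms $v\in\{0,1\}$ feed into the $\chi=\triv$ modes (since $g(v)=0$ there annihilates every nontrivial $\chi$), not into the $\eta=\triv$ mode as you assert; this matters for your claim that the $\chi=\triv$ modes carry an extra factor of $q^r$ and impose no condition, which is true but only once those boundary contributions are correctly attributed.
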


\begin{proof} 
 We argue as in \cite[Theorem 2.7]{KRLT20}. By proposition \ref{criterion-algint}, $\FF_f$ has finite monodromy if and only if $\frac{1}{q^{r/2}}\sum_{x\in k_r}\psi_r(sf(x)+tx)$ is an algebraic integer for every $r\geq 1$ and $s,t\in k_r^\times$ or, equivalently, if and only if the $q^r$-adic valuation of $\sum_{x\in k_r}\psi_r(sf(x)+tx)$ is $\geq 1/2$ for every $r\geq 1$ and $s,t\in k_r^\times$. Since $\sum_{x\in k_r}\psi_r(sf(x+\alpha)+tx)=\sum_{x\in k_r}\psi_r(sf(x)+tx-t\alpha)=\psi_r(-t\alpha)\sum_{x\in k_r}\psi_r(sf(x)+tx)$ and $\psi_r(-t\alpha)$ is a root of unity we may assume, without loss of generality, that $\alpha=0$. Moreover, since $\sum_{x\in k_r}\psi_r(sf(x)+tx)=\sum_{x\in k_r}\psi_r(sf(\beta x)+t\beta x)$, the local system associated to the polynomial $f(x)$ is the pull-back via the isomorphism $(s,t)\mapsto (\beta^{d+e}s,\beta t)$ of the local system associated to the polynomial $\beta^{-d-e}f(\beta x)$, so we may assume also that $\beta=1$, so that $f(x)=x^d(x-1)^e$.
 
 We invoke the Mellin transform as in \cite{KRLT20} to deduce that $\mathcal F_f$ has finite monodromy if and only if for every $r\geq 1$ and every pair of (possibly trivial) multiplicative characters $\chi,\eta$ of $k_r^\times$, the sum
 $$
 S_r(\chi,\eta):=\sum_{s,t\in k_r^\times}\chi(s)\eta(t)\sum_{x\in k_r}\psi_r(sf(x)+tx)
 $$
 has $q^r$-adic valuation $\geq 1/2$.
 
 Now 
 $$
 \sum_{s,t\in k_r^\times}\chi(s)\eta(t)\sum_{x\in k_r}\psi_r(sf(x)+tx)=\sum_{s,t\in k_r^\times}\chi(s)\eta(t)\sum_{x\in k_r}\psi_r(sx^d(x-1)^e+tx)=
 $$
 $$
 =\sum_{x\in k_r}\left(\sum_{s\in k_r^\times}\chi(s)\psi_r(sx^d(x-1)^e)\right)\left(\sum_{t\in k_r^\times}\eta(t)\psi_r(tx)\right).
 $$
 
 If $\chi$ and $\eta$ are both trivial, the sum is
 $$
 S_r(\triv,\triv)=\sum_{x\in k_r\backslash\{0,1\}}1+(q^r-1)^2-(q^r-1)=q^r(q^r-2)
 $$
 which is a multiple of $q^r$. If $\chi$ is trivial but $\eta$ is not, then
 $$
 S_r(\triv,\eta)=\sum_{x\in k_r}\left(\sum_{s\in k_r^\times}\psi_r(sx^d(x-1)^e)\right)\left(\sum_{t\in k_r^\times}\eta(t)\psi_r(tx)\right)=
 $$
 $$
 =\sum_{x\in k_r\backslash\{0,1\}}(-\overline\eta(x)G(\eta))+(q^r-1) G(\eta)=q^rG(\eta)
 $$
is also a multiple of $q^r$, where $G(\eta)=\sum_{t\in k_r^\times}\eta(t)\psi_r(t)$ is the Gauss sum (which is an algebraic integer). If $\eta$ is trivial but $\chi$ is not, then
$$
S(\chi,\triv)=-G(\chi)\sum_{x\in k_r\backslash\{0,1\}}\overline\chi^d(x)\overline\chi^e(x-1)=-G(\chi)\overline\chi(-1)^e\cdot J(\overline\chi^{d},\overline\chi^e)
$$
where $J(-,-)$ denotes the Jacobi sum. Finally, if both $\chi$ and $\eta$ are nontrivial, we have
$$
S(\chi,\eta)=\sum_{x\in k_r}G(\chi)\overline\chi(x^d(x-1)^e)G(\eta)\overline\eta(x)=
$$
$$
=G(\chi)G(\eta)\sum_{x\in k_r}\overline\chi(x^d(x-1)^e)\overline\eta(x)
=G(\chi)G(\eta)\sum_{x\in k_r\backslash\{0,1\}}\overline\chi^d\overline\eta(x)\overline\chi^e(x-1)=
$$
$$
=G(\chi)G(\eta)\overline\chi(-1)^e\cdot J(\overline\chi^d\overline\eta,\overline\chi^e).
$$

If $\chi^{d+e}\eta$ is non-trivial, then $J(\overline\chi^d\overline\eta,\overline\chi^e)=G(\overline\chi^d\overline\eta)G(\overline\chi^e)/G(\overline\chi^{d+e}\overline\eta)$. If $\chi=\epsilon^{-(q^r-1)x}$ and $\eta=\epsilon^{-(q^r-1)y}$ where $\epsilon$ is the Teichmüller character and $x,y\in\frac{1}{q^r-1}\ZZ/\ZZ$, by the Stickelberger theorem we get
$$
\mathrm{ord}_{q^r}S(\chi,\eta)=V(x)+V(y)+V(-dx-y)+V(-ex)-V(-(d+e)x-y)=
$$
$$
=V(x)+V(y)+V(-dx-y)+V(-ex)+V((d+e)x+y)-1
$$
since $V(t)+V(-t)=1$ if $t\neq 0$. We obtain the inequality in the statement after applying the change of variable $y\mapsto y-(d+e)x$ (so the condition $(d+e)x+y\neq 0$ becomes $y\neq 0$).

If $\chi^{d+e}\eta$ is trivial (that is, if $(d+e)x+y=0$) but $\chi^e$ is not, then $J(\overline\chi^d\overline\eta,\overline\chi^e)=G(\overline\chi^d\overline\eta)G(\overline\chi^e)/q^r$, so we get
$$
\mathrm{ord}_{q^r}S(\chi,\eta)=V(x)+V(y)+V(-dx-y)+V(-ex)-1=V(x)+V(y)=V(x)+V(-(d+e)x).
$$

Finally, if both $\chi^e$ and $\chi^d\eta$ are trivial, then $J(\overline\chi^d\overline\eta,\overline\chi^e)=q^r-2$, so (note that $q^r\neq 2$, since there are non-trivial characters in $k_r^\times$):
$$
\mathrm{ord}_{q^r}J(\overline\chi^d\overline\eta,\overline\chi^e)=\left\{\begin{array}{ll}
                           \frac{1}{ar} & \text{if }p=2 \\
                           0 & \text{if }p\neq 2
                          \end{array}
\right.
$$

In the first case, we get $\mathrm{ord}_{q^r}S(\chi,\eta)=V(x)+V(y)+\frac{1}{ar}$ and the finite monodromy condition translates to
$$
V(x)+V(y)\geq \frac{1}{2}-\frac{1}{ar}.
$$
But this needs to hold for every $r\geq 1$, so if we consider the pulled-back characters $\chi\circ\mathrm{Nm}$ and $\eta\circ\mathrm{Nm}$ in the extension of $k$ of degree $nr$ for any $n\geq 1$, we would get the condition
$$
V(x)+V(y)\geq \frac{1}{2}-\frac{1}{anr}.
$$
Letting $n\to\infty$, we conclude that, for the monodromy to be finite, we need
$$
V(x)+V(y)\geq\frac{1}{2}
$$
for any $p$. To sum up, whenever $(d+e)x+y=0$, we need
$$
V(x)+V(y)=V(x)+V(-(d+e)x)\geq\frac{1}{2}.
$$
\end{proof}

Note that the second condition is the criterion for finite monodromy of the local system $\FF_{x^{d+e}}$ corresponding to the monomial $x^{d+e}$, by \cite[Section 5]{K18}. So, if the monodromy of $\FF_f$ is finite, then $d+e$ must be an FM-exponent as given in Theorem \ref{KT1}. We will generalize this in Proposition \ref{hiloterms}.

\section{Finite monodromy results for $f$ of Belyi type}

In this section we will determine the pairs $(d,e)$ such that $\FF_f$ has finite monodromy, where $f(x)=x^d(x-1)^e$. We will assume without loss of generality that $d$ and $e$ are not both multiples of $p$ (otherwise, we could write $f(x)=g(x)^p$ for some $p$ and use Artin-Schreier reduction).

\begin{prop}
 Let $f(x)=x^d(x-1)^e$ and $g(x)=x^e(x-1)^d$. Then $\FF_f$ has finite monodromy if and only if $\FF_g$ does.
\end{prop}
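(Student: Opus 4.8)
The plan is to exploit the substitution $x\mapsto 1-x$, which interchanges the two roots $0$ and $1$ of $f$ and thereby swaps the multiplicities $d$ and $e$. Concretely, replacing $x$ by $1-x$ in $f(x)=x^d(x-1)^e$ gives
$$
f(1-x)=(1-x)^d(-x)^e=(-1)^{d+e}x^e(x-1)^d=(-1)^{d+e}g(x),
$$
so that $f$ and $g$ agree, after the affine change of variable $x\mapsto 1-x$, up to the harmless scalar $(-1)^{d+e}$. I would then transport this identity to the level of the exponential sums governed by Proposition \ref{criterion-algint}, which is the natural place to detect finiteness of the monodromy (and which, as recorded at the start of this section, makes no reference to the location of the roots $\alpha,\beta$, so working with the normalized polynomials $f(x)=x^d(x-1)^e$ and $g(x)=x^e(x-1)^d$ entails no loss of generality).

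Next I would perform the change of variable $x=1-u$ in the defining sum. Since $u$ ranges over $k_r$ as $x$ does, one computes
$$
\sum_{x\in k_r}\psi_r(sf(x)+tx)=\sum_{u\in k_r}\psi_r\bigl(s(-1)^{d+e}g(u)+t-tu\bigr)=\psi_r(t)\sum_{u\in k_r}\psi_r\bigl((-1)^{d+e}s\,g(u)+(-t)u\bigr).
$$
Writing $S_f(k_r;s,t)$ and $S_g(k_r;s,t)$ for the normalized sums of $f$ and of $g$ as in Proposition \ref{criterion-algint}, dividing by $q^{r/2}$ yields
$$
S_f(k_r;s,t)=\psi_r(t)\,S_g\bigl(k_r;(-1)^{d+e}s,-t\bigr).
$$
Here the prefactor $\psi_r(t)$ is a $p$-th root of unity, hence an algebraic integer which is a unit, and so does not affect integrality.

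Finally, I would conclude using Proposition \ref{criterion-algint}: $\FF_f$ has finite monodromy precisely when $S_f(k_r;s,t)$ is an algebraic integer for every $r\geq 1$ and every $(s,t)\in k_r^\times\times k_r^\times$, and likewise for $\FF_g$. The map $(s,t)\mapsto((-1)^{d+e}s,-t)$ is a bijection of $k_r^\times\times k_r^\times$ onto itself, so multiplying by the unit $\psi_r(t)$ shows that $S_f$ is an algebraic integer for all admissible $(s,t)$ if and only if $S_g$ is, giving the desired equivalence. I do not expect any genuine obstacle here; the only care needed is in tracking the signs $(-1)^{d+e}$ and $-t$ and in checking that the parameter change preserves the region $k_r^\times\times k_r^\times$. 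One could instead try to verify directly that the numerical criterion of Theorem \ref{Belyicrit} is invariant under $(d,e)\mapsto(e,d)$, but that route requires a less transparent change of variables in the $V$-function inequalities, so the substitution argument is cleaner.
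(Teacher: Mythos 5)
Your proposal is correct and is essentially the paper's own proof: the same substitution $x\mapsto 1-x$ yields the identity $S_f(k_r;s,t)=\psi_r(t)\,S_g(k_r;(-1)^{d+e}s,-t)$, and the conclusion follows from Proposition \ref{criterion-algint} since $\psi_r(t)$ is a root of unity and $(s,t)\mapsto((-1)^{d+e}s,-t)$ permutes $k_r^\times\times k_r^\times$. No gaps.
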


\begin{proof}
 By Proposition \ref{criterion-algint}, $\FF_f$ (respectively $\FF_g$) has finite monodromy if and only if $F(s,t):=\frac{1}{q^{r/2}}\sum_{x\in k_r}\psi_r(sf(x)+tx)$ (resp. $G(s,t):=\frac{1}{q^{r/2}}\sum_{x\in k_r}\psi_r(sg(x)+tx)$) is an algebraic integer for every $r\geq 1$ and $s,t\in k_r^\times$. But
 $$
 q^{r/2}F(s,t)=\sum_{x\in k_r}\psi_r(sf(x)+tx)=\sum_{x\in k_r}\psi_r(sx^d(x-1)^e+tx)=
 $$
 $$
 =\sum_{x\in k_r}\psi_r((-1)^esx^d(1-x)^e+tx)=
 $$
 $$
 =\sum_{x\in k_r}\psi_r((-1)^es(1-x)^dx^e+t(1-x))=\psi_r(t)\sum_{x\in k_r}\psi_r((-1)^{d+e}s(x-1)^dx^e-tx)=
 $$
 $$
 =\psi_r(t)\sum_{x\in k_r}\psi_r((-1)^{d+e}sg(x)-tx)=\psi_r(t)q^{r/2}G((-1)^{d+e}s,-t)
$$
so those two statements are equivalent, since $\psi_r(t)$ is a root of unity.
\end{proof}

Combining this with Proposition \ref{hiloterms} we conclude that, if $\FF_f$ has finite monodromy, then all of $d$, $e$ and $d+e$ are FM-exponents. We will start then by searching for all such pairs $(d,e)$. Since we are excluding the case where both $d$ and $e$ are multiples of $p$, we will assume that $d$ is prime to $p$.

\begin{rmk}\label{congruences} We note a few easy observations that will be useful in the proof of the theorem. If $p$ is prime then:
 \begin{itemize}
     \item $p^a\equiv 1$ mod $p-1$ for every $a\geq 0$.
     \item If $a=bc$ with $c\geq 1$ odd, then $\frac{p^a+1}{p^b+1}=p^{a-b}-p^{a-2b}+\cdots+p^{2b}-p^b+1\equiv 1$ mod $p-1$, and $\frac{p^a+1}{p^b+1}\equiv 1$ mod $p$.
     \item If $p\geq 3$, then $\frac{p^a+1}{2}\equiv 1$ (respectively $\frac{p+1}{2}$) mod $p-1$ if $a\geq 0$ is even (resp. odd), and $\frac{p^a+1}{2}\equiv \frac{p+1}{2}$ mod $p$ for every $a\geq 1$.
 \end{itemize}

 Consequently, all FM-exponents for $p$ are congruent to $1$, $2$ or $\frac{p+1}{2}$ modulo $p-1$.
\end{rmk}

\begin{lemma}\label{quotient}
Let $p$ be a prime, $m,n$ two non-negative integers and $a,b,c,d$ non-negative integers (assumed positive if $p=2$).
\begin{enumerate}
    \item If $p^m\frac{p^a+1}{p^b+1}=p^n\frac{p^c+1}{p^d+1}\in\mathbb Z_+$, then $m=n$ and either $a=b$ and $c=d$, or $a=c$ and $b=d$.
    \item $p^m\frac{p^a-1}{p^b+1}=p^n\frac{p^c-1}{p^d+1}\in\mathbb Z_+$, then $m=n$, $a=c$ and $b=d$.
    \item If $p^m\frac{p^a+1}{p^b+1}=p^n\frac{p^c-1}{p^d+1}\in\mathbb Z_+$, then $m=n$ and either $p=2$ and $a=b,c=2,d=1$ or $(a,b,c,d)=(3,1,4,2)$; or $p=3$ and $a=b,c=1,d=0$ or $(a,b,c,d)=(1,0,2,1)$.
    \item If $p^m(p^a+1)=p^n\frac{p^c+1}{p^d+1}\in\mathbb Z_+$, then $m=n$ and either $p=3$, $(a,c,d)=(0,1,0)$ or $p=2$, $(a,c,d)=(1,3,1)$.
    \item If $p^m(p^a+1)=p^n\frac{p^c-1}{p^d+1}\in\mathbb Z_+$, then $m=n$ and either $p=5$, $(a,c,d)=(0,1,0)$ or $p=3$, $(a,c,d)\in\{(0,2,1),(1,2,0)\}$ or $p=2$, $(a,c,d)\in\{(1,4,2),(2,4,1)\}$.
\end{enumerate}
\end{lemma}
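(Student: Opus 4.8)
The plan is to reduce each of the five statements to an equality of the ``quotient'' factors alone, and then to an identity between sums of powers of $p$ that I analyze through base-$p$ digits.

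First I would observe that every factor occurring in the lemma --- namely $\frac{p^a+1}{p^b+1}$, $\frac{p^a-1}{p^b+1}$ and $p^a+1$ --- is prime to $p$ in every admissible case. Indeed, the numerator and denominator are individually prime to $p$: for $p\geq 3$ a term $p^i\pm 1$ equals $2$ when $i=0$ and is $\equiv\pm 1\pmod p$ when $i\geq 1$, while for $p=2$ the hypothesis forces all exponents positive, so each term is odd; and a positive-integer quotient of two $p$-adic units is again a $p$-adic unit (note that $p^a-1=0$ and $p^0+1=p$ when $p=2$ only occur in cases that are excluded). Comparing $p$-adic valuations on the two sides of each equation therefore forces $m=n$ immediately, and after dividing by $p^m$ we are reduced to an equality of the prime-to-$p$ quotient factors.

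Next I would clear denominators, turning each reduced equality into an identity of the shape $\sum p^{i}=\sum p^{j}$ with integer exponents (three terms per side in (1) and (2), fewer in (4) and (5)); for instance $\frac{p^a+1}{p^b+1}=\frac{p^c+1}{p^d+1}$ becomes $p^{a+d}+p^a+p^d=p^{b+c}+p^b+p^c$. As soon as $p\geq 5$, at most three powers can coincide, so every coefficient that arises stays below $p$ and the base-$p$ expansion is unique; matching the two multisets of exponents --- using the a priori inequalities $a\geq b$, $c\geq d$ coming from the quotients being $\geq 1$, together with the integrality constraints ($b\mid a$ with $a/b$ odd, resp. $2b\mid a$) that force the quotients to be integers --- yields exactly the generic solutions of (1) and (2). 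In the mixed cases (3)--(5) the same expansion instead rules everything out: moving the $-1$ across produces a constant $+2$, so for $p\geq 5$ with all exponents positive the two sides have different lowest base-$p$ digit (a $2$ against a $0$), which is impossible.

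The remaining and most delicate work is the case of small primes, $p\in\{2,3\}$ in general and additionally $p=5$ in part (5), where coinciding or vanishing exponents generate carries (e.g. $1+1+1+2=5$, or $\frac{2^3+1}{2^1+1}=3=\frac{2^4-1}{2^2+1}$) and the clean digit-matching breaks down; these carries are precisely the source of the sporadic tuples in the statement. Here I would first pin down the residues of both sides modulo $p-1$ and modulo $p$ along the lines of Remark \ref{congruences}, taking care of the exponent-$0$ edge cases where those residues differ from the generic ones, and combine them with the crude size estimate $p^{a-b}\approx Q\approx p^{c-d}$ to bound all exponents into a finite range; a direct inspection then confirms that (1) and (2) acquire no new solutions, while (3)--(5) produce exactly the listed tuples. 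I expect the \emph{main obstacle} to be precisely this last step --- proving the small-prime enumeration is exhaustive. The congruence shortcuts that work uniformly for $p\geq 5$ degenerate at $p\in\{2,3,5\}$ and at the edge exponent $0$, so one cannot simply quote them; one must obtain explicit exponent bounds and then check by hand that the base-$2$ and base-$3$ carries generate no coincidences beyond those recorded, which is where the real care lies.
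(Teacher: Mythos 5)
Your reduction to $m=n$ via $p$-adic valuations and your digit-uniqueness argument for $p\geq 5$ both match facts the paper uses, but the core of the lemma is left unproven. The decisive gap is your treatment of the small primes: you propose to ``bound all exponents into a finite range'' using congruences and the size estimate $p^{a-b}\approx p^{c-d}$, and then finish by ``direct inspection''. This cannot work as stated, because the solution set is not finite: parts (1)--(3) contain infinite families (e.g.\ $a=b$ and $c=d$ in (1), or $a=b$ arbitrary together with $(c,d)=(2,1)$ for $p=2$ in (3)), so no bound on the individual exponents exists, and the size estimate only controls the differences $a-b$ and $c-d$, never $a,b,c,d$ themselves. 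What is needed is a structural argument valid for unbounded exponents; moreover, for $p=2,3$ base-$p$ digit uniqueness genuinely fails because of carries, so even after isolating the degenerate subfamilies you have no mechanism to exclude further sporadic solutions with large exponents. Acknowledging this as ``the main obstacle'' does not discharge it.

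The paper supplies exactly the missing mechanism, and it is both shorter and uniform in $p$. For (1), subtract $1$ from both sides: $\frac{p^a+1}{p^b+1}-1=p^b\frac{p^{a-b}-1}{p^b+1}$, and the second factor is a $p$-adic unit, so the valuation of each side is exactly $b$ (resp.\ $d$) whenever it is nonzero. Hence either both sides vanish (giving $a=b$, $c=d$) or $b=d$ and then $a=c$ --- with no case split on $p$ at all. Part (2) is the same trick with $+1$ added. Parts (3)--(5) then combine short congruence arguments mod $p$ and mod $p-1$ (which force $p\in\{2,3,5\}$ and eliminate most zero-exponent configurations) with a recursive reduction back to (1) and (2) via the same $\pm 1$ manipulation; this is how the sporadic tuples such as $(a,b,c,d)=(3,1,4,2)$ for $p=2$ are pinned down with finite work. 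To repair your write-up, replace the ``finite range plus inspection'' step by this valuation argument (or an equivalent structural one); without it, parts (1)--(3) for $p=2,3$ remain conjectural.
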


\begin{proof}
In every case, $m=n$ follows by comparing the $p$-adic valuations on both sides.
\begin{enumerate}
\item Substracting 1, we get $p^b\frac{p^{a-b}-1}{p^b+1}=p^d\frac{p^{c-d}-1}{p^d+1}$. These are 0 if and only if $a=b$ and $c=d$. Otherwise, comparing the $p$-adic valuations we get $b=d$, and then $p^a+1=p^c+1$ so $a=c$.
\item Note that $a>b$ and $c>d$. Adding 1, we get $p^b\frac{p^{a-b}+1}{p^b+1}=p^d\frac{p^{c-d}+1}{p^d+1}$, so compating the $p$-adic valuations we have $b=d$, and then $p^a-1=p^c-1$ so $a=c$.
\item Modulo $p$ we get $\{1,\frac{1}{2}\}\equiv \{-1,-\frac{1}{2}\}$, so $p=2$ or $p=3$ and $bd=0$. Since $\frac{p^c-1}{p^d+1}\in\mathbb Z$, $c>d$. If $a=b$ then $p^c=p^d+2$, so either $p=2$ and $(c,d)=(2,1)$ or $p=3$ and $(c,d)=(1,0)$. Assume $a>b$. If $p=2$, then modulo 4 we get $2^b+2^d\equiv 2$, so either $b=1$ or $d=1$. If $b=1$, adding 1 we get $2^2\frac{2^{a-2}+1}{3}=2^d\frac{2^{c-d}+1}{2^d+1}$, so by (1) we get $d=2,a=3,c=4$. If $d=1$, substracting 1 we get $2^b\frac{2^{a-b}-1}{2^b+1}=2^2\frac{2^{c-2}-1}{3}$, which is not possible by (2).

If $p=3$ and $b=0$ then $3\frac{3^{a-1}+1}{2}=3^d\frac{3^{c-d}+1}{3^d+1}$, so $a=d=1,c=2$ by (1). If $d=0$ then $3^b\frac{3^{a-b}-1}{3^b+1}=3\frac{3^{c-1}-1}{2}$, which is not possible by (2).

\item Substracting 1 we get $p^a=p^d\frac{p^{c-d}-1}{p^d+1}$, so $a=d$ and $p^{c-d}=p^d+2$, which is only possible for $p=2,d=1,c=3$ or $p=3,d=0,c=1$.
\item We have $p^c=p^{a+d}+p^a+p^d+2$, and modulo $p-1$ this is $1\equiv 5$, so $p=2,3$ or $5$. If $p=5$ then the five summands must coincide, so $a=d=0$. If $p=3$ then either $a=0,d=1$ or $a=1,d=0$. If $p=2$ then either $a=1,d=2$ or $a=2,d=1$.
\end{enumerate}
\end{proof}

\begin{thm}\label{FMpairs}
 Let $A,B$ be positive integers such that $A$ is prime to $p$ and all of $A$, $B$, $A+B$ are FM-exponents. Then $(A,B)$ is one of the following pairs (or their reversed pairs):
 \begin{enumerate}
  \item $(\frac{p^a+1}{p^b+1},p^b\frac{p^a+1}{p^b+1})$ for $b\geq1$ and $a\geq b$ an odd multiple of $b$.
  \item $(\frac{p^{a+2b}+1}{p^b+1},p^b\frac{p^a+1}{p^b+1})$ for $b\geq1$ and $a\geq b$ an odd multiple of $b$.
  \item $p\geq 3$, $(\frac{p^a+1}{2},\frac{p^a+1}{2})$ for $a\geq 0$.
  \item $p=2$, $(3,10)$, $(5,6)$, $(5,8)$, $(5,17)$, $(5,52)$, $(9,2)$, $(9,4)$, $(9,11)$, $(9,13)$, $(9,17)$, $(9,34)$, $(9,43)$, $(9,48)$, $(11,2)$, $(11,13)$, $(11,57)$, $(13,44)$, $(13,228)$, $(17,26)$, $(17,40)$, $(33,10)$, $(33,24)$, $(33,172)$, $(33,208)$, $(65,176)$, $(171,34)$ or $(205,36)$.
  \item $p=2$, $(2^a+1,2^a+1)$ for $a\geq 1$.
  \item $p=2$, $(\frac{2^a+1}{2^b+1},\frac{2^a+1}{2^b+1})$ for $b\geq 1$, $a$ an odd multiple of $b$.
  \item $p=2$, $(1,2^a+1)$ for $a\geq 1$.
  \item $p=2$, $(2^a+1,2^a)$ for $a\geq 1$.
  \item $p=2$, $(3,2^a+1)$ for $a\geq 1$.
  \item $p=2$, $(2^a+1,3\cdot 2^a)$ for $a\geq 1$.
  \item $p=2$, $(2^a+1,\frac{2^{3a}+1}{2^a+1})$ for $a\geq1$.
  \item $p=2$, $(\frac{2^{3a}+1}{2^a+1},2^a(2^a+1))$ for $a\geq1$.
  \item $p=2$, $(2^a+1,\frac{2^a+1}{3})$ for odd $a\geq1$.
  \item $p=2$, $(1,\frac{2^a+1}{3})$ for odd $a\geq 1$.
  \item $p=2$, $(\frac{2^a+1}{3},2^a)$ for odd $a\geq1$.
  \item $p=2,$ $(3,\frac{2^{2a}+1}{5})$ for odd $a\geq 1$.
  \item $p=2$, $(\frac{2^{2a}+1}{5},3\cdot2^{2a})$ for odd $a\geq 1$.
  \item $p=2$, $(5,\frac{2^a+1}{3})$ for odd $a\geq1$.
  \item $p=2$, $(\frac{2^a+1}{3},5\cdot2^a)$ for odd $a\geq 1$.
  \item $p=2$, $(\frac{2^{3a}+1}{9},\frac{2^{3a}+1}{3})$ for odd $a\geq1$.
  \item $p=2$, $(\frac{2^{3a}+1}{9},2\frac{2^{3a}+1}{9})$ for odd $a\geq1$.
  \item $p=3$, $(2,5)$, $(4,3)$, $(4,10)$, $(5,7)$, $(10,63)$, $(28,45)$ or $(61,12)$.
  \item $p=3$, $(2,3^a+1)$ for $a\geq 0$.
  \item $p=3$, $(3^a+1,2\cdot 3^a)$ for $a\geq 0$.
  \item $p=3$, $(3^a+1,\frac{3^a+1}{2})$ for $a\geq 0$.
  \item $p=3$, $(1,\frac{3^a+1}{2})$ for $a\geq 0$.
  \item $p=3$, $(\frac{3^a+1}{2},3^a)$ for $a\geq 0$.
  \item $p=3$, $(4,\frac{3^a+1}{2})$ for $a\geq 0$.
  \item $p=3$, $(\frac{3^a+1}{2},4\cdot 3^a)$ for $a\geq 0$.
  \item $p=3$, $(\frac{3^a+1}{2},\frac{3^a+1}{4})$ for odd $a\geq 1$.
  \item $p=3$, $(\frac{3^a+1}{4},\frac{3^a+1}{4})$ for odd $a\geq 1$.
  \item $p=3$, $(2,\frac{3^a+1}{4})$ for odd $a\geq 1$.
  \item $p=3$, $(\frac{3^a+1}{4},2\cdot3^a)$ for odd $a\geq 1$.
  \item $p=5$, $(1,6)$, $(2,5)$, $(3,7)$, $(6,7)$ or $(6,15)$.
  \item $p=5$, $(2,\frac{5^a+1}{2})$ for $a\geq0$.
  \item $p=5$, $(\frac{5^a+1}{2},2\cdot 5^a)$ for $a\geq0$.
  \item $p=7$, $(2,2)$.
 \end{enumerate}
\end{thm}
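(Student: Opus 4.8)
The plan is to treat this as a pure enumeration problem, driven by the explicit list of FM-exponents in Theorem~\ref{KT1}, the congruence data in Remark~\ref{congruences}, and the coincidence statements in Lemma~\ref{quotient}. Write $B=p^mB_{\hat p}$; since $p^m\equiv 1\pmod{p-1}$, each of $A$, $B$, $A+B$ is congruent modulo $p-1$ to the prime-to-$p$ part of an FM-exponent, hence to one of $1$, $2$, $\frac{p+1}{2}$. First I would write down the addition table of $\{1,2,\frac{p+1}{2}\}$ modulo $p-1$ and record which triples $(r_A,r_B,r_{A+B})$ with $r_A+r_B\equiv r_{A+B}$ are admissible. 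For every $p\geq 11$ the residues $1,2,\frac{p+1}{2}$ are distinct and the only admissible patterns are $(1,1,2)$ and $(\frac{p+1}{2},\frac{p+1}{2},2)$; both force $A+B$ to have residue $2$, hence (as type (I) is the only residue-$2$ family when $p\geq 11$) to be of the form $A+B=p^c+1$, and both pin $A$ and $B_{\hat p}$ to the matching types. The primes $p\in\{2,3,5,7\}$ admit extra patterns — for $p=2$ the congruence is vacuous, for $p=3$ it only records parity — and I would set these aside for separate treatment.

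For the two generic patterns I would substitute the parametric forms and solve $A+B=p^c+1$. In the pattern $(\frac{p+1}{2},\frac{p+1}{2},2)$ one gets $\frac{p^a+1}{2}+\frac{p^{a'}+1}{2}=p^c+1$, so $p^a+p^{a'}=2p^c$ forces $a=a'$ and yields family (3). In the pattern $(1,1,2)$, $A$ and $B_{\hat p}$ are each of type (II) with even exponent or of type (III); examining the $p$-adic valuation of $p^c+1-A$ rules out genuine type-(II)/type-(III) mixing, and when both are of type (III) the equation becomes, after clearing the denominator $p^b+1$, an equality of the shape treated in Lemma~\ref{quotient}(1). Its dichotomy ``$a=b,c=d$ or $a=c,b=d$'' corresponds precisely to the two sub-solutions $A+B=p^a+1$ and $A+B=p^{a+b}+1$, producing families (1) and (2). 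This disposes of all $p\geq 11$.

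The bulk of the work is the small primes, where the residue filter is weak or absent. Here I would run through every type-combination for $(A,B,A+B)$ left admissible by the residues (all of them when $p=2$), write each quantity as a power of $p$ times one of $p^a\pm 1$, $\frac{p^a\pm 1}{p^b+1}$, and reduce $A+B=(A+B)$ to one of the five equation shapes of Lemma~\ref{quotient}, using the finer mod-$p$ congruences of Remark~\ref{congruences} to discard incompatible combinations beforehand and the swap symmetry $(A,B)\leftrightarrow(B,A)$ (legitimate by the Belyi swap Proposition) to halve the casework. Each part of the lemma contributes either a one-parameter infinite solution or a finite list of sporadic tuples: the infinite solutions assemble into families (5)--(21) for $p=2$, (23)--(33) for $p=3$, and (35)--(36) for $p=5$, while the sporadic tuples — for instance the $p=2$ datum $(a,b,c,d)=(3,1,4,2)$ in part (3), or the $p=5$ solution in part (5) — give the isolated pairs collected in (4), (22), (34), together with the lone pair (37) for $p=7$ coming from the extra pattern $(2,2,\frac{p+1}{2})$.

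The main obstacle I anticipate is the prime $p=2$: because the mod-$(p-1)$ congruence is vacuous, no type-combination can be pruned at the outset, so every pairing of FM-exponent forms must be confronted and Lemma~\ref{quotient} applied in all five of its shapes to separate the infinitely many family members from the finite set of sporadic pairs. The delicate point is completeness of the sporadic list (4): one must be sure no admissible small solution has been overlooked, and I would close this by combining the finiteness of the lemma's sporadic tuples with an explicit bounded check on the remaining ranges.
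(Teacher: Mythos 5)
Your overall strategy coincides with the paper's: classify $A$, $B_{\hat p}$ and $(A+B)_{\hat p}$ by their type in Theorem~\ref{KT1}, filter with the congruences of Remark~\ref{congruences}, and resolve each type combination using Lemma~\ref{quotient} together with further congruences. The residue filter you set up for $p\geq 11$ (only the patterns $(1,1,2)$ and $(\frac{p+1}{2},\frac{p+1}{2},2)$ survive modulo $p-1$) is correct and is a tidy way to package what the paper does case by case for large $p$. Two points there need repair, though. First, your account of the pattern $(1,1,2)$ omits the subcase where $A$ and $B_{\hat p}$ are both of type (II) with even exponent; that subcase is exactly what produces the even-exponent half of family (3), so your stated case split would miss pairs such as $(\frac{p^2+1}{2},\frac{p^2+1}{2})$. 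Second, the claim that the type-(III)/type-(III) case ``becomes, after clearing the denominator $p^b+1$, an equality of the shape treated in Lemma~\ref{quotient}(1)'' is not right as stated: the relation $\frac{p^a+1}{p^b+1}+p^m\frac{p^{a'}+1}{p^{b'}+1}=p^c+1$ is a three-term identity involving two unrelated denominators, not a two-term equality of the lemma's shape. In the paper one must subtract $1$, compare $p$-adic valuations twice to force first the $p$-power prefactors and then the denominators to agree, and only then solve an exponent-matching equation $p^{a-2b}+p^{d-b}=p^{g-2b}+p^{g-b}$; families (1) and (2) drop out of that final dichotomy, not out of Lemma~\ref{quotient}(1).

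The genuine gap is the small primes. For $p\in\{2,3,5,7\}$ --- which is where all twenty-seven sporadic pairs of item (4), the sporadic lists (22) and (34), the pair (37), and most of the infinite families (5)--(21), (23)--(33), (35)--(36) live, and which occupies essentially the whole of the paper's proof --- you only describe the plan (``run through every type-combination \dots\ reduce to one of the five equation shapes of Lemma~\ref{quotient}'') and then assert the answer. That description is not faithful to what actually has to happen: many of the paper's subcases do not reduce to the lemma at all, but require ad hoc arguments (congruences modulo $4$, $8$, $16$ and $9$, repeated ``add/subtract $1$'' manipulations, and direct resolution of equations such as $2^{e-3}=4+3\cdot 2^{a-3}$ or $2^{e-f-2}=2^f+2^{f-1}+2^{f-2}+2$), and it is precisely these arguments that generate pairs like $(9,48)$, $(17,40)$, $(33,208)$, $(171,34)$, $(205,36)$. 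Likewise, your proposed completeness check for list (4) --- ``an explicit bounded check on the remaining ranges'' --- has no content until an explicit bound is exhibited, and none is; the finiteness in Lemma~\ref{quotient} concerns only its own five equation shapes, so it does not by itself bound the solutions of the three-term equations that actually arise. In short, the skeleton matches the paper, but the proof of the theorem for $p\leq 7$, i.e.\ the bulk of the statement, is missing.
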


\begin{proof}
 We study the possible cases depending on the types of $A$ and $B$ according to the classification of FM-exponents given in Theorem \ref{KT1}. For some $p$'s there is non-empty intersection between different types, so:
 \begin{itemize}
    \item  For $p=2$ we consider $3=2+1$ to be of type 1, even though it is also of type $3$ as $\frac{2^3+1}{2+1}$. Therefore, for exponents of type 3, $\frac{2^a+1}{2^b+1}$, we will assume $a\geq 5$ if $b=1$.
    \item For $p=3$ we consider $2=1+1$ to be of type 1, even though it is also of type 2 as $\frac{3+1}{2}$. Therefore, for exponents of type 2, $\frac{3^a+1}{2}$, we will assume $a\geq 2$.
 \end{itemize}

 \bigskip
{\bf Type (1,1):} Here $A=p^a+1$, $B=p^b(p^c+1)$, $A+B=1+p^a+p^b+p^{b+c}$. Depending of the type of $A+B$, we have:

\begin{itemize}
   \item $1+p^a+p^b+p^{b+c}=p^d(p^e+1)$, which modulo $p-1$ is $4\equiv 2$, so $p\leq 3$. If $p=2$ then $a,c,e>0$ and modulo 2 we get $1+2^b\equiv 2^d$, so either $b=0$, in which case $2+2^a+2^c=2^d+2^{d+e}$ so $a=1$ or $c=1$ (and we get the pairs $\mathbf{(3,2^c+1)_2}$ and their reversed pairs) or $a=c$ (and we get the pairs $\mathbf{(2^a+1,2^a+1)_2}$); or or $d=0$, and then $2^a+2^b+2^{b+c}=2^e$, so $a=b,c=1$ (and we get the pairs $\mathbf{(2^a+1,3\cdot 2^a)_2}$).
  
  If $p=3$ then three of the left hand side exponents must coincide, so either $a=b=0$ (and we get the pairs $\mathbf{(2,3^c+1)_3}$), $b=c=0$ (and we get the pairs $\mathbf{(3^a+1,2)_3}$) or $a=b,c=0$ (and we get the pairs $\mathbf{(3^a+1,2\cdot 3^a)_3}$).
  
  \item $p\geq 3$ and $1+p^a+p^b+p^{b+c}=p^d\frac{p^e+1}{2}$, so $2+2p^a+2p^b+2p^{b+c}=p^d+p^{d+e}$ and modulo $p-1$ we get $8\equiv 2$, so $p=3$ or $7$. If $p=3$ then we may assume $e>0$. If $d>0$ then the sum is a multiple of $3$, so either $a=b=0$ or or $b=c=0$ and we get pairs whose sum is of type 1. So $d=0$ and $1+2\cdot 3^a+2\cdot 3^b+2\cdot 3^{b+c}=3^e$, which implies $a=0$ (and then $b=1$, $c=1$ and we get the pair $\mathbf{(2,12)_3}$, part of \#29) or $b=0$ (and then either $a=1$, $c=2$ and we get the pair $\mathbf{(4,10)_3}$ or $a=2$, $c=1$ and we get the reversed pair $\mathbf{(10,4)_3}$). If $p=7$ then modulo 7 we get $1+7^a+7^b+7^{b+c}\equiv 4\cdot 7^d$, so $a=b=c=d=0$ and we get the pair $\mathbf{(2,2)_7}$.
  
  \item $1+p^a+p^b+p^{b+c}=p^d\frac{p^e+1}{p^f+1}$, which modulo $p-1$ is $4\equiv 1$, so $p=2$, and then $a,c>0$. By parity, exactly one of $b,d$ is non-zero. If $b=0$ then $1+2^{a-1}+2^{c-1}=2^{d-1}\frac{2^e+1}{2^f+1}$. We may assume $a,c>1$, as otherwise the sum is of type 1. Then $d=1$ by parity, and then $2^{a-1}+2^{c-1}=2^f\frac{2^{e-f}-1}{2^f+1}$. By lemma \ref{quotient} this is only possible for $f=1,a=2,c=4$ (and we get the pair $\mathbf{(5,17)_2}$), or $f=2,a=3,c=4$ (and we get the pair $\mathbf{(9,17)_2}$), or interchanging the values of $a$ and $c$ (which gives the reversed pairs).

  If $d=0$ then $2^a+2^b+2^{b+c}=2^f\frac{2^{e-f}-1}{2^f+1}$. Comparing the 2-adic valuations, either $a=b$, $a=f<b$ or $b=f<a$. In the first case, we may assume $c>1$ (otherwise the sum is of type 1). Then $2^{a+1}(1+2^{c-1})=2^f\frac{2^{e-f}-1}{2^f+1}$, so by lemma \ref{quotient} $f=c=2,a=b=1$ and we get the pair $\mathbf{(3,10)_2}$. If $a=f<b$ then $2^{b-a}(1+2^c)=2\frac{2^{e-f-1}-2^{f-1}-1}{2^f+1}$. If $f=1$ this is $2^2\frac{2^{e-3}-1}{3}$, so $b=a+2=3$, $3\cdot 2^c+4=2^{e-3}$, $c=2$ and we get the pair $\mathbf{(3,40)_2}$. If $f>1$ then $b=a+1$ and $2^c(2^f+1)=2(2^{e-f-2}-2^{f-2}-2^{f-1}-1)$. If $f=2$ this is $5\cdot 2^c=2^3(2^{e-6}-1)$, which does not have a solution. So $f>2$, and then $c=1$ and $2^{e-f-2}=2^f+2^{f-1}+2^{f-2}+2$, which implies $a=f=3,b=4$ and we get the pair $\mathbf{(9,48)_2}$.
  
  Finally, if $b=f<a$ then $2^{a-b}+2^c=2\frac{2^{e-f-1}-2^{f-1}-1}{2^f+1}$. If $f=1$ this is $3(2^{a-1}+2^c)=4(2^{e-3}-1)$, so either $c=a-1=1$ (and we get the pair $\mathbf{(5,6)_2}$), or $c=2<a-1$ and $2^{e-3}=4+3\cdot 2^{a-3}$, so $a=5$ (and we get the pair $\mathbf{(33,10)_2}$) or $a=3<c+1$ and $2^{e-3}=4+3\cdot 2^{c-2}$, so $c=4$ (and we get the pair $\mathbf{(9,34)_2}$). If $f>1$ then $2^{a-b-1}+2^{c-1}=\frac{2^{e-f-1}-2^{f-1}-1}{2^f+1}$ is odd, so either $a-b=1$ or $c=1$. If $a-b=1$ then $2^{c-1}(2^f+1)=2(2^{e-f-2}-2^{f-1}-2^{f-2}-1)$. If $f=2$ this is $5\cdot 2^{c-1}=2^3(2^{e-6}-1)$, which does not have a solution. So $f>2$ and then $c=2$ and $2^{e-f-2}=2^f+2^{f-1}+2^{f-2}+2$, which implies $f=b=3,a=4$ and gives the pair $\mathbf{(17,40)_2}$. If $c=1$ then $2^{a-b-1}(2^f+1)=2(2^{e-f-2}-2^{f-1}-2^{f-2}-1)$, which again does not have solution for $f=2$, so $f>2$, $a-b-1=1$ and $2^{e-f-2}=2^f+2^{f-1}+2^{f-2}+2$, which implies $f=b=3$, $a=5$ and gives the pair $\mathbf{(33,24)_2}$.

  \item $p=5$ and $1+5^a+5^b+5^{b+c}=7\cdot 5^d$, which is not possible modulo $4$.
\end{itemize}

{\bf Type (1,2):} $p\geq 3$, $A=p^a+1$, $B=p^b\frac{p^c+1}{2}$, $A+B=1+p^a+\frac{p^b+p^{b+c}}{2}$. Here $A\equiv 2$ and $B\equiv 1$ or $\frac{p+1}{2}$, so $A+B\equiv 3$ or $\frac{p+5}{2}$ modulo $p-1$. Depending of the type of $A+B$, we have:

\begin{itemize}
\item $1+p^a+\frac{p^b+p^{b+c}}{2}=p^d(p^e+1)$. This is $\equiv 2$ modulo $p-1$, which is only possible for $p=3$, and then $2+2\cdot 3^a+3^b+3^{b+c}=2\cdot 3^d+2\cdot 3^{d+e}$ where we may assume $c\geq 2$. If $d=0$ then $2\cdot 3^a+3^b+3^{b+c}=2\cdot 3^e$ so $a=b,c=1$, which contradicts $c\geq 2$. So $d>0$ and then modulo 3 we must have $a>0,b=0$, and $1+2\cdot 3^{a-1}+3^{c-1}=2\cdot 3^{d-1}+2\cdot 3^{d+e-1}$. Since $c>1$ this is not possible for $e>0$, so $e=0$ and $1+2\cdot 3^{a-1}+3^{c-1}=3^{d-1}+3^d$, which is only possible if $a=1$ (and then $c=3$ and we get the pair $\mathbf{(4,14)_3}$, part of \#28) or $a=c=1$, which contradicts the assumption that $c\geq 2$. 

\item $1+p^a+\frac{p^b+p^{b+c}}{2}=p^d\frac{p^e+1}{2}$. This is $\equiv 1$ or $\frac{p+1}{2}$ modulo $p-1$, which is only possible for $p=3$ or $5$. Modulo $p$ we get $2+2p^a+p^b=p^d$ so either $p=3,a=d=0$ or $b=0,d>0$ (and either $p=3$ or $a=0$). In the first case we get $3+3^b+3^{b+c}=3^e$, which is not possible if $c>0$. In the second case we have $3+2p^a+p^c=p^d+p^{d+e}$. If $p=3$ then either $a=1$ (and we get the pairs $\mathbf{(4,\frac{3^c+1}{2})_3}$) or $a=c$ (and we get the pairs $\mathbf{(3^a+1,\frac{3^a+1}{2})_3}$). If $p=5$ then $a=0$ and we get the pairs $\mathbf{(2,\frac{5^c+1}{2})_5}$.

\item $1+p^a+\frac{p^b+p^{b+c}}{2}=p^d\frac{p^e+1}{p^f+1}$. This is $\equiv 1$ modulo $p-1$, which is only possible for $p=3$ or $5$. Modulo $p$ we get either $b=0$ or $d=0$. If $d=0$ then $p^a+\frac{p^b+p^{b+c}}{2}=p^f\frac{p^{e-f}-1}{p^f+1}$ which has $p$-adic valuation $f$ and residue $-1$, which implies $a\geq b$. If $a=b$ then $p^a\frac{p^c+3}{2}=p^f\frac{p^{e-f}-1}{p^f+1}$. For $p=5$ this implies $a=f$ and $5+5^c+3\cdot 5^f+5^{c+f}=2\cdot 5^{e-f}$, only possible if $c=f=1$, which gives the pair $\mathbf{(6,15)_5}$. For $p=3$ the valuation of the LHS is $a+1$, so $f=a+1$ and $3^{a+c}+3^{a+1}+3^{c-1}+3=2\cdot 3^{e-f}$. We may assume $c>1$, and then this forces $a+1=c-1=1$ and we get the pair $\mathbf{(2,5)_3}$. If $a>b$ then the LHS has valuation $b$ and residue $\frac{1}{2}$, so $p=3$, $b=f>0$ and $2\cdot 3^{a-b}+3^c+2\cdot 3^a+3^b+3^{b+c}+3=2\cdot 3^{e-f}$ and, in particular, $e-f\geq 2$. Modulo 9 this is $2\cdot 3^{a-b}+3^b+3\equiv 2\cdot 3^{e-f}\equiv 0$, only possible if $a-b=1$, in which case $3^2+3^c+2\cdot 3^{b+1}+3^b+3^{b+c}=2\cdot 3^{e-f}$, and then $b=c=2$ and we get the pair $\mathbf{(28,45)_3}$.

If $d>0$ then $b=0$ and $\frac{3+2p^a+p^c}{2}=p^d\frac{p^e+1}{p^f+1}$. If $p=5$, modulo 5 we get $a=0$, and the LHS has 5-adic valuation $1$ so $d=1$. The 5-adic residue on the RHS is 1 which implies $c=1$ and we get the pair $\mathbf{(2,3)_5}$, part of \#35. If $p=3$ then $a>0$, and the 3-adic residue of $3+2\cdot 3^a+3^c$ must be $2$. Assuming $c>1$, this can only happen if $a=1$ and $c=2$, and we get the pair $\mathbf{(4,5)_3}$, part of \#28.

\item $p=5$ and $1+5^a+\frac{5^b+5^{b+c}}{2}=7\cdot 5^d$, so $2+2\cdot 5^a+5^b+5^{b+c}=14\cdot 5^d$. Modulo 5 this becomes $2+2\cdot 5^a+5^b=4\cdot 5^d$, which implies either $a=d=0$, so $5^b+5^{b+c}=10$, so $b=1,c=0$, which contradicts $c>0$; or $a=b=0$, and then $5+5^c=14\cdot 5^d$, which is not possible.
\end{itemize}

{\bf Type (1,3):} Here $A=p^a+1$, $B=p^b\frac{p^c+1}{p^d+1}$, $A+B=1+p^a+\frac{p^b+p^{b+c}}{p^d+1}$. Depending of the type of $A+B$, we have:

\begin{itemize}
\item $1+p^a+\frac{p^b+p^{b+c}}{p^d+1}=p^e(p^f+1)$. Modulo $p-1$ this is $3\equiv 2$, so it is only possible for $p=2$, and we may assume $a,f>0$. By parity, either $b=0$ or $e=0$. If $b=0$ then $2^{a-1}+\frac{2^{c-1}+2^{d-1}+1}{2^d+1}=2^{e-1}(1+2^f)$. If $c=d$ we get the pairs $\mathbf{(2^a+1,1)_2}$, so assume $c\geq 3d$. Let $d=1$ first (so $c\geq 5$), then $2^{a-1}+2\frac{2^{c-2}+1}{3}=2^{e-1}(1+2^f)$, so either $a=e=1$ (and then $3\cdot 2^{f-1}=2^{c-2}+1$, not possible for $c\geq 5$) or $a=2$ (and then $2^2\frac{2^{c-4}+1}{3}=2^{e-2}(1+2^f)$, so $e=4,f=1,c=7$ and we get the pair $\mathbf{(5,43)_2}$, part of \#18) or $e=2$ (and then $2^{a-3}+\frac{2^{c-3}-1}{3}=2^{f-1}$, so $a=3$ by parity and $2^{c-3}+2=2^f+ 2^{f-1}$, which is only possible for $c=3$ (which we exclude) or $c=5$, which gives the pair $\mathbf{(9,11)_2}$).

So assume $d>1$ from now on, then $a=1$ or $e=1$ by parity. In the first case we have $\frac{2^{c-2}+2^{d-2}+2^{d-1}+1}{2^d+1}=2^{e-2}(1+2^f)$. If $d=2$ this is $2^2\frac{2^{c-4}+1}{5}=2^{e-2}(1+2^f)$, so $2^{c-4}=2^2+2^f+2^{f+2}$, which is not possible. So $d>2$ and then $e=2$ by parity and $2^{d-2}\frac{2^{c-d}-1}{2^d+1}=2^f$, which is not possible for $d>1$ by lemma \ref{quotient}. If $e=1$ then $2^{a-1}+2^{d-1}\frac{2^{c-d}-1}{2^d+1}=2^f$, and then $a-1<f$, so comparing the 2-adic valuations we get $a=d$ and $2^{2a-1}\frac{2^{c-2d}+1}{2^d+1}=2^f$, so $c=3d$ and we get the pairs $\mathbf{(2^a+1,\frac{2^{3a}+1}{2^a+1})_2}$.

If $e=0$ then $2^a+2^b\frac{2^c+1}{2^d+1}=2^f$, so $a<f$ and comparing the $2$-adic valuations we get $a=b$ and $2^c+2^d+2=2^{f-a}+2^{f-a+d}$. Since we are excluding $d=1,c=3$ from type 3, this is only possible if $c=d$, which gives the pairs $\mathbf{(2^a+1,2^a)_2}$.

\item $p\geq 3$ and $1+p^a+\frac{p^b+p^{b+c}}{p^d+1}=p^e\frac{p^f+1}{2}$. Modulo $p-1$ this is $3\equiv 1$ or $\frac{p+1}{2}$, so $p=3$ or $5$. Modulo $p$ we have $2+2p^a+2p^b\equiv p^e$. For $p=5$ the only option is $a=b=e=0$, and then $5+3\cdot 5^d+2\cdot 5^c = 5^f+5^{d+f}$, so $c=d$ and we get the pair $\mathbf{(2,1)_5}$, part of \#35. For $p=3$, $f>1$ and exactly two of $a,b,e$ must be 0. If $a=e=0$ then $3^b\frac{3^c+1}{3^d+1}=3\frac{3^{f-1}-1}{2}$, so $b=1,d=c$ by lemma \ref{quotient} and we get the pair $\mathbf{(2,3)_3}$, part of \#27. If $b=e=0$ then $3^{a-1}+\frac{3^{c-1}-3^{d-1}}{3^d+1}=\frac{3^{f-1}-1}{2}$, which modulo 3 is $3^{a-1}+3^{c-1}-3^{d-1}\equiv 1$, so $a=1$ and $3^{d-1}\frac{3^{c-d}-1}{3^d+1}=3\frac{3^{f-2}-1}{2}$, so by lemma \ref{quotient} $c=d,f=2$ and we get the pair $\mathbf{(4,1)_3}$, part of \#28. Finally, if $a=b=0$ then $3+3^d\frac{3^{c-d}-1}{3^d+1}=3^e\frac{3^f+1}{2}$. If $d>1$ then modulo 9 we get $3\equiv 5\cdot 3^e$, which is not possible. So $d=1$ and $\frac{3^{c-1}+3}{4}=3^{e-1}\frac{3^f+1}{2}$, and then $3+3^{c-1}=2\cdot 3^{e-1}+2\cdot 3^{e+f-1}$, which is not possible if $f>0$.

\item $1+p^a+\frac{p^b+p^{b+c}}{p^d+1}=p^e\frac{p^f+1}{p^g+1}$. Modulo $p-1$ this is $3\equiv 1$, so $p=2$ or $3$. If $p=3$, modulo 3 we have $1+3^a+3^b=3^e$, so either $a=b=0$ or $e=0,a,b>0$. If $a=b=0$ then $\frac{2\cdot 3^{d-1}+3^{c-1}+1}{3^d+1}=3^{e-1}\frac{3^f+1}{3^g+1}$. If $d=1$ we get the pairs $\mathbf{(2,\frac{3^c+1}{4})_3}$, otherwise $e=1$ and $3^{d-1}\frac{3^{c-d}-1}{3^d+1}=3^g\frac{3^{f-g}-1}{3^g+1}$, impossible by lemma \ref{quotient}. If $e=0$ then $3^a+3^b\frac{3^c+1}{3^d+1}=3^g\frac{3^{f-g}-1}{3^g+1}$, where the RHS has 3-adic residue $-1$. Then we must have $a=b=g$, and $1+\frac{3^c+1}{3^d+1}=\frac{3^{f-g}-1}{3^g+1}$. If $c=d$ then $2\cdot 3^g+3=3^{f-g}$, so $g=1$ and we get the pair $\mathbf{(4,3)_3}$. Otherwise, $\frac{3^c+2\cdot 3^d+3}{3^d+1}=3^g\frac{3^{f-2g}+1}{3^g+1}$. If $d=1<c$ then the LHS is $3^2\frac{3^{c-2}+1}{4}$, so $g=2,c=3$ by lemma \ref{quotient} and we get the pair $\mathbf{(10,63)_3}$. If $c>d>1$ then $g=1$ and $3^{d-1}\frac{3^{c-d}-1}{3^d+1}=3\frac{3^{f-3}-1}{4}$, impossible by lemma \ref{quotient}.

If $p=2$ then by parity we either have $b=0$ or $e=0$. We consider the $b=0$ case first. If $c=d$ then we get the pairs $\mathbf{(2^a+1,1)_2}$ whose sum is of type 1, so assume $c\geq 3d$. Let $d=1$, then $c\geq 5$ and $2^a+2^2\frac{2^{c-2}+1}{3}=2^e\frac{2^f+1}{2^g+1}$. If $a=1$ then $e=1$ and $2\frac{2^{c-2}+1}{3}=2^g\frac{2^{f-g}-1}{2^g+1}$, not possible by lemma \ref{quotient} if $c>3$. If $a=2$ we get the pairs $\mathbf{(5,\frac{2^c+1}{3})_2}$. If $a>2$ then $e=2$ and $2^{a-2}+2\frac{2^{c-3}-1}{3}=2^g\frac{2^{f-g}-1}{2^g+1}$. If $a=3$ then $2^2\frac{2^{c-4}+1}{3}=2^g\frac{2^{f-g}-1}{2^g+1}$, so $c=7$ by lemma \ref{quotient} and we get the pair $\mathbf{(9,43)_2}$. If $a>3$ then $g=1$ and $2^{a-3}=2^{c-3}\frac{2^{f-c+2}-1}{3}$, so $a=c$ and we get the pairs $\mathbf{(2^a+1,\frac{2^a+1}{3})_2}$.

Still in the $b=0$ case, let now $d>1$. Then $2^a+2\cdot \frac{2^{c-1}+2^{d-1}+1}{2^d+1}=2^e\frac{2^f+1}{2^g+1}$. If $a=1$ then $2^2\frac{2^{c-2}+2^{d-2}+2^{d-1}+1}{2^d+1}=2^e\frac{2^f+1}{2^g+1}$. If $d=2$ we get the pairs $\mathbf{(3,\frac{2^c+1}{5})_2}$, so assume $d>2$. Then $e=2$ and $2^{d-2}\frac{2^{c-d}-1}{2^d+1}=2^g\frac{2^{f-g}-1}{2^g+1}$, which is impossible by lemma \ref{quotient}. If $a>1$ then $e=1$ and $2^{a-1}+2^{d-1}\frac{2^{c-d}-1}{2^d+1}=2^g\frac{2^{f-g}-1}{2^g+1}$, so two of $a-1,d-1,g$ must coincide. If $a=d$ then $2^{2d-1}\frac{2^{c-2d}+1}{2^d+1}=2^g\frac{2^{f-g}-1}{2^g+1}$, which is not possible by lemma \ref{quotient}. If $g=a-1<d-1$ then $2+2^{d-a}\frac{2^{c-d}-1}{2^d+1}=2^g\frac{2^{f-2g}+1}{2^g+1}$. If $d-a=1$ then $2^{d+1}\frac{2^{c-2d}+1}{2^d+1}=2^g\frac{2^{f-2g}+1}{2^g+1}$, so $g=d+1$, contradiction. Hence $g=1,a=2$ and $2^{d-3}\frac{2^{c-d}-1}{2^d+1}=2\frac{2^{f-3}-1}{3}$, which is not possible by lemma \ref{quotient}. Finally, if $g=d-1<a-1$ we have $2^{a-d}+2^d\frac{2^{c-2d}+1}{2^d+1}=2^g\frac{2^{f-2g}+1}{2^g+1}$. Since $g<d$ we must have $g=a-d$ and $2\frac{2^{c-2d}+1}{2^d+1}=2^g\frac{2^{f-3g}-1}{2^g+1}$, so $g=1,d=2,a=3,c=6$ by lemma \ref{quotient} and we get the pair $\mathbf{(9,13)_2}$.

We consider now the case $b>0$, then $2^a+2^b\frac{2^c+1}{2^d+1}=2^g\frac{2^{f-g}-1}{2^g+1}$, so two (and only two) of $a,b,g$ coincide. Suppose first that $a=b<g$. If $c=d$ we get the pairs $\mathbf{(2^a+1,2^a)_2}$ whose sum is of type 1, so assume $c\geq 3d$. Then $2^{a+1}\frac{2^{c-1}+2^{d-1}+1}{2^d+1}=2^g\frac{2^{f-g}-1}{2^g+1}$. If $d=1$ this is $2^{a+2}\frac{2^{c-2}+1}{2^d+1}=2^g\frac{2^{f-g}-1}{2^g+1}$, impossible by lemma \ref{quotient} since $a+2\geq 3$. So $d>1$ and then $g=a+1\geq 2$ and $\frac{2^{c-1}+2^{d-1}+1}{2^d+1}=\frac{2^{f-g}-1}{2^g+1}$, so $2\frac{2^{c-2}+2^{d-2}+2^{d-1}+1}{2^d+1}=2^g\frac{2^{f-2g}+1}{2^g+1}$. Since $g>1$ we must have $d=2$ and $2^3\frac{2^{c-4}+1}{2^2+1}=2^g\frac{2^{f-2g}+1}{2^g+1}$, and by lemma \ref{quotient} $g=3,c=6,a=b=2$ and we get the pair $\mathbf{(5,52)_2}$.

Suppose now that $a=g<b$. Then $2^{b-a}\frac{2^c+1}{2^d+1}=2\frac{2^{f-g-1}-2^{g-1}-1}{2^g+1}$. If $g=1$ then $2^{b-1}\frac{2^c+1}{2^d+1}=2^2\frac{2^{f-3}-1}{3}$. Then by lemma \ref{quotient} $b=3,c=d$ and we get the pair $\mathbf{(3,8)_2}$, part of \#15. If $g>1$ then $b=a+1$ and $2\frac{2^{c-1}+2^{d-1}+1}{2^d+1}=2^{g-1}\frac{2^{f-2g}+1}{2^g+1}$. If $d=1,c\geq 5$ then $2^2\frac{2^{c-2}+1}{3}=2^{g-1}\frac{2^{f-2g}+1}{2^g+1}$, impossible by lemma \ref{quotient}. So let $d>1$, then $g=a=2,b=3$ and $2^{d-1}\frac{2^{c-d}-1}{2^d+1}=2^g\frac{2^{f-3g}-1}{2^g+1}$. By lemma \ref{quotient} this implies $c=d$, which gives the pair $\mathbf{(5,8)_2}$.

Finally, suppose that $b=g<a$. Then $2^{a-b-1}+\frac{2^{c-1}+2^{d-1}+1}{2^d+1}=2^{g-1}\frac{2^{f-2g}+1}{2^g+1}$. Consider the case $d=1,c\geq 5$ first. Then $2^{a-b-1}+2\frac{2^{c-2}+1}{3}=2^{g-1}\frac{2^{f-2g}+1}{2^g+1}$. If $g=1$ then $a=b+1$ by parity, and $2^{c-1}+2^2=2^{f-2}$, not possible if $c>3$. If $g=2$ then $2^{a-b-2}+2\frac{2^{c-3}-1}{3}=2^2\frac{2^{f-6}-1}{2^2+1}$, and modulo 4 we get $a=b+3=5$ and $\frac{2^{c-4}+1}{3}=\frac{2^{f-6}-1}{2^2+1}$, and then by lemma \ref{quotient} we get $c=7$, which gives the pair $\mathbf{(33,172)_2}$. If $g>2$ then $a-b=2$ and $2^2\frac{2^{c-4}+1}{3}=2^{g-2}\frac{2^{f-2g}+1}{2^g+1}$, which by lemma \ref{quotient} implies $g=b=4,a=6,c=5$, which gives the pair $\mathbf{(65,176)_2}$. 

If $d>1$ then either $g=1$ or $a=b+1$ by parity. In the first case we have $2^{a-3}+2^{d-2}\frac{2^{c-d}-1}{2^d+1}=\frac{2^{f-3}-1}{3}$. If $c=d$ then $2^{a-3}+2^{a-2}+1=2^{f-3}$, so $a=3$ and we get the pair $\mathbf{(9,2)_2}$, assume $c\geq 3d$. Then by parity either $d=2$ or $a=3$. If $d=2$ then $2^{a-3}+\frac{2^{c-2}-1}{5}=\frac{2^{f-3}-1}{3}$, so $2^{a-3}+2^2\frac{2^{c-4}+1}{5}=2\frac{2^{f-4}+1}{3}$ and $a=4,c=6$ which gives the pair $\mathbf{(17,26)_2}$. If $a=3$ then $2^{d-2}\frac{2^{c-d}-1}{2^d+1}=2^2\frac{2^{f-5}-1}{3}$ which is not possible by lemma \ref{quotient}. Suppose now that $a=b+1$, then $1+\frac{2^{c-1}+2^{d-1}+1}{2^d+1}=2^{g-1}\frac{2^{f-2g}+1}{2^g+1}$. If $c=d$ then $g=b=2,a=3$ and we get the pair $\mathbf{(9,4)_2}$, assume $c\geq 3d$. Then $\frac{2^{c-2}+2^{d-2}+2^{d-1}+1}{2^d+1}=2^{g-2}\frac{2^{f-2g}+1}{2^g+1}$, and if $d=2$ then $2^2\frac{2^{c-4}+1}{2^d+1}=2^{g-2}\frac{2^{f-2g}+1}{2^g+1}$, so $b=g=4,a=5,c=6$ by lemma \ref{quotient} and we get the pair $\mathbf{(33,208)_2}$. If $d>2$ then $g=2$ by parity, and $2^{d-2}\frac{2^{c-d}-1}{2^d+1}=2^g\frac{2^{f-3g}-1}{2^g+1}$, which is impossible by lemma \ref{quotient}.
 
\item $p=5$ and $1+5^a+\frac{5^b+5^{b+c}}{5^d+1}=7\cdot 5^e$. Modulo 5 this becomes $1+5^a+5^b=2\cdot 5^e$, which implies $e=0$, and the only options are the pairs $\mathbf{(2,5)_5}$ and $\mathbf{(6,1)_5}$.
\end{itemize}

{\bf Type (1,4):} Here $p=5$, $A=5^a+1$, $B=5^b\cdot 7$, $A+B=1+5^a+7\cdot 5^b$. Modulo 4 we have $A\equiv 2$, $B\equiv 3$ so $A+B\equiv 1$ can only be of types 2 or 3. Depending of the type of $A+B$, we have:

\begin{itemize}
\item $1+5^a+7\cdot 5^b=5^c\frac{5^d+1}{2}$, which modulo 5 is $2+2\cdot 5^a+4\cdot 5^b\equiv 5^c$, so $b=c=0$ and then $15+2\cdot 5^a=5^d$, so $a=1$ and we get the pair $\mathbf{(6,7)_5}$.

\item $1+5^a+7\cdot 5^b=5^c\frac{5^d+1}{5^e+1}$, which modulo 5 is $1+5^a+2\cdot 5^b=5^c$, so $c=0$ and $a,b>0$. Then $5^a+7\cdot 5^b=5^e\frac{5^{d-e}-1}{5^e+1}$, which is impossible as both sides have different 5-adic residues.
\end{itemize}

{\bf Type (2,1):} Here $p\geq 3$, $A=\frac{p^a+1}{2}$, $B=p^b(p^c+1)$, $A+B=\frac{1+p^a}{2}+p^b+p^{b+c}$. We may assume $b>0$, since otherwise we get a reversed pair of type (1,2), and then $A+B$ is prime to $p$. Depending of the type of $A+B$, we have:

\begin{itemize}
\item $\frac{1+p^a}{2}+p^b+p^{b+c}=p^d+1$. Modulo $p$ we get $\frac{p+1}{2}\equiv 1$ or $2$, so $p=3$ and $d=0$, which is impossible as the sum is larger than 2.

\item $\frac{1+p^a}{2}+p^b+p^{b+c}=\frac{p^d+1}{2}$, so $p^a+2p^b+2p^{b+c}=p^d$, which modulo $p-1$ can only hold for $p=3,5$. For $p=5$ all exponents on the LHS must coincide and we get $a=b,c=0$ which gives the pairs $\mathbf{(\frac{5^a+1}{2},2\cdot 5^a)_5}$. If $p=3$ and $c=0$ we get $3^a+3^b+3^{b+1}$ which can never be a power of 3. So $c>0$ and then the exponents $a$ and $b$ must coincide, and $c=1$: we get the pairs $\mathbf{(\frac{3^a+1}{2},4\cdot 3^a)_3}$.

\item $\frac{1+p^a}{2}+p^b+p^{b+c}=\frac{p^d+1}{p^e+1}$. Modulo $p$ we get $\frac{1+p^a}{2}\equiv 1$, which is not possible if $a>0$.

\item $p=5$ and $\frac{1+5^a}{2}+5^b+5^{b+c}=7$, impossible as the LHS is $>7$.
\end{itemize}

{\bf Type (2,2):} Here $p\geq 3$, $A=\frac{p^a+1}{2}$, $B=p^b\frac{p^c+1}{2}$, $A+B=\frac{1+p^a+p^b+p^{b+c}}{2}$. Depending of the type of $A+B$, we have:

\begin{itemize}
\item $\frac{1+p^a+p^b+p^{b+c}}{2}=p^d(p^e+1)$, then $1+p^a+p^b+p^{b+c}=2p^d+2p^{d+e}$. If $p=3$ and $e=0$ then this is $3^d+3^{d+1}$ and three of the LHS exponents must coincide. Otherwise two pairs of exponents must coincide. Since $a,c>0$ the only option is $b=0,a=c$, and we get the pairs $\mathbf{(\frac{p^a+1}{2},\frac{p^a+1}{2})_p}$.

\item $\frac{1+p^a+p^b+p^{b+c}}{2}=p^d\frac{p^e+1}{2}$. Here we get the halved pairs of type (1,1) whose sum is of type 1, that is, $\mathbf{(\frac{3^a+1}{2},3^a)_3}$ for $p=3$.

\item $\frac{1+p^a+p^b+p^{b+c}}{2}=p^d\frac{p^e+1}{p^f+1}$. Modulo $p-1$ the LHS is $2$ or $\frac{p+3}{2}$ and the RHS is $1$, so $p=3$ and we may assume $a,c>1$. Modulo $3$ we get $2+2\cdot 3^b=3^d$, so $b=d=0$ and then $3^a+3^c+2\cdot 3^f+3^{a+f}+3^{c+f}=2\cdot 3^e$. So $3^a+3^c$ must be divisible by $3^f$, which implies $a,c\geq f$, and then $3^{a-f}+3^{c-f}+2+3^a+3^c=2\cdot 3^{e-f}$, so $e>f$ and the LHS is a multiple of $3$, which implies $a=f$ or $c=f$. In the first case we get $3+3^{c-a}+3^a+3^c=2\cdot 3^{e-a}$, only possible if $a=1$, which we explicitly exclude. Similarly if $c=f$, by symmetry.

\item $p=5$ and $\frac{1+5^a+5^b+5^{b+c}}{2}=7\cdot 5^d$, so $1+5^a+5^b+5^{b+c}=14\cdot 5^d$, which is not possible modulo 4.
\end{itemize}

{\bf Type (2,3):} Here $A=\frac{p^a+1}{2}$, $B=p^b\frac{p^c+1}{p^d+1}$, $A+B=\frac{1+p^a}{2}+\frac{p^b+p^{b+c}}{p^d+1}$. Depending of the type of $A+B$, we have:

\begin{itemize}
\item $\frac{1+p^a}{2}+\frac{p^b+p^{b+c}}{p^d+1}=p^e(p^f+1)$, then modulo $p$ either $b=0$ or $e=0$. If $b=0$ then $3+p^a+p^d+p^{a+d}+2p^c=2p^e+2p^{e+d}+2p^{e+f}+2p^{d+e+f}$, which modulo $p$ is $3\equiv 2p^e+2p^{e+f}$, only possible for $p=3$ and $e>0$, in which case we may assume $a>1$. Then modulo 9 we get $3+3^d+2\cdot 3^c\equiv 2\cdot 3^e+2\cdot 3^{e+f}$. Since the LHS is not a multiple of 9, we must have $e=1$, and $3^d+2\cdot 3^c\equiv 3+2\cdot 3^{f+1}$ modulo 9, which implies either $f=0$ (and then the sum of the pair is 6, which leaves $\mathbf{(5,1)_3}$ as the only option, part of \#26) or $d=1$ (and we get $3^a+3^{a+1}+2\cdot 3^c=2\cdot 3^2+2\cdot 3^{f+1}+2\cdot 3^{f+2}$, so $f=1,a=2,c=3$ which gives the pair $\mathbf{(5,7)_3}$).

If $b>0$ (so $e=0$) then modulo $p$ we get $\frac{1}{2}\equiv 1+p^f$, only possible if $p=3$ and $f=0$, in which case the sum of the pair is 2, which is not possible if $a>1$.

\item $\frac{1+p^a}{2}+\frac{p^b+p^{b+c}}{p^d+1}=p^e\frac{p^f+1}{2}$. Modulo $p-1$ we deduce that $p=3$ (so $a,f>1$), and modulo $3$ either $b=0$ or $e=0$. If $b=0$ then $3+3^a+3^d+3^{a+d}+2\cdot 3^c=3^e(1+3^f)(1+3^d)$. Then modulo 9 we get $3+3^d+2\cdot 3^c\equiv 3^e+3^{e+d}$, and since the LHS is not a multiple of 9, we must have $e=1$ and $3^a+3^d+3^{a+d}+2\cdot 3^c=3^{d+1}+3^{f+1}+3^{d+f+1}$. If $c=d$ we get the pairs $\mathbf{(\frac{3^a+1}{2},1)_3}$. Otherwise, $c>d$ and modulo $3^{d+1}$ we get $3^a+3^d\equiv 3^{f+1}$, which implies $d=f+1$ and $3^a+3^{a+d}+2\cdot 3^c=3^{d+1}+3^{2d}$. Then either $a=c$ and $3^{a+1}+3^{a+d}=3^{d+1}+3^{2d}$ (which implies $d=a=c$, contradiction) or $c=a+d$ and $3^a+3^{a+d+1}=3^{d+1}+3^{2d}$, which is not possible.

If $b>0$ (so $e=0$) then $3^a+3^{a+d}+2\cdot 3^b+2\cdot 3^{b+c}=3^f+3^{d+f}$ which has 3-adic valuation $f$. Since $v_3(3^a+3^{a+d})=a$, $v_3(2\cdot 3^b+2\cdot 3^{b+c})=b$ and $a<f$, we must have $a=b$ for the equality to hold, and then $3^{a+1}+3^{a+d}+2\cdot 3^{a+c}=3^f+3^{d+f}$. So $c=d$ and we get the pairs $\mathbf{(\frac{3^a+1}{2},3^a)_3}$. 

\item $\frac{1+p^a}{2}+\frac{p^b+p^{b+c}}{p^d+1}=p^e\frac{p^f+1}{p^g+1}$. Modulo $p-1$ we get $1$ or $\frac{p+1}{2}\equiv 0$, so $p=3$ and $a>1$. Modulo 3 we get $2+3^b\equiv 3^e$ so $b=0,e>0$. If $c=d$ we get the pairs $(\frac{3^a+1}{2},1)$ whose sum is of type 2, so assume $c\geq 3d$. Then modulo 9 we get $3+3^d\equiv 2\cdot 3^e$, so $d=e=1$ and $\frac{1}{4}(3+2\cdot 3^a+3^c)=3\frac{3^f+1}{3^g+1}$ (and then $f>g$, otherwise the sum of the pair would be 3, too small). Dividing by 3 we get $(1+3^g)(1+2\cdot 3^{a-1}+3^{c-1})=4(1+3^f)$, which modulo 9 is $(1+3^g)(1+2\cdot 3^{a-1})\equiv 4$, only possible if $g=1$, so $2\cdot 3^{a-1}+3^{c-1}=3^f$, and $a=c$, which gives the pairs $\mathbf{(\frac{3^a+1}{2},\frac{3^a+1}{4})_3}$.

\item $p=5$ and $\frac{1+5^a}{2}+\frac{5^b+5^{b+c}}{5^d+1}=7\cdot 5^e$. Modulo 5 this is $3+5^b\equiv 2\cdot 5^e$, which is not possible.
\end{itemize}

{\bf Type (2,4):} Here $p=5$, $A=\frac{5^a+1}{2}$, $B=5^b\cdot 7$, $A+B=\frac{1+5^a+14\cdot 5^b}{2}$. Modulo 4 we have $A\equiv 1$ or $3$, $B\equiv 3$ so $A+B\equiv 0$ or $2$ can only be of type 1: $\frac{1+5^a+14\cdot 5^b}{2}=5^c(5^d+1)$. Modulo 5 this is $1+4\cdot 5^b\equiv 2\cdot 5^c+2\cdot 5^{c+d}$, so $b=0$, $c>0$ and $15+5^a=2\cdot 5^c+2\cdot 5^{c+d}$, which implies $a=c=1$ and we get the pair $\mathbf{(3,7)_5}$.

{\bf Type (3,1):} Here $A=\frac{p^a+1}{p^b+1}$, $B=p^c(p^d+1)$, $A+B=\frac{p^a+1}{p^b+1}+p^c+p^{c+d}$. We may assume $c>0$, since otherwise we get a reversed pair of type (1,3), and then $A+B$ is prime to $p$. Depending of the type of $A+B$, we have:

\begin{itemize}
\item $\frac{p^a+1}{p^b+1}+p^c+p^{c+d}=p^e+1$. Modulo $p-1$ this is $3\equiv 2$, so $p=2$, and then $d,e>0$. If $a=b$ then $p^c+p^{c+d}=p^e$, which is not possible, so $a\geq 3b$, and $2^b\frac{2^{a-b}-1}{2^b+1}+2^c(2^d+1)=2^e$. Since $b,c<e$, comparing the 2-adic valuations we get $b=c$ and $2^b\frac{2^{a-2b}+1}{2^b+1}+2^d=2^{e-d}$, and by the same argument $b=d$ and $2^{a-2b}+2^b+2=2^{e-2b}+2^{e-b}$. Therefore, either $b=1$, and then $a=3$ or $a=5$ (and we get the pairs $\mathbf{(3,6)_2}$, part of the family \#1, and $\mathbf{(11,6)_2}$, part of the family \#2), or $a=3b$ (and we get the pairs $\mathbf{(\frac{2^{3b}+1}{2^b+1},2^b(2^b+1))_2}$).

\item $p\geq 3$ and $\frac{p^a+1}{p^b+1}+p^c+p^{c+d}=\frac{p^e+1}{2}$. Modulo $p$ we get $1\equiv \frac{1}{2}$, which is not possible.

\item $\frac{p^a+1}{p^b+1}+p^c+p^{c+d}=\frac{p^e+1}{p^f+1}$, modulo $p-1$ this is $3\equiv 1$, so $p=2$ or $3$. Substracting 1, we get $p^b\frac{p^{a-b}-1}{p^b+1}+p^c(p^d+1)=p^f\frac{p^{e-f}-1}{p^f+1}$. If $a=b$ then $c=f$ and $p^d+1=\frac{p^{e-f}-1}{p^f+1}$. If $p=3$, by lemma \ref{quotient} $d=0,f=1$ and we get the pair $\mathbf{(1,6)_3}$, part of the family \#33, and if $p=2$ (so $d>0$) $2^{d-1}=\frac{2^{e-f-1}-2^{f-1}-1}{2^f+1}$. If $d=1$ then $2^{e-f-1}=2^f+2^{f-1}+2$, then $f=2$ and we get the pair $\mathbf{(1,12)_3}$, part of the family \#17. Otherwise $\frac{2^{e-f-1}-2^{f-1}-1}{2^f+1}$ must be even, so $f=1$, and then $2^d+2^{d-1}+2=2^{e-2}$, so $d=2$ and we get the pair $\mathbf{(1,10)_2}$, part of the family \#19.

Assume from now on that $a\geq 3b$. Comparing the $p$-adic valuations, two of $b,c,f$ must coincide. Suppose $b=c$ first. Then $p^b\frac{p^{a-2b}+1}{p^b+1}+p^d=p^{f-b}\frac{p^{e-f}-1}{p^f+1}$. If $p=3$, the 3-adic residues of the terms on the LHS are 1, and on the RHS it is 2, so $b=d=f-b$, and then $\frac{3^{a-2b}+3^b+2}{3^b+1}=\frac{3^{e-f}-1}{3^f+1}$. Since $f=2b\geq 2$, if $b>1$ the equality is impossible modulo 9. So $b=1,f=2$ and $5\cdot 3^{a-2}+3^3=2\cdot 3^{e-2}$, which implies $a=5$ and we get the pair $\mathbf{(61,12)_3}$. If $p=2$, two of $b,d,f-b$ must coincide. If $b=d<f-b$ then $\frac{2^{a-2b}+2^b+2}{2^b+1}=2^{f-2b}\frac{2^{e-f}-1}{2^f+1}$. If $b=1$ then $2^2\frac{2^{a-4}+1}{3}=2^{f-2}\frac{2^{e-f}-1}{2^f+1}$, which is impossible by lemma \ref{quotient}. So $b>1$, then $f-2b=1$ and $\frac{2^{a-2b-1}+2^{b-1}+1}{2^b+1}=\frac{2^{e-f}-1}{2^f+1}$, which modulo 4 is $2^{a-2b-1}+2^{b-1}\equiv 2$, so $b=2,f=5$ and then modulo 16 we get $2^{a-5}+3\equiv -5$, so $a=8$, which is not an odd multiple of $b$. If $b=f-b<d$ then $\frac{2^{a-2b}+1}{2^b+1}+2^{d-b}=\frac{2^{e-f}-1}{2^f+1}$, which modulo 4 is $1+2^{d-b}\equiv -1$ (if $a=3b$) or $1-2^b+2^{d-b}\equiv -1$ (if $a>3b$), so either $b=1$ or $d=b+1$. In the first case, $f=2$, we may assume $a\geq 5$ and modulo 16 we get $2^{a-2}+2^{d-1}\equiv 8$, so either $a=5$ (and then $2^4+2^{d-1}+2^{d+1}=2^{e-2}$, which is not possible) or $d=4$ (and then $2^{a-2}+2^a+2^7=2^{e-2}+2^{e-1}$, $a=9$ and we get the pair $\mathbf{(171,34)_2}$). In the second case, we assume $b>1$ (so $f=2b\geq 4$), and then modulo 8 we get $3-2^b\equiv -1$, so $b=2,f=4$, $2^a+2^{a-4}+2^7+2^6=2^{e-4}+2^{e-2}$, and then $a=10$ and we get the pair $\mathbf{(205,36)_2}$. Finally, if $d=f-b<b$ then $2^{b-d}\frac{2^{a-2b}+1}{2^b+1}+1=\frac{2^{e-f}-1}{2^f+1}$ and $f\geq 2$, so modulo 4 we get $b-d=1$, and then modulo 8 we get $3\equiv -1$ since $f=2d+1\geq 3$, contradiction.

Suppose $b=f$ now. Then $p^{c-b}(p^d+1)=\frac{p^{e-b}-p^{a-b}}{p^b+1}=p^{a-b}\frac{p^{e-a}-1}{p^b+1}$, so $a=c$ and, by lemma \ref{quotient}, if $p=3$ this implies $d=0,b=1$ and we get the pairs $\mathbf{(\frac{3^a+1}{4},2\cdot 3^a)_3}$, and if $p=2$ then either $b=1,d=2$ and we get the pairs $\mathbf{(\frac{2^a+1}{3},5\cdot 2^a)_2}$ or $b=2,d=1$ and we get the pairs $\mathbf{(\frac{2^a+1}{5},3\cdot 2^a)_2}$.

Suppose now that $c=f$, then $p^{b-c}\frac{p^{a-b}-1}{p^b+1}+p^d+1=\frac{p^{e-f}-1}{p^f+1}$. If $p=3$, modulo 3 we get $3^d+2\equiv 3^{b-c}$, so $d=0,b>c$, and modulo 9 we get $3^f+3^{b-c}\equiv 3$, so either $f=1$ of $b-c=1$. In the first case, $3^{b-c}\frac{3^{a-b}-1}{3^b+1}=3^2\frac{3^{e-3}-1}{4}$, so $b-c=2$ and $b=1$ by lemma \ref{quotient}, which is impossible. In the second case, $3^b\frac{3^{a+2b}+1}{3^b+1}=3^{f-1}\frac{3^{e-2f}+1}{3^f+1}$, which is also impossible by lemma \ref{quotient}.

If $p=2$, we assume $d\geq 1$ so $b>c$ by parity. Modulo 4 we get $2^d+2\equiv 2^f+2^{b-c}$, so either all three or exactly one of $d,f,b-c$ is $1$. If $d=f=b-c=1$ then $b=2$ and $2^3\frac{2^{a-4}+1}{5}=2^2\frac{2^{e-3}-1}{3}$, impossible by lemma \ref{quotient}. If $d=1,f,b-c>1$, modulo 8 we get $2^f+2^{b-c}\equiv 4$, so either $f=2$ (and then $2^{b-c}\frac{2^{a-b}-1}{2^b+1}=2^4\frac{2^{e-6}-1}{5}$, impossible by lemma \ref{quotient}) or $b-c=2$ (and then $2^{b+2}\frac{2^{a-2b}+1}{2^b+1}=2^f\frac{2^{e-2f}+1}{2^f+1}$, so $f=c=b+2$ by lemma \ref{quotient}, contradiction). If $f=1,d,b-c>1$, we may assume $e\geq 5$, and modulo 8 we get $2^d+4\equiv 2^{b-c}$, so either $d=2$ (and then $2^{b-c}\frac{2^{a-b}-1}{2^b+1}=2^4\frac{2^{e-5}-1}{3}$, impossible by lemma \ref{quotient}) or $b-c=2$ (and then $b=3$ and $\frac{2^{a-3}-1}{9}+2^{d-2}=\frac{2^{e-3}-1}{3}$, which modulo 4 gives $2^{d-2}-1\equiv 1$, so $d=3$ and $2^3\frac{2^{a-6}+1}{9}=2^2\frac{2^{e-5}-1}{3}$, which is impossible). If $b-c=1,d,f>1$ then $2^{b+1}\frac{2^{a-2b}+1}{2^b+1}+2^d=2^f\frac{2^{e-2f}+1}{2^f+1}$. Since $b+1=f+2>f$, this implies $d=f$ and $2^{b+1}\frac{2^{a-2b}+1}{2^b+1}=2^{2f}\frac{2^{e-3f}-1}{2^f+1}$, so $f=2,b=3$ and the equality is impossible by lemma \ref{quotient}.

\item $p=5$ and $\frac{5^a+1}{5^b+1}+5^c+5^{c+d}=7$, impossible as the LHS is $>7$.
\end{itemize}

{\bf Type (3,2):} Here $p\geq 3$, $A=\frac{p^a+1}{p^b+1}$, $B=p^c\frac{p^d+1}{2}$, $A+B=\frac{p^a+1}{p^b+1}+\frac{p^c+p^{c+d}}{2}$. We may assume $c>0$, since otherwise we get a reversed pair of type (2,3), and then $A+B$ is prime to $p$. Depending of the type of $A+B$, we have:

\begin{itemize}
\item $\frac{p^a+1}{p^b+1}+\frac{p^c+p^{c+d}}{2}=p^e+1$. If $a=b$ then $p^c+p^{c+d}=2p^e$, which is not possible if $d>0$. So $a>b$ and $p^b\frac{p^{a-b}-1}{p^b+1}+p^c\frac{1+p^d}{2}=p^e$. Since the $p$-adic valuations of the summand on the LHS are $b$ and $c$ and $c<e$, we must have $b=c$ and $\frac{p^{a-b}-1}{p^b+1}+\frac{1+p^d}{2}=p^{e-c}$, which modulo $p$ is $-\frac{1}{2}\equiv 0$, so not possible.

\item $\frac{p^a+1}{p^b+1}+\frac{p^c+p^{c+d}}{2}=\frac{p^e+1}{2}$. Modulo $p$ this is $1\equiv \frac{1}{2}$, which is not possible.

\item $\frac{p^a+1}{p^b+1}+\frac{p^c+p^{c+d}}{2}=\frac{p^e+1}{p^f+1}$, modulo $p-1$ the LHS is $\equiv 2$ or $\frac{p+3}{2}$ and the RHS is $\equiv 1$, so $p=3$ and we may assume $d>1$. Note that $e\geq 3f$, otherwise the sum would be 1. If $a=b$ then $3^c\frac{3^d+1}{2}=3^f\frac{3^{e-f}-1}{3^f+1}$, and comparing the 3-adic valuations we get $c=f$, so $3+3^d+3^f+3^{d+f}=2\cdot 3^{e-f}$ which modulo 9 is $3+3^f\equiv 0$, not possible. So $a>b$, and $3^b\frac{3^{a-b}-1}{3^b+1}+3^c\frac{3^d+1}{2}=3^f\frac{3^{e-f}-1}{3^f+1}$. If $b=c$ then the LHS has 3-adic valuation $b=c$ with residue 1, while the RHS has 3-adic valuation $f$ with residue $-1$. So either $b=f<c$ or $c=f<b$. In the first case, we get $2\cdot 3^{a-b}+3^{c-b}(3^b+1)(3^d+1)=2\cdot 3^{e-b}$. In particular, $e>b$ so we must have $a=c$ for the 3-adic valuations to match, and $3+3^b+3^d+3^{b+d}=2\cdot 3^{e-a}$, which modulo 9 is $3+3^b\equiv 2\cdot 3^{e-a}$, so $b=e-a=1$, but then the LHS sum is too big.

If $c=f<b$ then $3^{b-c}\frac{3^{a-b}-1}{3^b+1}+\frac{3^d+1}{2}=\frac{3^{e-c}-1}{3^c+1}$. Modulo 9 this is $-3^{b-c}+5\equiv -1+3^c$, so $3^c+3^{b-c}\equiv 6$ which implies $c=1,b=2$. Then $9+6\cdot 3^{a-2}+10\cdot 3^d=5\cdot 3^{e-1}$, so $e\geq 4$ and $1+2\cdot 3^{a-3}+10\cdot 3^{d-2}=5\cdot 3^{e-3}$, which is not possible modulo 3 since $a$ is even, so $a>3$.

\item $p=5$ and $\frac{5^a+1}{5^b+1}+\frac{5^c+5^{c+d}}{2}=7$, impossible as the LHS is $>7$.
\end{itemize}

{\bf Type (3,3):} Here $A=\frac{p^a+1}{p^b+1}$, $B=p^c\frac{p^d+1}{p^e+1}$, $A+B=\frac{p^a+1}{p^b+1}+\frac{p^c+p^{c+d}}{p^e+1}$. Depending of the type of $A+B$, we have:

\begin{itemize}
\item $\frac{p^a+1}{p^b+1}+\frac{p^c+p^{c+d}}{p^e+1}=p^f(p^g+1)$. Here we always have the pair $\mathbf{(1,1)_p}$ (part of family \#3) if $p>2$, assume we are not in that case. Modulo $p$ we get $1+p^c=p^f+p^{f+g}$. If $p>2$ this implies $f=0$ (so then $g>0$ since the sum of the pair is not 2) and $c>0$. If $p=2$ then $c=0$ or $f=0$ by parity. Suppose $f=0$ first, then $p^b\frac{p^{a-b}-1}{p^b+1}+p^c\frac{p^d+1}{p^e+1}=p^g$. If $a=b$ then $d=e,c=g$ and we get the pairs $\mathbf{(1,p^c)_p}$, part of \#1. Assume $a\geq 3b$, then $b,c<g$ so comparing the $p$-adic valuations we must have $b=c$ and $\frac{p^{a-b}-1}{p^b+1}+\frac{p^d+1}{p^e+1}=p^{g-b}$, so $p^b\frac{p^{a-2b}+1}{p^b+1}+p^e\frac{p^{d-e}-1}{p^e+1}=p^{g-b}$. If $d=e$ then $g=2b,a=3b$ and we get the pairs $\mathbf{(\frac{p^{3b}+1}{p^b+1},p^b)_p}$, part of \#2. Otherwise, arguing as before we get $b=e$ and $\frac{p^{a-2b}+p^{d-b}}{p^b+1}=p^{g-2b}$, so $p^{a-2b}+p^{d-b}=p^{g-2b}+p^{g-b}$.  Then either $d=g=a$ (and we get the pairs $\mathbf{(\frac{p^a+1}{p^b+1},p^b\frac{p^a+1}{p^b+1})_p}$) or $a-b=g,g-b=d$, so $d=a-2b$ (and we get the pairs $\mathbf{(\frac{p^a+1}{p^b+1},p^b\frac{p^{a-2b}+1}{p^b+1})_p}$).

Suppose now $c=0$ (so necessarily $p=2$ and $f>0$ by parity, and we may assume $g>0$). Modulo 4 we get $2\equiv 2^e+2^f+2^b$ (if $a>b$) or $2\equiv 2^e+2^f$ (if $a=b$). In any case, one of $e,f,b$ must be $1$. If $f=1$ then $2^b\frac{2^{a-b}-1}{2^b+1}+2^e\frac{2^{d-e}-1}{2^e+1}=2^{g+1}$. If $a=b$ (or $d=e$) then this is not possible by lemma \ref{quotient}, so assume $a>b,d>e$, then comparing the 2-adic valuations we get $b=e$ and $\frac{2^{a-b}+2^{d-b}-2}{2^b+1}=2^{g+1-b}$, that is, $2^{a-b}+2^{d-b}=2^{g+1}+2^{g+1-b}+2$, which implies $g=b$ and either $a=2b+1$ or $a=b+2$, none of which is possible unless $a=3,b=1$.

If $f>1$ then one of $b,e$ is 1, by symmetry let $b=1$ (so $a\geq 5$). We get $\frac{2^{a-2}+1}{3}+2^{e-2}\frac{2^{d-e}-1}{2^e+1}=2^{f-2}(2^g+1)$. If $d=e$ then $f=2$ and $2\frac{2^{a-3}-1}{3}=2^g$, so $a=5$ by lemma \ref{quotient} and we get the pair $\mathbf{(11,1)_2}$, part of \#14. Otherwise, either $e=2$ or $f=2$ by parity. In the first case, modulo 8 we get $6\equiv 2^{f+g-2}+2^{f-2}$, so $f=3,g=1$, the sum of the pair is 24, and the only option is $\mathbf{(13,11)_2}$. If $f=2$ then $2\frac{2^{a-3}-1}{3}+2^{e-2}\frac{2^{d-e}-1}{2^e+1}=2^g$, and comparing the 2-adic valuations we get $e=3$, $2^2\frac{2^{a-5}-1}{3}+2^3\frac{2^{d-6}+1}{9}=2^{g-1}$, which looking at the 2-adic valuations is impossible unless $a=5,d=9$, which gives the pair $\mathbf{(11,57)_2}$.

\item $p\geq 3$ and $\frac{p^a+1}{p^b+1}+\frac{p^c+p^{c+d}}{p^e+1}=p^f\frac{p^g+1}{2}$. Modulo $p-1$ we get $2\equiv 1$ or $\frac{p+1}{2}$, so $p=3$ and we may assume $g>1$. Modulo $3$ we get $1+3^c\equiv 2\cdot 3^f$, so $c=f=0$, and then $\frac{3^{a-1}-3^{b-1}}{3^b+1}+\frac{3^{d-1}-3^{e-1}}{3^e+1}=\frac{3^{g-1}-1}{2}$. Modulo 3 this is $3^{a-1}-3^{b-1}+3^{d-1}-3^{e-1}\equiv 1$. Since $a=1\Rightarrow b=1$ and $d=1\Rightarrow e=1$, this is only possible if $b=e=1$, and then $3^a+3^d=2\cdot 3^g$, so $a=d$ and we get the pairs $\mathbf{(\frac{3^a+1}{4},\frac{3^a+1}{4})_3}$.

\item $\frac{p^a+1}{p^b+1}+\frac{p^c+p^{c+d}}{p^e+1}=p^f\frac{p^g+1}{p^h+1}$, modulo $p-1$ this is $2\equiv 1$, so it can only happen for $p=2$. By parity, either $c=0$ of $f=0$, suppose $c=0,f>0$ first. If $a=b$ and $d=e$ we get the pair $\mathbf{(1,1)_2}$, part of family \#6. If $d=e$ and $a\geq 3b$, modulo 4 we get $2\equiv 2^b+2^f$, so $b=1$ or $f=1$. In the first case, we get the pairs $\mathbf{(\frac{2^b+1}{3},1)_2}$. In the second case, $2^b\frac{2^{a-b}-1}{2^b+1}=2^{h+1}\frac{2^{g-h}-1}{2^h+1}$, which is impossible by lemma \ref{quotient}. By symmetry, we may assume from now on that $a\geq 3b,d\geq 3e$. Modulo 4 we get $2\equiv 2^b+2^e+2^f$, so one of $b,e,f$ must be 1. Suppose $f=1$ first, then substracting 2 on both sides we get $2^b\frac{2^{a-b}-1}{2^b+1}+2^e\frac{2^{d-e}-1}{2^e+1}=2^{h+1}\frac{2^{g-h}-1}{2^h+1}$, so comparing the 2-adic valuations two of $b,e,h+1$ must coincide. If $b=e$ then $2^{b+1}\frac{2^{a-b-1}+2^{d-e-1}-1}{2^b+1}=2^{h+1}\frac{2^{g-h}-1}{2^h+1}$, so $b=h$ and $2^{a-b-1}+2^{d-b-1}=2^{g-h}$, which implies $a=d$ and we get the pairs $\mathbf{(\frac{2^a+1}{2^b+1},\frac{2^a+1}{2^b+1})_2}$. If $b=h+1<e$ then $2^b\frac{2^{a-2b}+1}{2^b+1}+2^{e-b}\frac{2^{d-e}+1}{2^e+1}=2^h\frac{2^{g-2h}+1}{2^h+1}$, and since $h<b$ we must have $e-b=h$ and $2\frac{2^{a-2b}+1}{2^b+1}+\frac{2^{d-e}-1}{2^e+1}=\frac{2^{g-2h}+1}{2^h+1}$. If $g=3h$ the RHS is 1, not possible since both summands on the LHS are positive. So modulo $2^b$ we get $1\equiv 1-2^h$, which is also not possible since $h<b$. The case $e=h+1<b$ is similar by symmetry.

Still in the $c=0$ case, suppose now that $b=1,f>1$ (the case $e=1$ is similar by symmetry). Then $a\geq 5$, and $2^2\frac{2^{a-2}+1}{3}+2^e\frac{2^{d-e}-1}{2^e+1}=2^f\frac{2^g+1}{2^h+1}$, so $f=2$ or $e=2$. In the first case, substracting 1 we get $\frac{2^{a-3}-1}{3}+2^{e-3}\frac{2^{d-e}-1}{2^e+1}=2^{h-1}\frac{2^{g-h}-1}{2^h+1}$, so either $h=1$ (and then $2^{e-3}\frac{2^{d-e}-1}{2^e+1}=2^{a-3}\frac{2^{g-a+2}-1}{3}$, not possible by lemma \ref{quotient}) or $e=3$ (and then $2^2\frac{2^{a-5}-1}{3}+2^3\frac{2^{d-6}+1}{9}=2^{h-1}\frac{2^{g-h}-1}{2^h+1}$, which is impossible if $a=5$ by lemma \ref{quotient}, so $a>5,h=3$ and $2^{a-4}+2^{a-5}+2^{d-5}=2^{g-3}$, which implies $a=d$ and we get the pairs $\mathbf{(\frac{2^a+1}{3},\frac{2^a+1}{9})_2}$). In the second case, $\frac{2^{a-2}+1}{3}+\frac{2^{d-2}-1}{5}=2^{f-2}\frac{2^g+1}{2^h+1}$, so $\frac{2^{a-3}-1}{3}+2\frac{2^{d-4}+1}{5}=2^{f-3}\frac{2^g+1}{2^h+1}$, and then $f=3$ and modulo 4 we get $3\equiv 1-2^h$, so $h=1$ (and $g\geq 5$) and then modulo 8 we have $2^{a-3}\equiv 4$, so $a=5$, $\frac{2^{d-4}+1}{5}=\frac{2^{g-1}-1}{3}$, impossible by lemma \ref{quotient}.

We move now to the case $c>0,f=0$. Since the sum of the pair is $>1$, we always have $g\geq 3h$. Then $2^b\frac{2^{a-b}-1}{2^b+1}+2^c\frac{2^d+1}{2^e+1}=2^h\frac{2^{g-h}-1}{2^h+1}$. If $a=b$ then lemma \ref{quotient} implies that either $d=e$ (and we get the pairs $(1,2^c)$) or $d=3,e=1,c=h=2$, and we get the pair $\mathbf{(1,12)_2}$, part of \#17. So assume $a\geq 3b$ from now on, then comparing the 2-adic valuations we get that two of $b,c,h$ must coincide (and the third one must be strictly larger). Consider the case $b=c<h$ first. Then $2^b\frac{2^{a-2b}+1}{2^b+1}+2^e\frac{2^{d-e}-1}{2^e+1}=2^{h-b}\frac{2^{g-h}-1}{2^h+1}$. If $d=e$, since $h>1$ by lemma \ref{quotient} we get $b=1,a=5$ which gives the pair $\mathbf{(11,2)_2}$. Otherwise, two of $b,e,h-b$ must coincide. If $b=e<h-b$ then $\frac{2^{a-2b}+2^{d-b}}{2^b+1}=2^{h-2b}\frac{2^{g-h}-1}{2^h+1}$, impossible by lemma \ref{quotient} since $h>2b\geq 2$. If $b=h-b<e$ then $\frac{2^{a-2b}+1}{2^b+1}+2^{e-b}\frac{2^{d-e}-1}{2^e+1}=\frac{2^{g-h}-1}{2^h+1}$. If $a=3b$, modulo 4 this is $1+2^{e-b}\equiv -1$ (since $h=2b\geq 2$), so $e-b=1$, and adding 1 we get $2^{e+1}\frac{2^{d-2e}+1}{2^e+1}=2^h\frac{2^{g-2h}+1}{2^h+1}$, and then by lemma \ref{quotient} we have $b+1=e=h-1=2b-1$, so $b=2,e=3,d=3e=9,a=3b=6$ and we get the pair $\mathbf{(13,228)_2}$. Otherwise, modulo 4 we get $1-2^b-2^{e-b}\equiv -1$, so either $b=1$ or $e-b=1$. The latter would imply $2^b\frac{2^{a-3b}-1}{2^b+1}+2^{b+2}\frac{2^{d-2e}+1}{2^e+1}=2^h\frac{2^{g-2h}+1}{2^h+1}$, which is not possible since $b<b+2,h$. So $b=1,h=2$ and $\frac{2^{a-2}+1}{3}+2^{e-1}\frac{2^{d-e}-1}{2^e+1}=\frac{2^{g-2}-1}{5}$, which adding 1 gives $\frac{2^{a-4}+1}{3}+2^{e-3}\frac{2^{d-e}-1}{2^e+1}=\frac{2^{g-4}+1}{5}$, and modulo 4 this is $2^{a-4}+2\equiv 2^{e-3}$, so either $a=5$ (which would imply $2^{e-5}\frac{2^{d-e}-1}{2^e+1}=2^2\frac{2^{g-6}-1}{5}$, not possible by lemma \ref{quotient}) or $e=4$ (and then modulo 8 we get $3\cdot 2^{a-4}\equiv 5\cdot 2^{g-4}+4$, and since $a\geq 5$ is odd and $g\geq 6$, we must have $g=6$ and the sum of the pair is 13, which leaves $(11,2)$ as the only option, already found above). Finally, if $e=h-b<b$ (so $b\geq 2,h\geq 3$) we get $2^{b-e}\frac{2^{a-2b}+1}{2^b+1}+2^e\frac{2^{d-2e}+1}{2^e+1}=2^h\frac{2^{g-2h}+1}{2^h+1}$, and since $e<h$ this implies $b=2e$ and $\frac{2^{a-2b}+1}{2^b+1}+\frac{2^{d-2e}+1}{2^e+1}=2^b\frac{2^{g-2h}+1}{2^h+1}$, which modulo 4 is $2-2^e\equiv 0$, so $e=1,b=2,h=3$ and $\frac{2^{a-6}-1}{5}+\frac{2^{d-4}+1}{3}=\frac{2^{g-6}+1}{9}$, and then by parity $a=6$ and by lemma \ref{quotient} $d=5$, which gives the pair $\mathbf{(13,44)_2}$.

Consider now the case $b=h<c$. Then $2^c\frac{2^d+1}{2^e+1}=2^a\frac{2^{g-h+b-a}-1}{2^b+1}$, so by lemma \ref{quotient} we get $d=e,a=c$, which gives the pairs $\mathbf{(\frac{2^a+1}{3},2^a)_2}$.

Finally, consider the case where $c=h<b$. Then $2^{b-c}\frac{2^{a-b}-1}{2^b+1}+\frac{2^d+1}{2^e+1}=\frac{2^{g-h}-1}{2^h+1}$. Modulo 4 this is $2\equiv 2^{b-c}+2^e+2^h$ (or $2\equiv 2^{b-c}+2^h$ if $d=e$) so one of $b-c,e,h$ must be $1$. If $b-c=1$ then $2^{b+1}\frac{2^{a-2b}+1}{2^b+1}+2^e\frac{2^{d-e}-1}{2^e+1}=2^h\frac{2^{g-2h}-1}{2^h+1}$. Since $h<b$, this is impossible if $d=e$, and otherwise it implies that $e=h$, in which case $2^{b+1-e}\frac{2^{a-2b}+1}{2^b+1}+\frac{2^{d-e}-1}{2^e+1}=\frac{2^{g-2h}+1}{2^h+1}$, which modulo 4 gives $2^{e+1}\equiv 2$, not possible. If $e=1,b-c>1$ we may assume $d\geq 5$, then $2^{b-c}\frac{2^{a-b}-1}{2^b+1}+\frac{2^d+1}{3}=\frac{2^{g-h}-1}{2^h+1}$. Modulo 4 we get $h\geq 2$, and then $2^{b-c-2}\frac{2^{a-b}-1}{2^b+1}+\frac{2^{d-2}+1}{3}=2^{h-2}\frac{2^{g-2h}+1}{2^h+1}$, so $b-c=2$ or $h=2$. In the first case, $2^b\frac{2^{a-2b}+1}{2^b+1}+2\frac{2^{d-3}-1}{3}=2^{h-2}\frac{2^{g-2h}+1}{2^h+1}$, so $h=c=3$ (since $b\geq 2$), and then $b=c+2=5$ and $2^4\frac{2^{a-10}+1}{2^5+1}+2^2\frac{2^{d-5}-1}{3}=2^3\frac{2^{g-9}-1}{9}$, which is impossible looking at the 2-adic valuations of the terms. In the second case, $2^{b-c-2}\frac{2^{a-b}-1}{2^b+1}+2\frac{2^{d-3}-1}{3}=2^2\frac{2^{g-6}-1}{5}$, so $b=c+3=5$, $\frac{2^{a-5}-1}{33}+\frac{2^{d-3}-1}{3}=2\frac{2^{g-6}-1}{5}$ and modulo 4 we get $0\equiv 2$, impossible. Finally, if $h=1$ and $e,b-c>1$ we may assume $g\geq 5$, and we have $2^{b-1}\frac{2^{a-b}-1}{2^b+1}+2^e\frac{2^{d-e}-1}{2^e+1}=2^2\frac{2^{g-3}-1}{3}$, so either $e=2$ or $b=3$. In the first case we get $2^{b-3}\frac{2^{a-b}-1}{2^b+1}+2^2\frac{2^{d-4}+1}{5}=2\frac{2^{g-4}+1}{3}$, so $b=4$ and $2^4\frac{2^{a-8}+1}{17}+2\frac{2^{d-4}+1}{5}=2^2\frac{2^{g-6}+1}{3}$, not possible looking at their 2-adic valuations. In the second case $\frac{2^{a-b}-1}{2^b+1}+2^{e-2}\frac{2^{d-e}-1}{2^e+1}=\frac{2^{g-3}-1}{3}$, which modulo 4 implies $e=3$, and then adding 3 on both sides we get $2^b\frac{2^{a-2b}+1}{2^b+1}+2^4\frac{2^{d-6}+1}{9}=2^3\frac{2^{g-6}+1}{3}$, so $b=3$ and $2^{a-6}+2^{d-5}=2^{g-6}+2^{g-5}$, and either $a=d$ (which gives the pairs $\mathbf{(\frac{2^a+1}{9},2\frac{2^a+1}{9})_2}$) or $a=d+2$ (not possible since $a$ and $d$ must be multiples of 3).

\item $p=5$ and $\frac{5^a+1}{5^b+1}+\frac{5^c+5^{c+d}}{5^e+1}=7\cdot 5^f$. Modulo 5 this is $1+5^c=2\cdot 5^f$, so $c=f=0$ and $\frac{5^a+1}{5^b+1}+\frac{5^d+1}{5^e+1}=7$. Since both summands are 1 modulo 5, the only option is the already found pair $(6,1)_5$, of type (1,3).
\end{itemize}

{\bf Type (3,4):} Here $p=5$, $A=\frac{5^a+1}{5^b+1}$, $B=7\cdot 5^c$, $A+B=\frac{1+5^a+7\cdot 5^c+7\cdot 5^{b+c}}{1+5^b}$. Modulo 4 we have $A\equiv 1$ and $B\equiv 3$, so $A+B\equiv 0$ which is not possible for any FM exponent. 

{\bf Type (4,1):} Here $p=5$, $A=7$, $B=5^a(5^b+1)$, $A+B=7+5^a+5^{a+b}$. We may assume $a>0$, since otherwise we get a reversed pair of type (1,4), and then $A+B$ is prime to 5. Modulo 4 we have $A\equiv 3$, $B\equiv 2$, so $A+B\equiv 1$ can only be of types 2 and 3. Depending of the type of $A+B$, we have:

\begin{itemize}
\item $7+5^a+5^{a+b}=\frac{5^c+1}{2}$, which modulo 5 is $2\equiv 3$, not possible.

\item $7+5^a+5^{a+b}=\frac{5^c+1}{5^d+1}$, which modulo 5 is $2\equiv 1$, not possible.
\end{itemize}

{\bf Type (4,2):} Here $p=5$, $A=7$, $B=5^a\frac{5^b+1}{2}$, $A+B=\frac{14+5^a+5^{a+b}}{2}$. We may assume $a>0$, since otherwise we get a reversed pair of type (2,4), and then $A+B$ is prime to 5. Modulo 4 we have $A\equiv 3$, $B\equiv 1$ or $3$, so $A+B\equiv 0$ or $2$ can only be of type 1. Then $\frac{14+5^a+5^{a+b}}{2}=1+5^c$, which modulo 5 is $2\equiv 1+5^c$, only possible if $c=0$, not possible since $2<7$.

{\bf Type (4,3):} Here $p=5$, $A=7$, $B=5^a\frac{5^b+1}{5^c+1}$. Modulo 4 we have $A\equiv 3$, $B\equiv 1$, so $A+B\equiv 0$ can never be an FM-exponent.

{\bf Type (4,4):} Here $p=5$, $A=7$, $B=7\cdot 5^a$. Modulo 4 we have $A\equiv B\equiv 3$, so $A+B\equiv 2$ can only be of type 1: $7+7\cdot 5^a=5^b+1$. Modulo 5 this is $2+2\cdot 5^a=5^b+1$, only possible for $b=0$, but then $5^b+1=2<7$.

\end{proof}

For each of the pairs in the previous theorem, we will now check whether or not the corresponding local system has finite monodromy. We start with two technical lemmas.

 \begin{lemma}\label{vfunction}
 Fix a prime $p$, and let $a\geq 3b$ be an odd multiple of $b$ and $c\geq b$. Then
 \begin{enumerate}
     \item $V_p\left(\frac{p^a+1}{(p^b+1)(p^c-1)}\right)=\frac{a}{2c}-\frac{b}{2c}+\frac{1}{c(p-1)}$
     \item $V_p\left(\frac{p^{a+b}-p^b-2}{(p^b+1)(p^c-1)}\right)=\frac{a}{2c}+\frac{b}{2c}-\frac{1}{c(p-1)}$
 \end{enumerate}
 \end{lemma}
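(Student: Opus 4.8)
The plan is to compute both values directly from the digit-sum definition of $V_p$, reducing everything to counting base-$p$ digits of an explicit integer. Write $a=(2m+1)b$ with $m\geq 1$ (possible since $a$ is an odd multiple of $b$ and $a\geq 3b$). The starting point is the identity $\frac{p^a+1}{p^b+1}=\sum_{j=0}^{2m}(-1)^j p^{jb}=1-p^b+p^{2b}-\cdots+p^{2mb}$, valid because $a/b$ is odd, so that the numerator of (1) divided by $p^b+1$ is the honest positive integer $Q:=\sum_{j=0}^{2m}(-1)^jp^{jb}$. Writing $s_p$ for the base-$p$ digit sum, the whole problem becomes the computation of $s_p$ of $Q$ and of a companion integer.

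First I would pin down the base-$p$ expansion of $Q$. Grouping consecutive terms as $Q=1+\sum_{i=1}^{m}\bigl(p^{2ib}-p^{(2i-1)b}\bigr)=1+\sum_{i=1}^m p^{(2i-1)b}(p^b-1)$ and using $p^b-1=\sum_{t=0}^{b-1}(p-1)p^t$, one sees that $Q$ has a single digit $1$ in position $0$ together with $m$ pairwise disjoint blocks of $b$ consecutive digits all equal to $p-1$, occupying the positions $[(2i-1)b,\,2ib)$ for $i=1,\dots,m$. These blocks are disjoint and avoid position $0$, so no carries occur and $s_p(Q)=1+mb(p-1)=1+\tfrac{(a-b)(p-1)}{2}$. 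Since $Q$ is already the canonical numerator of $\frac{Q}{p^c-1}$ modulo $\ZZ$ in the relevant range (it is smaller than $p^c-1$), the definition gives $V_p\bigl(\tfrac{Q}{p^c-1}\bigr)=\frac{s_p(Q)}{c(p-1)}=\frac{a-b}{2c}+\frac{1}{c(p-1)}$, which is exactly (1).

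For (2) I would avoid a fresh digit computation by relating the two numerators. A direct manipulation gives $\frac{p^a+1}{p^b+1}+\frac{p^{a+b}-p^b-2}{p^b+1}=\frac{(p^a-1)(p^b+1)}{p^b+1}=p^a-1$, so the integer $M:=\frac{p^{a+b}-p^b-2}{p^b+1}$ equals $p^a-1-Q$. Since all nonzero digits of $Q$ sit strictly below position $a-b$ and each is at most $p-1$, the subtraction $(p^a-1)-Q$ is borrow-free: in positions $0,\dots,a-b-1$ the digit of $M$ is $(p-1)-d_t(Q)$, where $d_t(Q)$ denotes the $t$-th $p$-adic digit of $Q$, while in positions $a-b,\dots,a-1$ it equals $p-1$. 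Summing, $s_p(M)=a(p-1)-s_p(Q)=(p-1)\tfrac{a+b}{2}-1$, and the definition of $V_p$ yields $\frac{a+b}{2c}-\frac{1}{c(p-1)}$, which is (2).

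The two digit counts are routine once the block structure is in place; the one step that needs genuine care is the reduction step, namely checking that $Q$ (respectively $M$) is already the reduced numerator modulo $p^c-1$, so that passing to the canonical representative in $[0,p^c-2]$ does not shrink the digit sum. This is precisely the point where the size of $c$ relative to $a$ and $b$ enters, and I expect it to be the main obstacle: if $c$ is too small the expressions $\frac{Q}{p^c-1}$ and $\frac{M}{p^c-1}$ wrap around and the naive digit count no longer computes $V_p$, so the hypothesis on $c$ must be exactly strong enough to rule this out. Once the no-reduction (equivalently, borrow-free) structure is secured, both formulas drop out immediately from the defining identity $V_p\bigl(n/(p^r-1)\bigr)=s_p(n)/(r(p-1))$.
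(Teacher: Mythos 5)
Your proof is correct and takes essentially the same route as the paper's: the same telescoping expansion $\frac{p^a+1}{p^b+1}=1+\sum_{i=1}^{m}p^{(2i-1)b}(p^b-1)$ yielding digit sum $1+\frac{(a-b)(p-1)}{2}$, and the same complement identity $\frac{p^a+1}{p^b+1}+\frac{p^{a+b}-p^b-2}{p^b+1}=p^a-1$ (all of whose digits are $p-1$, so the subtraction is borrow-free) to deduce (2) from (1). Regarding the reduction step you flag as the main obstacle, you are right that it is the crux: the paper settles it via the chain $\frac{p^a+1}{p^b+1}<p^{a-b}<p^a-1\leq p^c-1$, which actually requires $c\geq a$ rather than the stated $c\geq b$ (as printed the hypothesis is too weak --- for instance $p=2$, $a=3$, $b=c=1$ makes (1) fail --- but both places where the lemma is invoked have $c=a$ or $c=a+b$, so the argument, like yours, stands once the hypothesis is read as $c\geq a$).
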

 \begin{proof}\begin{enumerate}
    \item Since $\frac{p^a+1}{p^b+1}<p^{a-b}<p^a-1\leq p^c-1$, $V_p\left(\frac{p^a+1}{(p^b+1)(p^c-1)}\right)$ is $\frac{1}{c(p-1)}$ times the sum of the $p$-adic digits of $\frac{p^a+1}{p^b+1}=\sum_{k=1}^{(a-b)/2b}p^{(2k-1)b}(p^b-1)+1$, which is $\frac{a-b}{2b}b(p-1)+1$.
    \item It follows from (1) since $\frac{p^a+1}{p^b+1}+\frac{p^{a+b}-p^b-2}{p^b+1}=p^a-1$, all whose last $a$ $p$-adic digits are $p-1$.
     \end{enumerate}
 \end{proof}

 \begin{lemma}\label{switchsum}
 Let $k=\mathbb F_{2^r}$ and $\psi:k\to\mathbb C^\times$ the additive character $t\mapsto (-1)^{\mathrm{Tr}_{k/\mathbb F_2}(t)}$. Then for every $t,y\in k$ we have the equality
 \[
 \sum_{\substack{x\in k\\ x^2+x=y}}\psi(tx)=\sum_{\substack{u\in k\\ u^2+u=t^2}}\psi(uy).
 \]
 \end{lemma}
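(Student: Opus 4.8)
The plan is to expand the Artin--Schreier constraint on the left-hand side by additive Fourier duality and then recognize the outcome as the right-hand side. The one essential tool is that $\mathrm{Tr}_{k/\mathbb F_2}$ is invariant under the Frobenius $z\mapsto z^2$, that is $\mathrm{Tr}_{k/\mathbb F_2}(z)=\mathrm{Tr}_{k/\mathbb F_2}(z^2)$ for all $z\in k$; since squaring is a bijection of $k$, this is equivalent to the character identity $\psi(sx^2)=\psi(s^{1/2}x)$, where $s^{1/2}$ denotes the unique square root of $s$.

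First I would write the condition $x^2+x=y$ through its Fourier indicator: because $\frac{1}{2^r}\sum_{s\in k}\psi(sz)$ equals $1$ when $z=0$ and $0$ otherwise, and since $-y=y$ in characteristic $2$, I obtain
\[
\sum_{\substack{x\in k\\ x^2+x=y}}\psi(tx)=\frac{1}{2^r}\sum_{s\in k}\sum_{x\in k}\psi\bigl(s(x^2+x+y)+tx\bigr)=\frac{1}{2^r}\sum_{s\in k}\psi(sy)\sum_{x\in k}\psi\bigl(sx^2+(s+t)x\bigr).
\]
Then I would evaluate the inner sum: replacing $\psi(sx^2)$ by $\psi(s^{1/2}x)$ turns it into $\sum_{x\in k}\psi\bigl((s^{1/2}+s+t)x\bigr)$, an additive character sum that equals $2^r$ if $s^{1/2}+s+t=0$ and vanishes otherwise. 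Substituting back collapses the outer sum (again using $-t=t$) to
\[
\sum_{\substack{s\in k\\ s^{1/2}+s=t}}\psi(sy).
\]

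Finally I would match the two index sets. Since squaring is a bijection of $k$, the equation $s^{1/2}+s=t$ holds if and only if its square $s+s^2=t^2$ does (using additivity of squaring in characteristic $2$), so $\{s\in k:s^{1/2}+s=t\}=\{u\in k:u^2+u=t^2\}$ as sets; renaming $s$ as $u$ then identifies the last display with $\sum_{u^2+u=t^2}\psi(uy)$, which is the claim. The proof is short, so there is no real obstacle; the only place demanding care is the characteristic-$2$ bookkeeping—the Frobenius-invariance of the trace behind $\psi(sx^2)=\psi(s^{1/2}x)$, the additivity of the square-root map, and the repeated use of $-1=1$—all of which must be applied consistently so that the equality of index sets is genuine and no solutions are created or lost. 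As a cross-check, one could instead argue directly: both sides vanish unless $\mathrm{Tr}_{k/\mathbb F_2}(y)=\mathrm{Tr}_{k/\mathbb F_2}(t)=0$, and in that case each side equals twice a single character value, whose agreement reduces to the same trace identity.
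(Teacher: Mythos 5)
Your proof is correct, but it follows a genuinely different route from the paper's. You expand the Artin--Schreier fiber condition $x^2+x=y$ by orthogonality of additive characters, linearize the quadratic term via Frobenius-invariance of the trace ($\psi(sx^2)=\psi(s^{1/2}x)$), evaluate the resulting linear character sum to collapse the dual variable to the locus $s^{1/2}+s=t$, and then square that equation (Frobenius being an additive bijection) to identify the index set with $\{u:u^2+u=t^2\}$; each step checks out, including the sign bookkeeping in characteristic $2$. The paper instead argues directly on the fibers: since the equation $w^2+w=z$ has either no solutions or exactly the two solutions $w_0,w_0+1$, both sides vanish unless $\psi(t)=\psi(y)=1$, and in the remaining case each side equals twice a single character value, whose equality $\psi(tx_0)=\psi(u_0y)$ is verified by a short computation using $\psi(z)=\psi(z^2)$ and the defining equations of $x_0$ and $u_0$ --- essentially the ``cross-check'' you sketch in your final sentences. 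The trade-off: your Fourier-duality argument is mechanical and case-free, needing no clever identity among the roots, and it exhibits the lemma as a miniature instance of the general principle that summing over an Artin--Schreier fiber is dual to imposing an Artin--Schreier condition on the dual variable; the paper's argument is shorter and more concrete, making explicit exactly when the two sides are nonzero. Both ultimately rest on the same core fact, the Frobenius-invariance of $\mathrm{Tr}_{k/\mathbb F_2}$.
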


 \begin{proof}

 Recall that an equation of the form $w^2+w=z$ for a given $z\in k$ has a root in $k$ if and only if $\psi(z)=1$, and in this case, if $w_0$ is such a root, then $w_0+1$ is the other one.

 It follows that $(1-\psi(t))\sum_{x^2+x=y}\psi(tx)=0$ and $(1-\psi(y))\sum_{u^2+u=t^2}\psi(uy)=0$. Therefore, if $\psi(t)=\psi(t^2)=-1$ or $\psi(y)=-1$ both sums vanish. Assume now that $\psi(t)=\psi(y)=1$ and let $x_0\in k$ (resp.~$u_0\in k$) be a root of the equation $x^2+x=y$ (resp.~$u^2+u=t^2$). Then $\sum_{x^2+x=y}\psi(tx)=(1+\psi(t))\psi(tx_0)=2\psi(tx_0)$ and $\sum_{u^2+u=t^2}\psi(uy)=(1+\psi(y))\psi(u_0y)=2\psi(u_0y)$. The equality $\psi(tx_0)=\psi(t^2x_0^2)=\psi(u_0^2x_0^2)\psi(u_0x_0^2)=\psi(u_0x_0)^2\psi(u_0y)=\psi(u_0y)$ proves the lemma.
 \end{proof}
 
\begin{thm} Let $A,B$ be positive integers such that $A$ is prime to $p$ and the polynomial $f(x)=x^A(x-1)^B$ is such that $\mathcal{F}_f$ has finite monodromy group. Then $(A,B)$ is one of the following pairs (or their reversed pairs):
\begin{enumerate}
\item $p\geq 2$, $(1,p^a)$ for $a\geq0$.
\item $p=2$, $(1,12)$.
\item $p=2$, $(2^a+1,2^a+1)$ for $a\geq 1$.
\item $p=2$, $(\frac{2^a+1}{2^b+1},\frac{2^a+1}{2^b+1})$ for $b\geq 1$ and $a\geq b$ an odd multiple of $b$.
\item $p=2$, $(1,2^a+1)$ for $a\geq 1$.
\item $p=2$, $(2^a+1,2^a)$ for $a\geq 1$.
\item $p=2$, $(2^a+1,\frac{2^a+1}{3})$ for odd $a\geq1$.
\item $p=2$, $(1,\frac{2^a+1}{3})$ for odd $a\geq1$.
\item $p=2$, $(\frac{2^a+1}{3},2^a)$ for odd $a\geq 1$.
\item $p=3$, $(1,4)$, $(1,6)$, $(2,2)$ or $(4,3)$.
\item $p=3$, $(3^a+1,\frac{3^a+1}{2})$ for $a\geq 0$.
\item $p=3$, $(1,\frac{3^a+1}{2})$ for $a\geq 0$.
\item $p=3$, $(\frac{3^a+1}{2},3^a)$ for $a\geq 0$.
\item $p=5$, $(2,1)$.
\end{enumerate}
\end{thm}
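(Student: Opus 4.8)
The plan is to combine the two results already in hand. Theorem \ref{Belyicrit} reduces finite monodromy of $\FF_f$ to a pair of $V$-function inequalities, while Theorem \ref{FMpairs} lists every pair $(A,B)$ with $A$ prime to $p$ for which $A$, $B$ and $A+B$ are simultaneously FM-exponents. Since Proposition \ref{hiloterms}, the symmetry $(A,B)\leftrightarrow(B,A)$ established for Belyi polynomials, and the second inequality of Theorem \ref{Belyicrit} (which is exactly the FM-exponent condition for $A+B$) together force $A$, $B$ and $A+B$ all to be FM-exponents, the candidate pairs are \emph{precisely} those enumerated in Theorem \ref{FMpairs}. The whole problem is thus reduced to deciding, for each of those families, whether the first inequality of Theorem \ref{Belyicrit},
\[
V(x)+V(y)+V(y-(A+B)x)+V(Bx-y)+V(-Bx)\geq\tfrac{3}{2},
\]
holds for every $x,y\in(\QQ/\ZZ)_{\text{prime to }p}$ with $x,y\neq0$; the pairs for which it does are exactly the ones to be listed.

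First I would dispose of the families that survive by a direct reduction, bypassing the inequality entirely. For $(A,B)=(1,p^a)$ one uses $(x-1)^{p^a}=x^{p^a}-1$, so $f(x)=x^{p^a+1}-x$ and, after an automorphism of the $(s,t)$-plane, $\FF_f$ becomes the monomial sheaf $\FF_{x^{p^a+1}}$, which has finite monodromy because $p^a+1$ is an FM-exponent (Theorem \ref{KT1}). When $B=p^c$ is a power of $p$, the same identity turns $f$ into the binomial $x^{A+p^c}\pm x^A$, so $\FF_f$ is a binomial family and finiteness follows from the results of Section~3 via Theorem \ref{equivalence}. Finally, for the diagonal families at $p=2$, namely $(m,m)$ with $m\in\{2^a+1,\frac{2^a+1}{2^b+1}\}$, I would write $f(x)=(x^2+x)^m$, expand the exponential sum over the fibres of $x\mapsto x^2+x$, apply Lemma \ref{switchsum} to switch the roles of the inner variable, and reorder the summation to obtain a combination of monomial sums $\sum_{y}\psi_r(sy^m+uy)$; these are algebraic integers by Proposition \ref{criterion-algint} since $m$ is an FM-exponent, so $\FF_f$ has finite monodromy.

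For the candidate families of Theorem \ref{FMpairs} that are \emph{not} in the final list, it suffices to exhibit a single pair $(x,y)$ making the left-hand side strictly below $\frac32$. The natural choice takes $x,y$ of the form $\frac{m}{(p^b+1)(p^c-1)}$ with $c$ chosen so that all five arguments are of the type covered by Lemma \ref{vfunction}; its two formulas then give the exact values in closed form, and summing and comparing with $\frac32$ is a finite computation per family. This is where the bulk of the case work lies, since Theorem \ref{FMpairs} contains far more families than survive (for instance the full diagonal $\bigl(\frac{p^a+1}{2},\frac{p^a+1}{2}\bigr)$ at $p=3$ must be eliminated for $a\geq2$, even though both its entries and their sum are FM-exponents). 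The same evaluations, combined with the positivity of $V$ and the identity $V(z)+V(-z)=1$, are what confirm the inequality for the surviving families that do not reduce to a monomial or binomial, such as $(1,12)_2$, the $\bigl(2^a+1,\frac{2^a+1}{3}\bigr)_2$ family, and the sporadic $p=3,5$ pairs.

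The main obstacle is exactly this confirmation direction for the infinite surviving families handled by the inequality rather than by a reduction: one must establish $\geq\frac32$ for \emph{every} $(x,y)$, not merely at a conjectured minimizer. Because $V_p$ is a normalized sum of base-$p$ digits, the left-hand side is a piecewise-linear function of $(x,y)$ whose global minimum over all residues is a genuine combinatorial minimization, and the real danger is that the minimum is attained at some $(x,y)$ lying outside the tidy denominators of Lemma \ref{vfunction}. I expect the decisive step to be an analysis of the carry patterns occurring in $(A+B)x$ and $Bx-y$ that reduces this minimization to finitely many configurations, each of which can then be matched against the closed-form values of Lemma \ref{vfunction}; controlling these carries uniformly in the parameters $a,b$ is the part that will require the most care.
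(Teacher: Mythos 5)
Your skeleton coincides with the paper's: Proposition \ref{hiloterms} plus the $(A,B)\leftrightarrow(B,A)$ symmetry and the second inequality of Theorem \ref{Belyicrit} force $A$, $B$, $A+B$ to be FM-exponents, so the candidates are exactly the pairs of Theorem \ref{FMpairs}, and one then decides each family. Your reductions are also sound as far as they go, and in fact reach further than you noticed: whenever $B$ (of the pair or of its reversed pair) is a power of $p$, the identity $(x-1)^{p^c}=x^{p^c}-1$ turns $f$ into a genuine binomial, so this disposes not only of $(2^a+1,2^a)_2$, $(\frac{2^a+1}{3},2^a)_2$, $(\frac{3^a+1}{2},3^a)_3$, but also of $(1,12)_2$ (reversed pair $(12,1)$ gives $x^{13}+x^{12}$, with $\{13_{\hat 2},12_{\hat 2}\}=\{13,3\}$), of $(1,\frac{2^a+1}{3})_2$ and $(1,\frac{3^a+1}{2})_3$ (reversed pairs give binomials of types $\frac{2^k+1}{3}$ and $\frac{3^k+1}{2}$), and of the sporadic pairs $(1,4)_3$, $(1,6)_3$, $(4,3)_3$, $(2,1)_5$; the char-$2$ diagonal families follow from Lemma \ref{switchsum} exactly as in the paper.

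The genuine gap is the verification direction for the surviving families that none of your reductions touches: $(2^a+1,\frac{2^a+1}{3})_2$ for odd $a\geq 3$ (including $(9,3)_2$), $(3^a+1,\frac{3^a+1}{2})_3$ for $a\geq 1$, and $(2,2)_3$. For these, neither entry of the pair nor of its reversed pair is a $p$-power (so no binomial appears: the expansion of $x^A(x-1)^B$ has three or four terms), and Lemma \ref{switchsum} is specific to characteristic $2$, so it does not apply to $(2,2)_3$. Your only proposed tool here is a direct proof that $W_p(A,B,x,y)\geq\frac32$ for \emph{all} nonzero $x,y\in(\QQ/\ZZ)_{\text{prime to }p}$ via an analysis of carry patterns, which you explicitly leave unexecuted; this is not a routine completion but the genuinely hard half of the problem (it is the kind of digit-sum estimate that occupies the bulk of \cite{KT24}). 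The paper avoids it entirely by a different idea, which is the one missing from your proposal: expand $f$ into its monomial terms, use Artin--Schreier reduction to replace each term $sx^{p^km}$ by $s^{1/p^k}x^{m}$, and recognize the resulting trace function as a \emph{specialization} of one of the multi-parameter families $\FF(d_1,\dots,d_k,1;\mathbf 1)$ whose finite monodromy is already established in \cite[Theorem 11.2.3]{KT24} (e.g.\ $\FF(\frac{2^{a+2}+1}{3},\frac{2^a+1}{3},\frac{2^{a-2}+1}{3},1;\mathbf 1)$ for $(\frac{2^a+1}{3},2^a+1)_2$, $\FF(\frac{3^{a+1}+1}{2},\frac{3^a+1}{2},\frac{3^{a-1}+1}{2},1;\mathbf 1)$ for $(\frac{3^a+1}{2},3^a+1)_3$, and $\FF(4,2,1;\mathbf 1)$ for $(2,2)_3$); since algebraic integrality of the traces passes to specializations, Proposition \ref{criterion-algint} then gives finiteness. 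Without this step (or a full carry analysis replacing it), your argument cannot conclude for these three families, so the proof is incomplete.
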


\begin{proof} For each item from Theorem \ref{FMpairs} we either prove the corresponding family has finite monodromy or give a counterexample for the $V$-criterion Theorem \ref{Belyicrit}. Write $W_p(d,e,x,y):=V_p(x)+V_p(y)+V_p(y-(d+e)x)+V_p(ex-y)+V_p(-ex)$ for $d,e\in\mathbb Z$.

\begin{enumerate}
\item $p\geq2$, $(\frac{p^a+1}{p^b+1},p^b\frac{p^a+1}{p^b+1})$ for $b\geq 1$ and $a\geq b$ an odd multiple of $b$. Observe that $\frac{p^a+1}{p^b+1}+p^b\frac{p^a+1}{p^b+1}=p^a+1$.

    If $a=b$ we have the pair $(1,p^a)$ with finite monodromy. Indeed, the trace function of $\mathcal{F}_f$ with $f(x)=x^{p^a+1}-x$ is: \[(k_r;s,t) \mapsto -\frac{1}{q^{r/2}}\sum_{x\in k_r}\psi_r(sx^{p^a+1}+(t-s)x),\] 
    which is an algebraic integer since $p^a+1$ is an FM-exponent. Observe that this argument works for $a=0$ as well.

    Assume $a\geq 3b$ and consider $(x,y)=\left(\frac{1}{p^a-1},\frac{2}{p^a-1}\right)$. Then
    $$
    V_p(x)+V_p(y)+V_p(y-(p^a+1)x)+V_p\left(p^b\frac{p^a+1}{p^b+1}x-y\right)+V_p\left(-p^b\frac{p^a+1}{p^b+1}x\right)=
    $$
    $$
    =\frac{1}{a(p-1)}+V_p\left(\frac{2}{p^a-1}\right)+1+\frac{b}{a}-\frac{2}{a(p-1)}
    $$
    by lemma \ref{vfunction}. If $p=2$ this is $1+\frac{b}{a}\leq \frac{4}{3}<\frac{3}{2}$. If $p>2$ it is $1+\frac{b}{a}+\frac{1}{a(p-1)}\leq \frac{4}{3}+\frac 1{a(p-1)}<\frac{3}{2}$ unless $a(p-1)\leq 6$, which can only happen for $a=p=3$, in which case we have $W_3(7,21,\frac{1}{8},\frac{1}{2})=\frac{5}{4}<\frac{3}{2}$.
    

\item $p\geq2$, $(\frac{p^{a+2b}+1}{p^b+1},p^b\frac{p^a+1}{p^b+1})$. Observe that $d+e=p^{a+b}+1$.
    
    If $a=b$ then $d=\frac{p^{3b}+1}{p^b+1}$, $e=p^b$ and $d+e=p^{2b}+1$. Consider $(x,y)=(\frac{p^b-1}{p^{3b}-1},p^b\frac{1}{p^{3b}-1})$. Then \[\begin{aligned}V_p(x)&+V_p(y)+V_p(y-(p^{2b}+1)x)+V_p(p^bx-y)+V_p(-p^bx)=\\ &=1+\frac{1}{3b(p-1)}+V_p\left(\frac{p^{2b}}{p^{3b}-1}\right)+V_p\left(\frac{p^b-2}{p^{3b}-1}\right)=\frac{4}{3}+\frac{1}{3b(p-1)}.\end{aligned}\] This is $<3/2$ unless $b(p-1)\leq 2$, which can only happen for $(p,b)=(2,1),(2,2)$ or $(3,1)$. For $p=2$ and $a=b=1$ we obtain the pair $(3,2)_2$ with finite monodromy. Indeed, $f(x)=x^5+x^3$ and the trace function of $\FF_f$ is \[(k_r; s,t) \mapsto -\frac{1}{q^{r/2}}\sum_{x\in k_r}\psi_r(sx^5+sx^3+tx),\]
    which is a specialization of the finite monodromy family $\FF(5,3,1;\mathbf 1)$ in \cite[Theorem 11.2.3(ii)]{KT24}. For the other two pairs, $W_2(13,4,\frac{19}{255},\frac{4}{15})=W_3(7,3,\frac{11}{80},\frac{3}{8})=\frac{11}{8}<\frac{3}{2}$. 
    
    Now assume $a\geq 3b$ and consider $(x,y)=(\frac{1}{p^{a+b}-1},\frac{2}{p^{a+b}-1})$. Then \[\begin{aligned}V_p(x)&+V_p(y)+V_p(y-(p^{a+b}+1)x)+V_p(ex-y)+V_p(-ex)=\\ &= 1+\frac{b}{a+b}-\frac{1}{(a+b)(p-1)}+V_p\left(\frac{2}{p^{a+b}-1}\right)\end{aligned}\] by lemma \ref{vfunction}. Since $a\geq 3b,$ for $p=2$ this is $=1+\frac{b}{a+b}\leq \frac{5}{4}<\frac{3}{2}$ and for $p>2$ this is $=1+\frac{b}{a+b}+\frac{1}{(a+b)(p-1)}\leq\frac{5}{4}+\frac{1}{4b(p-1)}<\frac{3}{2}$.

\item $p\geq3$, $(\frac{p^a+1}{2},\frac{p^a+1}{2})$ for $a\geq0.$ Observe that $d+e=p^a+1$.

    If $a=0$ we already know these pairs have finite monodromy.

    Assume from now on $a\geq 1$. Consider $(x,y)=(\frac{1}{p^a-1},\frac{2}{p^a-1})$. Then, \[\begin{aligned}V_p(x)&+V_p(y)+V_p(y-(p^a+1)x)+V_p(ex-y)+V_p(-ex)=\\ &=\frac{1}{a(p-1)}+\frac{2}{a(p-1)}+0+\frac{1}{2}-\frac{1}{a(p-1)}+\frac{1}{2}-\frac{1}{a(p-1)}=1+\frac{1}{a(p-1)}.\end{aligned}\] This is $<\frac{3}{2}$ unless $a(p-1)\leq 2$, which is only possible for $p=3,a=1$. For $p=3$ and $a=1$ we find the pair $(2,2)_3$ and it gives finite monodromy. The associated polynomial is $f(x)=(x^2-x)^2=x^4+x^3+x^2$ and the trace function of $\mathcal{F}_f$ takes the form:
    
    \[(k_r;s,t) \mapsto -\frac{1}{q^{r/2}}\sum_{x\in k_r}\psi_r(sx^4+sx^3+sx^2+tx) = \]
    \[=-\frac{1}{q^{r/2}}\sum_{x\in k_r}\psi_r(sx^4+sx^2+(t+s^{1/3})x),\] 
    which is a specialization of the trace function of local system  $\FF(4,2,1;\mathbf 1)$ from \cite[Theorem 11.2.3(i-bis)]{KT24}.

\item Observe that $(5,17)\equiv(5,52)\equiv(33,10)\equiv(33,24)\equiv(5,3)\pmod{2^3-1}$ and $(17,26)\equiv(17,40)\equiv(3,5)\pmod{2^3-1}$. Taking $x=y=\frac{1}{7}$ we get $W(5,3,\frac{1}{7},\frac{1}{7})=\frac{4}{3}<\frac{3}{2}$.

For the remaining pairs, we get $W_2(5,6,\frac{1}{7},\frac{4}{7})=W_2(171,34,\frac{1}{7},\frac{2}{7})=\frac{4}{3}$; \newline $W_2(3,10,\frac{3}{31},\frac{8}{31})=W_2(5,8,\frac{3}{31},\frac{8}{31})=W_2(9,4,\frac{3}{31},\frac{8}{31})=W_2(9,2,\frac{3}{31},\frac{2}{31})=W_2(33,172,\frac{5}{31},\frac{2}{31})=\frac{7}{5}$; \newline
$W_2(9,11,\frac{5}{63},\frac{37}{63})=W_2(9,13,\frac{3}{63},\frac{3}{63})=W_2(9,34,\frac{3}{63},\frac{3}{63})=W_2(11,2,\frac{5}{63},\frac{2}{63})=\frac{4}{3}$; \newline
$W_2(9,17,\frac{3}{31},\frac{16}{31})=W_2(9,48,\frac{3}{31},\frac{16}{31})=W_2(9,43,\frac{3}{31},\frac{1}{31})=W_2(205,36,\frac{1}{31},\frac{1}{31})=\frac{7}{5}$; \newline
$W_2(11,13,\frac{1}{15},\frac{9}{15})=W_2(11,57,\frac{1}{15},\frac{8}{15})=W_2(13,44,\frac{1}{15},\frac{12}{15})=W_2(13,228,\frac{1}{15},\frac{1}{15})=W_2(33,208,\frac{1}{15},\frac{1}{15})=W_2(65,176,\frac{1}{15},\frac{1}{15})=\frac{5}{4}$.

\item[(5-6)] $p=2$, $(a,a)$ for $a$ an arbitrary FM-exponent. These pairs have finite monodromy: here $f(x)=x^a(x+1)^a$, and the trace function of $\FF_f$ is
\[
(k_r;s,t)\mapsto -\frac{1}{q^{r/2}}\sum_{x\in k_r}\psi_r(sx^a(x+1)^a+tx)=
\]
\[
=-\frac{1}{q^{r/2}}\sum_{y\in k_r}\psi_r(sy^a)\sum_{x^2+x=y}\psi_r(tx)=-\frac{1}{q^{r/2}}\sum_{y\in k_r}\psi_r(sy^a)\sum_{u^2+u=t^2}\psi_r(uy)=
\]
\[
-\frac{1}{q^{r/2}}\sum_{u^2+u=t^2}\sum_{y\in k_r}\psi_r(sy^a+uy)
\]
by lemma \ref{switchsum}. Since $a$ is an FM-exponent, each $-\frac{1}{q^{r/2}}\sum_{y\in k_r}\psi_r(sy^a+uy)$ term is an algebraic integer.

\setcounter{enumi}{6}

\item $p=2$, $(1,2^a+1)$ for $a\geq1$. These pairs have finite monodromy. Indeed, we work with the reversed pair $(d,e)=(2^a+1,1)$ so $f(x)=x^{2(2^{a-1}+1)}+x^{2^a+1}$ and the trace function of $\mathcal{F}_f$ is 
    
    \[(k_r;s,t) \mapsto -\frac{1}{q^{r/2}}\sum_{x\in k_r}\psi_r(sx^{2(2^{a-1}+1)}+sx^{2^a+1}+tx)=\] \[=-\frac{1}{q^{r/2}}\sum_{x\in k_r}\psi_r(sx^{2^a+1}+s^{1/2}x^{2^{a-1}+1}+tx)\]
    which is a specialization of the trace function of the sheaf $\FF(2^{a+1}+1,2^a+1,1;\mathbf 1)$ in \cite[Theorem 11.2.3(ii)]{KT24}.
    
\item $p=2$, $(2^a+1,2^a)$ for $a\geq1$. These pairs have finite monodromy. Indeed, $f(x)=x^{2^{a+1}+1}+x^{2^a+1}$ and the trace function of $\mathcal{F}_f$ is \[(k_r;s,t)\mapsto -\frac{1}{q^{r/2}}\sum_{x\in k_r} \psi_r(sx^{2^{a+1}+1}+sx^{2^a+1}+tx),\]
    which is a specialization of the finite monodromy family $\FF(2^{a+1}+1,2^a+1,1;\mathbf 1)$ in \cite[Theorem 11.2.3(ii)]{KT24}.
    
\item $p=2$, $(3,2^a+1)$ for $a\geq 1$. Assume $a\geq 1$ is odd. If $a=1$
    the pair has finite monodromy by \#5. If $a=3$ we also obtain a pair $(3,9)_2$ with finite monodromy since $f(x)=x^3(x+1)(x+1)^8=x^{12}+x^{11}+x^4+x^3$ and the trace function of $\mathcal{F}_f$ is \[(k_r;s,t) \mapsto -\frac{1}{q^{r/2}}\sum_{x\in k_r}\psi_r(sx^{11}+(s+s^{1/4})x^3+(t+s^{1/4})x),\] which is a specialization of the finite monodromy family $\FF(11,3,1;\mathbf 1)$ in \cite[Theorem 11.2.3(iii)]{KT24}.

    For $a\geq 5$, we show that the pair $(2^a+1,3)$ does not have finite monodromy. To this end, consider $x=y=\frac{2^{3\lfloor\frac{a-2}{3}\rfloor}-1}{(2^3-1)(2^{a-2}-1)}=\frac{\sum_{k=0}^{\lfloor\frac{a-2}{3}\rfloor-1}2^{3k}}{2^{a-2}-1}.$ Observe that $3x=x+2x$ and there are no carries while adding the numerators of $x$ and $2x$, so $V_2(3x)=V_2(x)+V_2(2x)$. Then 
    
    \[\begin{aligned}V_2(x)&+V_2(x)+V_2((1-(2^a+4))x)+V_2(2x)+V_2(-3x)=\\ & V_2\left(-\frac{2^{3\left\lfloor\frac{a-2}{3}\right\rfloor}-1}{2^3-1}\frac{2^a+2+1}{2^{a-2}-1}\right)+1+\frac{\left\lfloor\frac{a-2}{3}\right\rfloor}{a-2}.\end{aligned}\] 
    
    Observe that $V_2\left(-\frac{2^{3\left\lfloor\frac{a-2}{3}\right\rfloor}-1}{2^3-1}\frac{2^a+2+1}{2^{a-2}-1}\right)=V_2\left(-\frac{2^{3\left\lfloor\frac{a-2}{3}\right\rfloor}-1}{2^{a-2}-1}\right)=1-\frac{3\left\lfloor\frac{a-2}{3}\right\rfloor}{a-2}$, 
    so $W_2(d,e,x,y)=2-\frac{2\left\lfloor\frac{a-2}{3}\right\rfloor}{a-2}$ and this is $<3/2$ unless $a=7$. For $a=7$, we get $W_2(129,3,\frac{5}{31},\frac{9}{31})=\frac{7}{5}<\frac{3}{2}$.

    Now assume $a\geq2$ is even. We show the pair $(2^a+1,3)$ does not give finite monodromy. Let $(x,y)= (\frac{2^a-1}{3(2^{a+1}-1)},\frac{2^{a-1}-1}{2^{a+1}-1})=(\frac{\sum_{k=0}^{a/2-1}2^{2k}}{2^{a+1}-1},\frac{\sum_{k=0}^{a-2}2^k}{2^{a+1}-1})$. Then \[\begin{aligned}&V_2(x)+V_2(y)+V_2(y-(d+e)x)+V_2(ex-y)+V_2(-ex)=\\ &= \frac{3a+2}{2(a+1)}+V_2\bigg(-(2^a+4)\frac{\sum_{k=0}^{a/2-1}2^{2k}}{2^{a+1}-1}+\frac{\sum_{k=0}^{a-2}2^k}{2^{a+1}-1}\bigg)=\frac{3a+2}{2(a+1)}< \frac{3}{2},\end{aligned}\] because $(2^a+4)\sum_{k=0}^{a/2-1}2^{2k}-\sum_{k=0}^{a-2}2^k=\sum_{k=0}^{a/2-1}2^{2(k+a/2)}+\sum_{k=1}^{a/2}2^{2k}-\sum_{k=0}^{a/2-1}2^{2k}-\sum_{k=1}^{a/2-1}2^{2k-1}=\sum_{k=1}^{a/2-1}2^{2(k+a/2)}-\sum_{k=1}^{a/2-1}2^{2k-1}+2^{a+1}-1\equiv 0\pmod{2^{a+1}-1}.$

\item $p=2$, $(2^a+1,3\cdot2^a)$ for $a\geq1$. Assume $a\geq 2$ is even. For $a=2$, $W_2(5,12,\frac{19}{127},\frac{69}{127})=\frac{10}{7}<\frac{3}{2}$. For $a\geq 4$ consider $(x,y)=(\frac{2^{a-2}-1}{3(2^{a-1}-1)},2\frac{2^{a-3}-1}{2^{a-1}-1})=(\frac{\sum_{k=0}^{a/2-2}2^{2k}}{2^{a-1}-1},2\frac{\sum_{k=0}^{a-4}2^k}{2^{a-1}-1})$. Observe that $(2^{a-1}-1)(y-(d+e)x)=2(2^{a-3}-1)-(2^{a+2}+1)\sum_{k=0}^{a/2-2}2^{2k}\equiv2^{a-2}-2-(2^3+1)\sum_{k=0}^{a/2-2}2^{2k}=2^{a-2}-2^{a-1}-\sum_{k=0}^{a-3}2^k=-2^{a-1}+1\equiv 0 \pmod{2^{a-1}-1}.$ Then
\[\begin{aligned}V_2(&x)+V_2(y)+V_2(y-(2^{a+2}+1)x)+V_2(3\cdot2^a\cdot x-y)+V_2(-3x)=\\ &=\frac{a-2}{2(a-1)}+\frac{a-3}{a-1}+0+\frac{1}{a-1}+\frac{1}{a-1}=\frac{3a-4}{2(a-1)}=\frac{3}{2}-\frac{1}{2(a-1)}<\frac{3}{2}.\end{aligned}\]

Since $(2^a+1,3\cdot2^a)\equiv(3,6)\pmod{2^{a-1}-1}=(2^a+1,3\cdot 2^a)\rvert_{a=1}$ we see that the previous argument also shows that for $a=1$ the pair does not give finite monodromy.

    Assume now $a\geq 3$ is odd. Take $x=\frac{2^{a+1}-1}{3(2^{a+2}-1)}=\frac{\sum_{k=0}^{\frac{a-1}{2}}2^{2k}}{2^{a+2}-1},y=2x$. Since $3\cdot2^a\cdot\frac{2^{a+1}-1}{3}-2\frac{2^{a+1}-1}{3}\equiv-\left(2^{a-1}+\sum_{k=0}^{(a-1)/2}2^{2k+1}\right)\pmod{2^{a+2}-1},$ we get
    \[\begin{aligned}2V_2(x)&+V_2(x(1-2^{a+2}))+V_2(ex-2x)+V_2(-ex)=\\ &=\frac{a+1}{a+2}+0+\frac{a+1}{2(a+2)}+\frac{1}{a+2}=\frac{3a+5}{2(a+2)}<\frac{3}{2}.\end{aligned}\]

\item $p=2$, $(2^a+1,\frac{2^{3a}+1}{2^a+1})$ for $a\geq 1.$ Consider the reversed pair $(d,e)=(\frac{2^{3a}+1}{2^a+1},2^a+1)$ and observe that $d+e=2(2^{2a-1}+1)$.

    For $a=1$ the pair has finite monodromy by \#5.

    For $a\geq 2$ take $(x,y)=(\frac{3}{2^{2a}-1},\frac{9}{2^{2a}-1})$. Since $9-(d+e)3=9-(2^{2a}+2)3\equiv0\pmod{2^{2a}-1}$ and $ex-y=3(2^a+1)-9=2\cdot3(2^{a-1}-1)=2(1+\sum_{k=2}^{a-2}2^k+2^a)$ it follows that 
    \[\begin{aligned}V_2(x)&+V_2(y)+V_2(y-(2^{2a}+2)x)+V_2(ex-y)+V_2(-ex)\\ &= \frac{1}{a}+\frac{1}{a}+0+\frac{a-1}{2a}+\frac{a-2}{a}= 1+\frac{a-1}{2a}=\frac{3a-1}{2a}<\frac{3}{2}.\end{aligned}\]

\item $p=2$, $(\frac{2^{3a}+1}{2^a+1},2^a(2^a+1))$ for $a\geq 1$. Observe that $e=2^{2a}+2^a\equiv2^{a}+1\pmod{2^{2a}-1}$ so we conclude using the previous item for $a\geq 2$.
    
    For $a=1$, the pair $(3,6)$ does not have finite monodromy by \#10.

\item $p=2$, $(2^a+1,\frac{2^a+1}{3})$ for odd $a\geq 1$. For $a=1$ the pair has finite monodromy by \#7.
    
    Now we show that the reversed pair $(d,e)=(\frac{2^a+1}{3},2^a+1)$ has finite monodromy for $a\geq 3$. Indeed, $f(x)=x^d(x+1)^e=x^{2^2\frac{2^a+1}{3}}+x^{\frac{2^{a+2}+1}{3}}+x^{2^2\frac{2^{a-2}+1}{3}}+x^{\frac{2^a+1}{3}}$ and the trace function of $\mathcal{F}_f$ is as follows: \[(k_r;s,t) \mapsto -\frac{1}{q^{r/2}}\sum_{x\in k_r} \psi_r(sx^\frac{2^{a+2}+1}{3}+(s+s^{1/4})x^\frac{2^a+1}{3}+s^{1/4}x^\frac{2^{a-2}+1}{3}+tx),\] which is a specialization of the finite monodromy family $\FF(\frac{2^{a+2}+1}{3},\frac{2^a+1}{3},\frac{2^{a-2}+1}{3},1;\mathbf 1)$ by \cite[Theorem 11.2.3(iii)]{KT24}.

\item $p=2$, $(1,\frac{2^a+1}{3})$ for odd $a\geq1$. For $a=1,3$ we have already seen that the pair has finite monodromy. The remaining values $a\geq 5$ have finite monodromy as well since the 
    polynomial of the reversed pair $(d,e)=(\frac{2^a+1}{3},1)$ is $f(x)=x^{2^2\frac{2^{a-2}+1}{3}}+x^\frac{2^a+1}{3}$ and the trace function of $\mathcal{F}_f$ is given by 
    
    \[(k_r;s,t)\mapsto -\frac{1}{q^{r/2}}\sum_{x\in k_r}\psi_r(sx^\frac{2^a+1}{3}+s^{1/4}x^\frac{2^{a-2}+1}{3}+tx),\] which is a specialization of the finite monodromy family $\FF(\frac{2^a+1}{3},\frac{2^{a-2}+1}{3},1;\mathbf 1)$ in \cite[Theorem 11.2.3(iii)]{KT24}.

\item $p=2$, $(\frac{2^a+1}{3},2^a)$ for $a\geq 1$ odd. These pairs have finite monodromy. Indeed, 
    $f(x)=x^\frac{2^{a+2}+1}{3}+x^\frac{2^a+1}{3}$ and the trace function of $\mathcal{F}_f$ is \[(k_r;s,t)\mapsto -\frac{1}{q^{r/2}}\sum_{x\in k_r}\psi_r(sx^\frac{2^{a+2}+1}{3}+sx^\frac{2^a+1}{3}+tx),\]  which is a specialization of the finite monodromy family $\FF(\frac{2^{a+2}+1}{3},\frac{2^{a}+1}{3},1;\mathbf 1)$ in \cite[Theorem 11.2.3(iii)]{KT24}.
    
\item $p=2$, $(3,\frac{2^{2a}+1}{5})$ for $a\geq1$ odd. For $a=1$ the pair has finite monodromy by \#7.
    
    For $a\geq3$, consider $x=y=\frac{1}{2^{2a-1}-1}$. Observe that $(d+e)-1=e+2=1+2+(2^2-1)\sum_{k=1}^{\frac{a-1}{2}}2^{2(2k-1)}=1+2+(2+1)\sum_{k=1}^{\frac{a-1}{2}}2^{2(2k-1)}$. It follows that 
    \[\begin{aligned}2V_2(&x)+V_2((1-(d+e))x)+V_2((e-1)x)+V_2(-ex)\\ &=\frac{2}{2a-1}+\frac{a-2}{2a-1}+\frac{a-1}{2a-1}+\frac{a-1}{2a-1}=\frac{3a-2}{2a-1}<\frac{3}{2}.\end{aligned}\]

\item $p=2$, $(\frac{2^{2a}+1}{5},3\cdot2^{2a})$ for $a\geq1$ odd. For $a=1$ the pair $(1,12)_2$ has finite monodromy. Indeed, we work with the reversed pair $(d,e)=(12,1)$ whose associated polynomial is $f(x)=x^{13}+x^{12}$ and the trace function of $\mathcal{F}_f$ is \[(k_r;s,t)\mapsto-\frac{1}{q^{r/2}}\sum_{x\in k_r}\psi_r(sx^{13}+s^{1/4}x^3+tx),\]  which is a specialization of the finite monodromy family $\FF(13,3,1;\mathbf 1)$ in \cite[Theorem 11.2.3(iii)]{KT24}.

    For $a\geq3$ consider $(x,y)=(\frac{9}{2^{2a+1}-1},\frac{4}{2^{2a+1}-1})$. Observe that $e\equiv 2^{2a}+1\pmod{2^{2a+1}-1}$, so $d+e\equiv6d\pmod{2^{2a+1}-1}$ and $9(d+e)-4\equiv2(5^2d+2d-2)=2(5\cdot 2^a+5+\sum_{k=1}^{\frac{a-1}{2}}(2^{4k-1}+2^{4k}))\equiv 1+2+2^2+2^3+\sum_{k=1}^{\frac{a-1}{2}}(2^{4k}+2^{4k+1})\pmod {2^{2a+1}-1}$, so $V_2(y-(d+e)x)=1-V_2((d+e)x-y)=1-\frac{a+3}{2a+1}=\frac{a-2}{2a+1}$. Also, $9e-4\equiv 2^{2a}+2^3+1\pmod {2^{2a+1}-1}$, so
    \[\begin{aligned}V_2(&x)+V_2(y)+V_2(y-(d+e)x)+V_2(ex-y)+V_2(-ex)=\\ &=\frac{2}{2a+1}+\frac{1}{2a+1}+\frac{a-2}{2a+1}+\frac{3}{2a+1}+\frac{2a-3}{2a+1}=\frac{3a+1}{2a+1}<\frac{3}{2}.\end{aligned}\]

\item $p=2$, $(5,\frac{2^a+1}{3})$ for odd $a\geq1$. For $a=1$ the pair has finite monodromy by \#7.
    
    For odd $a\geq3$ consider $x=y=\frac{1}{2^a-1}$. Since there are no carries while adding the binary expansions of $e$ and $4$, it follows that
    \[\begin{aligned}2V_2(&x)+V_2(x-(d+e)x)+V_2(ex-x)+V_2(-ex)\\ &= 2\frac{1}{a}+\frac{a-3}{2a}+\frac{a-1}{2a}+\frac{a-1}{2a}=\frac{3a-1}{2a}<\frac{3}{2}.\end{aligned}\]

\item $p=2$, $(\frac{2^a+1}{3},5\cdot2^a)$ for $a\geq1$ odd. For $a=1$, we have $W_2(1,10,\frac{3}{31},\frac{2}{31})=\frac{7}{5}<\frac{3}{2}$. For $a\geq3$ odd, consider $x=y=\frac{1}{2^a-1}$ and reduce to the previous item, since $5\cdot 2^a\equiv 5\pmod {2^a-1}$.

\item $p=2$, $(\frac{2^{3a}+1}{9},\frac{2^{3a}+1}{3})$ for odd $a\geq1$. For $a=1$ the pair has finite monodromy by \#7.
    
    For $a\geq 3$ consider $(x,y)=(\frac{1}{2^{3a-1}-1},\frac{5}{2^{3a-1}-1})$. Observe that $e=3d$ so $d+e-5=4d-5=4(2^2+2+1)\sum_{k=1}^{(a-1)/2}2^{3(2k-1)}-1$. Since $4(2^2+2+1)\sum_{k=1}^{(a-1)/2}2^{3(2k-1)}$ has $3(a-1)/2$ 2-adic digits equal to 1 and ends in 5 zeros, the number of 1's in the 2-adic expansion of $d+e-5$ is $(3a+5)/2$, so $V_2(y-(d+e)x)=1-V_2((d+e)x-y)=1-\frac{3a+5}{2(3a-1)}=\frac{3a-7}{2(3a-1)}$.
    
    Also, $e-5=\sum_{k=1}^{\frac{3a-1}{2}}2^{2k-1}+1-5= 2+2^2+\sum_{k=3}^{\frac{3a-1}{2}}2^{2k-1}$. Then \[\begin{aligned}V_2(&x)+V_2(y)+V_2(y-(d+e)x)+V_2(ex-y)+V_2(-ex)=\\
     &=\frac{1}{3a-1}+\frac{2}{3a-1}+\frac{3a-7}{2(3a-1)}+\frac{3a-1}{2(3a-1)}+\frac{3a-3}{2(3a-1)}=\frac{9a-5}{2(3a-1)}<\frac{3}{2}.\end{aligned}\]

     

\item $p=2$, $(\frac{2^{3a}+1}{9},2\frac{2^{3a}+1}{9})$ for $a\geq1$ odd. For $a=1$ the pair has finite monodromy by \#1.
    
    For $a\geq3$ consider $(x,y)=(\frac{3}{2^{3a+1}-1},\frac{2}{2^{3a+1}-1})$. Observe that $e=2d$ so $3(d+e)-2=2^{3a}-1$, $3e=2(1+\sum_{k=1}^{(3a-1)/2}2^{2k-1})$ and $3e-2=2\sum_{k=1}^{(3a-1)/2}2^{2k-1}$. Therefore, \[\begin{aligned}V_2(&x)+V_2(y)+V_2(y-(d+e)x)+V_2(ex-y)+V_2(-ex)=\\ &= \frac{2}{3a+1}+\frac{1}{3a+1}+\frac{1}{3a+1}+\frac{3a-1}{2(3a+1)}+\frac{3a+1}{2(3a+1)}=\frac{3a+4}{3a+1}<\frac{3}{2}\end{aligned}\] since $a\geq3$.

\item The pair $(4,3)_3$ has finite monodromy because the corresponding 
polynomial is $f(x)=x^7-x^4$ and $\mathcal{F}_f$ has trace function \[(k_r;s,t)\mapsto-\frac{1}{q^{r/2}}\sum_{x\in k_r}\psi_r(sx^7-sx^4+tx),\]  which is a specialization of the finite monodromy family $\FF(7,4,1;\mathbf 1)$ in \cite[Theorem 11.2.3(vii)]{KT24}. For the remaining pairs, we have \newline
$W_3(5,7,\frac 18,\frac 12)=W_3(10,63,\frac 18,\frac 18)=W_3(28,45,\frac 18,\frac 18)=W_3(61,12,\frac 18,\frac 18)=\frac 54$;\newline
$W_3(2,5,\frac 4{26},\frac 2{26})=W_3(4,10,\frac 2{26},\frac 2{26})=\frac 43$

\item $p=3$, $(2,3^a+1)$ for $a\geq0$. For $a=0$, we have finite monodromy by \#3.

    For $a=1$, the pair $(2,4)_3$ has finite monodromy too since the 
    polynomial associated to the reversed pair $(d,e)=(4,2)$ is $f(x)=x^6+x^5+x^4$ and the trace function of $\mathcal{F}_f$ is \[(k_r;s,t) \mapsto -\frac{1}{q^{r/2}}\sum_{x\in k_r}\psi_r(sx^5+sx^4+s^{1/3}x^2+tx),\]  which is a specialization of the finite monodromy family $\FF(5,4,2,1;\mathbf 1)$ in \cite[Theorem 11.2.3(viii)]{KT24}.

    For $a\geq 2$ we consider the reversed pair $(d,e)=(3^a+1,2)$ and $(x,y)=(\frac{\frac{3^a-1}{2}+3^{a-1}}{3^{2a}-1},\frac{3^a-1}{2(3^{2a}-1)})$. Since $\big(\frac{3^a-1}{2}+3^{a-1}\big)(d+e)-\frac{3^a-1}{2}=2\sum_{k=0}^a3^k+\sum_{k=a+1}^{2a-2}3^k+2\cdot3^{2a-1}$, $e\big(\frac{3^a-1}{2}+3^{a-1}\big)-\frac{3^a-1}{2}=\sum_{k=0}^{a-2}3^k+3^a$ and $e\big(\frac{3^a-1}{2}+3^{a-1}\big)=2\sum_{k=0}^{a-2}3^k+3^{a-1}+3^a$, we get 
    \[\begin{aligned}V_3(&x)+V_3(y)+V_3(y-(d+e)x)+V_3(ex-y)+V_3(-ex)=\\ &=\frac{a+1}{4a}+\frac{1}{4}+\frac{a-2}{4a}+\frac{1}{4}+\frac{1}{2}=\frac{6a-1}{4a}<\frac{3}{2}.\end{aligned}\] 
    
\item $p=3$, $(3^a+1,2\cdot3^a)$ for $a\geq 0$. For $a=0$ the pair has finite monodromy by \#3. For $a=1$, $W_3(4,6,\frac{11}{80},\frac{3}{8})=\frac{11}{8}<\frac 32$.

   For $a\geq 2$, multiplying by $3^a$ we get $(3^a+3^{2a},2\cdot 3^{2a})\equiv(3^a+1,2)\pmod {3^{2a}-1}$ and we reduce to the previous case by rescaling $x$ and $y$.

\item $p=3$, $(3^a+1,\frac{3^a+1}{2})$ for $a\geq 0$. For $a=0$ the pair $(2,1)_3$ has finite monodromy. Indeed, the polynomial is $f(x)=x^3-x^2$ and the trace function of $\mathcal{F}_f$ is \[(k_r;s,t)\mapsto-\frac{1}{q^{r/2}}\sum_{x\in k_r}\psi_r(-sx^2+(t+s^{1/3})x),\] which is always an algebraic integer since 2 is an FM-exponent.


    The remaining pairs have finite monodromy too since, for $a\geq 1$, the reversed pairs have associated 
    polynomial $f(x)=x^{3\frac{3^a+1}{2}}-x^\frac{3^{a+1}+1}{2}-x^{3\frac{3^{a-1}+1}{2}}+x^\frac{3^a+1}{2}$ so the trace function of $\mathcal{F}_f$ is \[(k_r;s,t) \mapsto -\frac{1}{q^{r/2}}\sum_{x\in k_r}\psi_r(-sx^\frac{3^{a+1}+1}{2}+(s+s^{1/3})x^\frac{3^a+1}{2}-s^{1/3}x^\frac{3^{a-1}+1}{2}+tx),\]  which is a specialization of the finite monodromy family $\FF(\frac{3^{a+1}+1}{2},\frac{3^a+1}{2},\frac{3^{a-1}+1}{2},1;\mathbf 1)$ in \cite[Theorem 11.2.3(i)]{KT24}.

\item $p=3$, $(1,\frac{3^a+1}{2})$ for $a\geq 0$. If $a=0$ or $1$ we have finite monodromy by \#1 and \#25. For $a\geq 2$, we show that all the reversed pairs have finite monodromy.

The corresponding polynomial is $f(x)=x^{3\frac{3^{a-1}+1}{2}}-x^\frac{3^a+1}{2}$ and the trace function of $\mathcal{F}_f$ is \[(k_r;s,t) \mapsto-\frac{1}{q^{r/2}}\sum_{x\in k_r}\psi_r(-sx^\frac{3^a+1}{2}+s^{1/3}x^\frac{3^{a-1}+1}{2}+tx),\] which is a specialization of the finite monodromy family $\FF(\frac{3^a+1}{2},\frac{3^{a-1}+1}{2},1;\mathbf 1)$ in \cite[Theorem 11.2.3(i)]{KT24}.

\item $p=3$, $(\frac{3^a+1}{2},3^a)$ for $a\geq0$. The associated polynomial is $f(x)=x^\frac{3^{a+1}+1}{2}-x^\frac{3^a+1}{2}$ and the local system $\mathcal{F}_f$ has trace function \[(k_r;s,t) \mapsto -\frac{1}{q^{r/2}}\sum_{x\in k_r}\psi_r(sx^\frac{3^{a+1}+1}{2}-sx^\frac{3^a+1}{2}+tx),\]  which is a specialization of the finite monodromy family $\FF(\frac{3^{a+1}+1}{2},\frac{3^a+1}{2},1;\mathbf 1)$ in \cite[Theorem 11.2.3(i)]{KT24}.

\item $p=3$, $(4,\frac{3^a+1}{2})$ for $a\geq 0$. For $a=0$ the pair $(4,1)_3$ has finite monodromy, the corresponding 
    polynomial being $f(x)=x^5-x^4$, so the trace function of $\mathcal{F}_f$ is \[(k_r;s,t) \mapsto -\frac{1}{\lvert k_r\rvert^\times}\sum_{x\in k_r}\psi_r(sx^5-sx^4+tx),\]  which is a specialization of the finite monodromy family $\FF(5,4,1;\mathbf 1)$ in \cite[Theorem 11.2.3(viii)]{KT24}. For $a=1$ the pair has finite monodromy by \#23.

    For $a\geq 2$ consider $x=y=\frac{1}{3^a-1}$. Since $y-(d+e)x=-\frac{2+2\cdot3+\sum_{k=2}^{a-1}3^k}{3^a-1}$, $ex-y=\frac{1}{2}$ and $ex=\frac{2+\sum_{k=1}^{a-1}3^k}{3^a-1}$ we get \[\begin{aligned}2V_3(&x)+V_3(y-(d+e)x)+V_3(ex-y)+V_3(-ex)=\\ &=\frac{1}{a}+\frac{a-2}{2a}+\frac{1}{2}+\frac{a-1}{2a}=\frac{3a-1}{2a}<\frac{3}{2}.\end{aligned}\]

\item $p=3$, $(\frac{3^a+1}{2},4\cdot3^a)$ for $a\geq0$. For $a=0$ the pair has finite monodromy by \#28. For $a=1$ we have $W_3(2,12,\frac{2}{13},\frac{2}{13})=\frac 43$. For $a\geq 2$, since $4\cdot 3^a\equiv4\pmod{3^a-1}$, we reduce to the previous item.
    
\item $p=3$, $(\frac{3^a+1}{2},\frac{3^a+1}{4})$ for odd $a\geq 1$. For $a=1$ the pair has finite monodromy by \#25.

    For $a\geq 3$ odd consider $x=\frac{1}{3^a-1},y=\frac4{3^a-1}$. Observe that $d=2e$ so $y-(d+e)x=-\frac{2+2\cdot3+3^2+2\sum_{k=2}^{\frac{a-1}{2}}3^{2k}}{3^a-1}$. Also, $ex-y=\frac{3+2\sum_{k=2}^{\frac{a-1}{2}}3^{2k-1}}{3^a-1}$. Then 
    
    \[\begin{aligned}V_3(&x)+V_3(y)+V_3(y-(d+e)x)+V_3(ex-y)+V_3(-ex)=\\ &=\frac{1}{2a}+\frac{1}{a}+\frac{a-2}{2a}+\frac{a-2}{2a}+\frac{1}{2}=\frac{3a-1}{2a}<\frac{3}{2}.\end{aligned}\]

\item $p=3$, $(\frac{3^a+1}{4},\frac{3^a+1}{4})$ for odd $a\geq1$. For $a=1$ the pair has finite monodromy by \#1.

    For odd $a\geq3$ let $x=y=\frac{2}{3^a-1}$, then $y-(d+e)x=-1$ and $ex-y=\frac{\sum_{k=1}^{a-1}3^k}{3^a-1}$, so \[\begin{aligned}2V_3(&x)+V_3(y-(d+e)x)+V_3(ex-y)+V_3(-ex)=\frac{2}{a}+0+\frac{a-1}{2a}+\frac{a-1}{2a}=\frac{a+1}{a}<\frac{3}{2}.\end{aligned}\]

\item $p=3$, $(2,\frac{3^a+1}{4})$ for $a\geq1$ odd. For $a=1$ the pair has finite monodromy by \#25.

    For odd $a\geq3$ let $x=y=\frac{1}{3^{a-1}-1}$. Since $y-(d+e)x=-\frac{2+2\sum_{k=1}^{\frac{a-1}{2}}3^{2k-1}}{3^{a-1}-1}$ and $ex-y=\frac{2\sum_{k=1}^{\frac{a-1}{2}}3^{2k-1}}{3^{a-1}-1}$ we get \[\begin{aligned}2V_3(&x)+V_3(y-(d+e)x)+V_3(ex-y)+V_3(-ex)=\\ &=\frac{1}{a-1}+\frac{a-3}{2(a-1)}+\frac{1}{2}+\frac{a-2}{2(a-1)}=\frac{3a-4}{2(a-1)}<\frac{3}{2}.\end{aligned}\] 

\item $p=3$, $(\frac{3^a+1}{4},2\cdot3^a)$ for $a\geq 1$ odd. For $a=1$ we get the pair $(1,6)_3$ with finite monodromy. Indeed, the polynomial associated to the reversed pair is $f(x)=x^7-x^6$ and the trace function of $\mathcal{F}_f$ is \[(k_r;s,t)\mapsto -\frac{1}{q^{r/2}}\sum_{x\in k_r}\psi_r(sx^7 -s^{1/3}x^2+tx),\]  which is a specialization of the finite monodromy family $\FF(7,2,1;\mathbf 1)$ in \cite[Theorem 11.2.3(vii)]{KT24}.

    For odd $a\geq3$ consider $(x,y)=(\frac{7}{3^{a+1}-1},\frac{3^a}{3^{a+1}-1})$. Since $7(d+e)-3^a=3^{a+2}+1-3^a+3(1+\sum_{k=1}^{\frac{a+1}{2}}2\cdot 3^{2k-1})\equiv 3^2-3^a+\sum_{k=1}^{\frac{a-1}{2}}2\cdot 3^{2k}\equiv 2+2\cdot3+2\sum_{k=1}^{\frac{a-1}{2}}3^{2k}+2\cdot3^a \pmod {3^{a+1}-1}$ and $7\cdot 2\cdot 3^a-3^a=13\cdot 3^a\equiv 3^a+3+1\pmod {3^{a+1}-1}$ it follows that \[\begin{aligned}V_3(&x)+V_3(y)+V_3(y-(d+e)x)+V_3(ex-y)+V_3(-ex)=\\ &=\frac{3}{2(a+1)}+\frac{1}{2(a+1)}+\frac{a-3}{2(a+1)}+\frac{3}{2(a+1)}+\frac{2a-2}{2(a+1)}=\frac{3a+2}{2(a+1)}<\frac{3}{2}.\end{aligned}\] 

\item $W_5(1,6,\frac 7{24},\frac 1{24})=W_5(2,5,\frac 7{24},\frac 1{24})=\frac{11}8<\frac 32$; \newline
$W_5(6,7,\frac 14,\frac 14)=W_5(6,15,\frac 14,\frac 14)=W_5(3,7,\frac 14,\frac 12)=\frac 54 < \frac 32$.

\item $p=5$, $(2,\frac{5^a+1}{2})$ for $a\geq0$. For $a=0$ we obtain the pair $(2,1)_5$ with finite monodromy: the associated polynomial is $f(x)=x^3-x^2$ and the trace function of $\mathcal{F}_f$ is \[(k_r;s,t)\mapsto -\frac{1}{q^{r/2}}\sum_{x\in k_r}\psi_r(sx^3-sx^2+tx),\]  which is a specialization of the finite monodromy family $\FF(3,2,1;\mathbf 1)$ in \cite[Theorem 11.2.3(ix)]{KT24}.

    For $a\geq 1$, we show that the pair $(\frac{5^a+1}{2},2)$ does not have finite monodromy. Consider $x=y=\frac{1}{5^a-1}$. Then $y-(d+e)x=-\frac{4+2\sum_{k=1}^{a-1}5^k}{5^a-1}$ and \[\begin{aligned}2V_5(&x)+V_5(y-(d+e)x)+V_5(ex-y)+V_5(-ex)=\\ &=\frac{2}{4a}+\frac{2a-2}{4a}+\frac{1}{4a}+\frac{4a-2}{2a}=\frac{6a-1}{4a}<\frac{3}{2}.\end{aligned}\]

\item $p=5$, $(\frac{5^a+1}{2},2\cdot5^a)$ for $a\geq 0$. For $a=0$ we get finite monodromy by \#35. For $a\geq 1$, since $2\cdot5^a\equiv2\pmod {5^a-1}$ we reduce to the previous case.

\item $p=7$, $(2,2).$ $W_7(2,2,\frac 13,\frac 13)=\frac 43<\frac 32$.

\end{enumerate}
\end{proof}

The following SageMath \cite{sagemath} code snippet has been used to compute the values of $W_p$:

\begin{Verbatim}[frame=single,commandchars=\\\{\}]
\textcolor{violet}{def} \textcolor{blue}{V}(p,x):
    den = x.denominator()
    num = x.numerator() % den
    \textcolor{violet}{if} p.divides(den):
        \textcolor{violet}{raise} Exception(\textcolor{red!75!black}{"denominator can't be a multiple of p"})
    r = Zmod(den)(p).multiplicative_order()
    s = \textcolor{blue!50!black}{sum}(ZZ(num*(p^r-\textcolor{green!30!black}{1})/den).digits(p))
    \textcolor{violet}{return} s/(r*(p-\textcolor{green!30!black}{1}))
\textcolor{violet}{def} \textcolor{blue}{W}(p,a,b,x,y):
    \textcolor{violet}{return} V(p,x)+V(p,y)+V(p,y-(a+b)*x)+V(p,b*x-y)+V(p,-b*x)
\end{Verbatim}


\printbibliography 

\end{document}